\documentclass[a4paper]{article}
\usepackage{geometry}
\usepackage{amsmath,amssymb,amsfonts,amsthm}
\usepackage{latexsym}
\usepackage{indentfirst}
\usepackage{graphicx}
\usepackage[colorlinks=true]{hyperref}
\usepackage{mathrsfs} 
\usepackage{chngcntr}
\usepackage{color}
\usepackage{tikz}

\newtheorem{Thm}{Theorem}[section]

\newtheorem{Lem}{Lemma}[section]
\newtheorem{Pro}{Proposition}[section]

\newtheorem{Rek}{Remark}[section]
\newtheorem{Def}{Definition}[section]

\numberwithin{equation}{section}

\begin{document}

\title{Spectral properties and bound states of the Dirac equation on periodic quantum graphs}
\author{
  Zhipeng Yang\thanks{Email:yangzhipeng326@163.com}\quad and\quad Ling Zhu\\
  \small Department of Mathematics, Yunnan Normal University, Kunming, China.\\
  \small Yunnan Key Laboratory of Modern Analytical Mathematics and Applications, Kunming, China.\\
}
\date{}
\maketitle

\begin{abstract}
We investigate nonlinear Dirac equations on a periodic quantum graph $\mathcal G$ and develop a variational approach to the existence and multiplicity of bound states. After introducing the Dirac operator on $\mathcal G$ with a $\mathbb Z^{d}$-periodic potential, we describe its spectral decomposition and work in the natural energy space. Under asymptotically linear or superquadratic assumptions on the nonlinearity, we establish the required linking geometry and a Cerami-type compactness property modulo $\mathbb Z^{d}$-translations. As a consequence, we prove the existence of at least one bound state and, when the nonlinearity is even, infinitely many geometrically distinct bound states.
\end{abstract}

\paragraph{Keywords.} Nonlinear Dirac equation, Quantum graph, Variational methods.

\paragraph{2010 AMS Subject Classification.} 35R02; 35Q41; 81Q35.

\section{Introduction}
We are concerned with the existence and multiplicity of stationary states for the nonlinear Dirac equation (NLDE)
\begin{equation}\label{eq-1.1}
	i\hbar \partial_t\psi
	= ic\hbar\sigma_{1}\frac{d}{dx}\psi - mc^{2}\sigma_{3}\psi + G_{\psi}(x,\psi),
\end{equation}
for a function $\psi: \mathbb{R} \times \mathcal{G} \rightarrow \mathbb{C}^{2}$ representing the wave function of the state of an electron on a periodic quantum graph $\mathcal{G}$.
Here $G: \mathcal{G} \times \mathbb{C}^{2} \rightarrow \mathbb{R}$ is a nonlinear potential, $c$ denotes the speed of light, $m>0$ the mass of the electron, $\hbar$ is Planck's constant, and $\sigma_{1}$ and $\sigma_{3}$ are the Pauli matrices
\[
\sigma_{1}=\begin{pmatrix}
	0 & 1\\
	1 & 0\\
\end{pmatrix},
\qquad
\sigma_{3}=\begin{pmatrix}
	1 & 0\\
	0 & -1\\
\end{pmatrix}.
\]
Throughout, $G_{\psi}(x,\psi)$ denotes the real Fr\'echet derivative of the map $\psi\mapsto G(x,\psi)$ after identifying $\mathbb{C}^{2}\cong\mathbb{R}^{4}$.

Assuming that $G$ satisfies $G(x, e^{i \theta} \psi)=G(x, \psi)$ for all $\theta \in \mathbb{R}$, we look for solutions of the form
\[
\psi(t, x)=e^{-i \vartheta t} u(x).
\]
These are the stationary solutions of \eqref{eq-1.1}. The function $u: \mathcal{G} \rightarrow \mathbb{C}^{2}$ then solves
\begin{equation}\label{eq-1.2}
	-ic\hbar \sigma_{1}\frac{d}{dx}u + mc^{2}\sigma_{3}u = G_{u}(x,u)-\hbar \vartheta u,
\end{equation}
where $G_u(x,u):=G_{\psi}(x,u)$.

Dividing \eqref{eq-1.2} by $\hbar c$ and setting
\[
a=\frac{mc^{2}}{\hbar c}>0,\quad \omega=\frac{\vartheta}{c}\in\mathbb{R},\quad
F(x,u)=\frac{1}{\hbar c}G(x,u),
\]
we are led to study
\begin{equation}\label{eq-1.3}
	-i\sigma_{1}\frac{d}{dx}u + a\sigma_{3}u + \omega u = F_{u}(x,u),
\end{equation}
where $a>0$ and $\omega \in \mathbb{R}$, and $F_u$ is understood as the real gradient on $\mathbb{C}^2\cong\mathbb{R}^4$. 
	
	In recent years, linear and nonlinear Dirac equations have attracted considerable attention,
	since they arise as effective models in many physical contexts. The nonlinear Dirac equation was initially introduced as a field equation for relativistic interacting fermions and
	was later used to describe quark confinement in particle physics. In the setting of
	Bose--Einstein condensates, the NLDE has been analyzed, for instance, in \cite{MR2542748},
	and relativistic soliton solutions of nonlinear Dirac equations for Bose--Einstein condensates
	in cellular optical lattices were studied in \cite{MR3371128}.
	
	Several works have also investigated Dirac type models in periodic media and lattice
	structures. In \cite{MR2947949}, honeycomb lattice potentials and the occurrence of Dirac
	points were analyzed. The evolution of wave packets with spectral concentration near such
	Dirac points was studied in \cite{MR3162492}, while \cite{MR3749319} derived effective
	nonlinear Dirac type equations via multiscale expansions together with rigorous error
	estimates. Moreover, the existence of steady state solutions for cubic and Hartree type
	Dirac equations describing honeycomb structures and graphene samples has been established
	in \cite{MR3705703,MR3794033,MR3859463}.
	
	The study of Dirac operators on quantum graphs has recently attracted increasing interest.
	In \cite{MR3557323}, the Dirac--Krein operator with a potential on a compact quantum graph
	with finitely many edges was investigated, yielding Krein type resolvent formulas, results
	on the multiplicity of eigenvalues, and interlacing properties of the spectrum. Dirac
	operators on graphs and associated trace formulas for their spectra were derived in
	\cite{MR1965289}. In \cite{MR2459887}, the spectra of Laplace and Dirac operators on
	quantum graphs were analyzed using boundary triplet theory. In the simplified situation
	of a three-edge star graph, a nonlinear Dirac equation on a network was considered in
	\cite{MR3871194}, where stationary soliton solutions on a simple quantum graph were
	constructed.
	
	An overview of existence results for one-dimensional NLDE is as follows.
	Cacciapuoti et al.~\cite{MR3665199} studied the Cauchy problem for a one-dimensional
	nonlinear Dirac equation with nonlinearities concentrated at a single point, proving global
	well-posedness and conservation of mass and energy. Pelinovsky \cite{MR2883845} considered one-dimensional nonlinear Dirac equations on the
	line and established global existence of small-norm solutions for very general classes of
	equations with cubic and higher-order nonlinear terms. More recently, NLDE on noncompact
	quantum graphs has been investigated in \cite{GuLiRuzhanskyYang2025,GuRuzhanskyWeiYang2025},
	where the nonrelativistic limit and qualitative properties of bound states, as well as
	multiplicity and semiclassical concentration of solutions for equations with external
	potentials, have been obtained.

	In this article, we assume that $\mathcal{G}$ is a connected metric graph which carries
	a free, cocompact action of the group $\mathbb{Z}^{d}$ by graph automorphisms.
	More precisely, there exists a group homomorphism
	\[
	\mathbb{Z}^{d}\ni k\longmapsto T^{k}\in\operatorname{Aut}(\mathcal{G})
	\]
	such that:
	\begin{itemize}
		\item[$(a)$] the action $\mathbb{Z}^{d}\times\mathcal{G}\to\mathcal{G}$,
		\((k,x)\mapsto T^{k}(x)\), is free and by isometries on each edge;
		\item[$(b)$] the quotient graph $\mathcal{G}/\mathbb{Z}^{d}$ is compact
		(a finite metric graph);
		\item[$(c)$] there exists a compact connected subgraph
		$\mathcal{K}\subset\mathcal{G}$ such that
		\[
		\mathcal{G} = \bigcup_{k\in\mathbb{Z}^{d}} T^{k}(\mathcal{K}),
		\]
		and $T^{k}(\mathcal{K})\cap T^{\ell}(\mathcal{K})$ consists only of boundary
		vertices whenever $k\neq\ell$.
	\end{itemize}
	The set $\mathcal{K}$ is called a fundamental cell of the periodic graph
	$\mathcal{G}$. We study the following more general Dirac equation on the periodic quantum graph:
	\begin{equation}\label{eq-1.5}
		-i\sigma_{1}\frac{d}{dx}u + \big(V(x)+a\big)\sigma_{3}u+\omega u=F_{u}(x,u).
	\end{equation}
    	
	For our purposes, we assume:
	\begin{itemize}
		\item[(\(\omega\))] \(\omega\in(-a,a)\).
		
		\item[(\(F_{0}\))] \(F\in C^{1}\big(\mathcal{G}\times\mathbb{C}^{2},[0,\infty)\big)\).
		
		\item[(\(F_{1}\))] \(F\) is invariant under the \(\mathbb{Z}^{d}\)-action, that is
		\[
		F(T^{k}x,u)=F(x,u)\quad\text{for all }k\in\mathbb{Z}^{d},\ x\in\mathcal{G},\ u\in\mathbb{C}^{2}.
		\]
		
		\item[(\(F_{2}\))] \(F_{u}(x,u)=o(|u|)\) as \(u\to 0\), uniformly in \(x\in\mathcal{G}\).
	\end{itemize}
	
	We first treat the case where $F_u(x,u)$ is asymptotically linear in $u$, while the associated quantity $\hat F$ has a positive but possibly subquadratic growth at infinity, in the sense of $(F_3)$–$(F_4)$.
	Set
	\[
	\omega_{0}=\min\{a+\omega,a-\omega\}
	\quad\text{and}\quad
	\hat{F}(x,u)=\frac{1}{2}\,F_{u}(x,u)\cdot u-F(x,u),
	\]
	where \(\cdot\) denotes the real scalar product on \(\mathbb{C}^{2}\cong\mathbb{R}^{4}\).
	We require:
	
	\begin{itemize}
		\item[(\(F_{3}\))] There exists \(b\in C^{1}(\mathcal{G},\mathbb{R})\) such that
		\[
		\frac{\big|F_{u}(x,u)-b(x)u\big|}{|u|}\to 0
		\quad\text{as }|u|\to\infty
		\]
		uniformly in \(x\in\mathcal{G}\), and
		\[
		\inf_{\mathcal{G}} b > \sup_{\mathcal{G}} V +a+ \omega.
		\]
		
		\item[(\(F_{4}\))] \(\hat{F}(x,u)>0\) if \(u\neq 0\), and there exist \(\kappa\in(0,2)\), \(R>0\) and \(c_{1}>0\) such that
		\[
		\hat{F}(x,u)\ge c_{1}|u|^{\kappa}
		\quad\text{whenever }|u|\ge R.
		\]
		
		\item[(\(F_{5}\))] \(F\in C^{2}\big(\mathcal{G}\times\mathbb{C}^{2},[0,\infty)\big)\), and there exist \(\nu\in(0,1)\) and \(C_{1}>0\) such that
		\[
		|F_{uu}(x,u)|\le C_{1}\big(1+|u|^{\nu}\big)
		\quad\text{for all }(x,u)\in\mathcal{G}\times\mathbb{C}^{2}.
		\]
	\end{itemize}
	
	First, we consider equations with periodic potentials:
	
	\begin{itemize}
		\item[(V$_1$)] $V\in C^1(\mathcal{G},[0,\infty))$ is bounded and
		$\mathbb Z^d$–periodic, that is,
		\[
		V(T^k x)=V(x)\quad\forall\,k\in\mathbb Z^d,\ x\in\mathcal G.
		\]
	\end{itemize}
	
	We obtain the following result.
	
	\begin{Thm}\label{theo-1.1}
		Let $\mathcal{G}$ be a periodic quantum graph as above with fundamental cell $\mathcal{K}$, and assume that $(\omega)$, $(V_{1})$ and $(F_{0})$--$(F_{5})$ hold. Then the NLDE \eqref{eq-1.5} admits at least one bound state $u$. If, in addition to the above assumptions, $F$ is even in $u$, then the NLDE \eqref{eq-1.5} possesses infinitely many geometrically distinct bound states.
	\end{Thm}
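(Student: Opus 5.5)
The plan is to prove Theorem~\ref{theo-1.1} by a strongly indefinite variational argument: a generalized linking theorem of Kryszewski--Szulkin / Bartsch--Ding type for the existence part, and a multiplicity theorem for even functionals invariant under a discrete group action (Ding's variant of Bartsch--Ding) for the infinitely many bound states.

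\textbf{Operator and functional.} Let $A=-i\sigma_1\frac{d}{dx}+(V+a)\sigma_3+\omega$ act in $L^2(\mathcal G,\mathbb C^2)$, with vertex conditions making the free operator self-adjoint (e.g.\ Kirchhoff-type conditions for the Dirac operator on the edges of $\mathcal G$). Since $V\ge0$ is bounded, $V\sigma_3$ is a bounded symmetric perturbation. Writing $D_0=-i\sigma_1\frac{d}{dx}+a\sigma_3$, we have $D_0^2=-\frac{d^2}{dx^2}+a^2$ componentwise, and the spectrum of $D_0$ is contained in $\mathbb R\setminus(-a,a)$. The key spectral claim is $(-\omega_0,\omega_0)\cap\sigma(A)=\emptyset$. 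To prove it, square $D_0+V\sigma_3$: the anticommutator $\{\sigma_1,\sigma_3\}=0$ kills the mixed derivative terms, leaving only a term $V'\sigma_2$-type. I expect this term may need some control, and if it is not controllable one can instead argue by min-max in $\pm$ subspaces. This gives $\sigma(A)\subset(-\infty,-a+\omega]\cup[a+\omega,\infty)$, and in any case $0\notin\sigma(A)$. Then
\[
E=\mathcal D(|A|^{1/2})=E^+\oplus E^-,\qquad \|u\|^2=(|A|^{1/2}u,|A|^{1/2}u)_{2},
\]
with $E\hookrightarrow H^{1/2}$ on each edge, hence $E\hookrightarrow L^p(\mathcal G)$ continuously for $p\in[2,\infty)$ and compactly into $L^p_{loc}$. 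Both $E^\pm$ are infinite dimensional, so the problem is strongly indefinite. Define
\[
\Phi(u)=\tfrac12\|u^+\|^2-\tfrac12\|u^-\|^2-\int_{\mathcal G}F(x,u)\,dx .
\]
By $(F_0)$, $(F_2)$ and $(F_3)$ we have $|F_u|\le \varepsilon|u|+C_\varepsilon|u|$, so $\Phi\in C^1$ and its critical points are weak (hence, by edgewise regularity, strong) bound states. Assumption $(F_5)$ gives $C^2$-type estimates. $\mathbb Z^d$-invariance follows from $(V_1)$, $(F_1)$ and the fact that $T^k$ commutes with $A$.

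\textbf{Linking geometry.} First, $(F_2)$ and $(F_3)$ give $F(x,u)\le\varepsilon|u|^2+C_\varepsilon|u|^p$ for some $p>2$, hence $\inf\Phi(\partial B_\rho\cap E^+)>0$ for small $\rho$. Second, for $e\in E^+$ we need $\sup\Phi(E^-\oplus\mathbb R^+e)<\infty$ together with $\Phi\le0$ outside a large ball. Choose $e$ in the spectral subspace of $A$ for $[a+\omega,\,a+\omega+\delta]$ with $\inf b>\sup V+a+\omega+\delta$, which is possible by $(F_3)$ since $\sigma(A)$ contains points there. Argue by contradiction with normalized sequences $w_n=u_n/\|u_n\|\rightharpoonup w$. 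The asymptotically linear term gives $\int F(x,u_n)/\|u_n\|^2\to\tfrac12\int b|w|^2$, while the quadratic form satisfies $\|w^+\|^2-\|w^-\|^2\le (a+\omega+\delta)|w^+|_2^2-\dots$. The choice of $e$ then yields the contradiction; the case $w=0$ is excluded via $\Phi\ge0$.

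\textbf{Cerami sequences and conclusion.} The main obstacle is the boundedness of Cerami sequences, since Ambrosetti--Rabinowitz fails. Given $(u_n)$ with $\Phi(u_n)\to c$ and $(1+\|u_n\|)\Phi'(u_n)\to0$, we have $\int\hat F(x,u_n)\to c$. Suppose $\|u_n\|\to\infty$ and set $v_n=u_n/\|u_n\|$, testing $\Phi'(u_n)$ against $v_n^+-v_n^-$. By Lions' lemma on the graph (using the translates of the fundamental cell $\mathcal K$), either $v_n\to0$ in $L^p$, $p>2$, or after $\mathbb Z^d$-translation $v_n\rightharpoonup v\ne0$. In the vanishing case, splitting $\{|u_n|\le R\}$ and $\{|u_n|>R\}$ and using $(F_4)$ ($|u_n|^\kappa$ integrals bounded, so the measure of the large set is $O(\|u_n\|^{-\kappa})$) together with the linear growth of $F_u$ forces $1\le o(1)$. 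In the nonvanishing case $v$ solves the linear limit problem $Av=b v$, i.e.\ $b$ would be an eigenvalue of the periodic operator $A$. This is excluded because a periodic operator has no $L^2$ eigenvalues with compactly supported structure issues; it is the delicate point, and I would use $(F_4)$ instead: $\int\hat F(u_n)\ge c_1\int_{|u_n|\ge R}|u_n|^\kappa\to\infty$ on the set where $v\ne0$, contradicting boundedness. Once sequences are bounded, $(F_5)$ and the compact local embedding give that the weak limits after translation are nontrivial critical points ($c>0$ excludes vanishing). The generalized linking theorem (with the $\tau$-topology weakly upper semicontinuity of $-\int F$, from $F\ge0$ convexity-free arguments plus $(F_5)$) produces one bound state. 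For even $F$, assume by contradiction finitely many orbits. Boundedness plus the discreteness of Cerami sequences (the Bartsch--Ding $(C)_I$-attractor condition, derived from splitting of bounded PS sequences into finitely many translated profiles) then lets Ding's multiplicity theorem for $\mathbb Z^d$-invariant even functionals yield an unbounded sequence of critical values, contradicting the finiteness assumption.
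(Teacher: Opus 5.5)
Your architecture matches the paper's: the form domain of $|A|^{1/2}$, the Bartsch--Ding linking and multiplicity theorems, Cerami boundedness from $(F_4)$, and profile splitting modulo $\mathbb Z^d$ via $(F_5)$. The linking step, however, contains a genuine error. Let $A_0=-i\sigma_1\frac{d}{dx}+(V+a)\sigma_3$, i.e.\ your $A$ without $\omega$. You take $e$ in the spectral subspace for $[a+\omega,a+\omega+\delta]$, where $\delta$ must be small whenever $\inf b$ is close to $\sup V+a+\omega$, and you assert that the spectrum meets this interval. That holds only if $a$ is the bottom of the positive spectrum of $A_0$, which is false in general. For $V\equiv c>0$ one has $A_0^2=B^2+(a+c)^2$ with $B=-i\sigma_1\frac{d}{dx}$, so $\sigma(A_0)\cap(-(a+c),a+c)=\emptyset$, and your subspace is $\{0\}$ whenever $\delta<c$.

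The term $\sup V$ in $(F_3)$ is there precisely to pay for this. You must take $e$ in the positive spectral subspace of $A_0$ for $(0,\gamma]$ with $a+\sup V<\gamma<\inf b-\omega$; the multiplicity theorem needs the whole sequence $N_n$ there too. Then $\|w^+\|^2+\omega|w^+|_2^2\le(\gamma+\omega)|w^+|_2^2$ with $\gamma+\omega<\inf b$, and you must prove this subspace is nontrivial. This is the missing spectral lemma (cf.\ Lemma~\ref{lem-3.3}).
\begin{itemize}
\item[(a)] Take $\eta_N$ a cut-off equal to $1$ on a box of $\sim N^d$ cells, with $|\eta_N'|_2/|\eta_N|_2\to0$. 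The spinor $v_N=(\eta_N,0)^T$ satisfies the vertex conditions, and $|A_0v_N|_2^2=|\eta_N'|_2^2+\int(V+a)^2|\eta_N|^2$, $(A_0v_N,v_N)_{L^2}=\int(V+a)|\eta_N|^2$.
\item[(b)] If $\sigma(A_0)\cap(0,\gamma]=\emptyset$, the spectral theorem gives $|A_0v|_2^2\ge\gamma(A_0v,v)_{L^2}$. Hence $|\eta_N'|_2^2\ge a(\gamma-a-\sup V)|\eta_N|_2^2$, a contradiction.
\item[(c)] For $(\Phi_4)$ this subspace must also be infinite-dimensional, which you never address. It follows from $\mathbb Z^d$-invariance: a nonzero finite-dimensional invariant subspace would contain a common eigenvector of the translations, whose modulus is periodic and hence not in $L^2$.
\end{itemize}
Also, with only $w_n\rightharpoonup w$ you cannot claim $\int F(x,u_n)/\|u_n\|^2\to\frac12\int_{\mathcal G}b|w|^2$. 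Use $F\ge0$ to restrict to a finite union of cells carrying almost all of $|w|_2^2$, and get a $\liminf$ inequality there.

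Your spectral-gap step is also left open, and squaring $A_0$ is the wrong expansion, because the resulting $V'$ term has no sign. The hypothesis $V\ge0$ enters by splitting off only the constant mass:
\[
|A_0u|_2^2=|(B+V\sigma_3)u|_2^2+a^2|u|_2^2+2a(Vu,u)_{L^2}+2a\operatorname{Re}(Bu,\sigma_3u)_{L^2}.
\]
The last term vanishes because $\sigma_1\sigma_3=-\sigma_3\sigma_1$ and $\sigma_3$ preserves the Kirchhoff-type vertex conditions. Hence $|A_0u|_2\ge a|u|_2$, so $\sigma(A_0)\cap(-a,a)=\emptyset$ and $\|u\|^2\ge(a-|\omega|)|u|_2^2$ in your normalization. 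With these two spectral facts supplied, the rest of your plan goes through.

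Your boundedness argument works but is more roundabout than the paper's. From $\int\hat F(x,u_n)\le C$ and $(F_4)$ one gets $\int_{\{|u_n|\ge\rho\}}|v_n|^\kappa=O(\|u_n\|^{-\kappa})$. Interpolating between $L^\kappa$ and $L^{\bar s}$ then gives $\int_{\{|u_n|\ge\rho\}}|v_n|^2\to0$ for every $\rho>0$. Together with $(F_2)$ on $\{|u_n|<\rho\}$, this contradicts the identity from testing with $u_n^+-u_n^-$, without any vanishing/non-vanishing dichotomy. Note that it is this integral that is $O(\|u_n\|^{-\kappa})$; the measure of $\{|u_n|>R\}$ is merely bounded.
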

	
	Next we consider the superquadratic case, where we assume:
	
	\begin{itemize}
		\item[(\(F_{6}\))] \(F(x,u)\,|u|^{-2}\to\infty\) as \(|u|\to\infty\), uniformly in \(x\in\mathcal{G}\).
		
		\item[(\(F_{7}\))] \(\hat{F}(x,u)>0\) if \(u\neq 0\), and there exist \(\sigma>1\), \(r>0\) and \(c_{2},c_{3}>0\) such that
		\begin{itemize}
			\item[(i)] \(\hat{F}(x,u)\ge c_{2}|u|^{2}\) if \(|u|\ge r\),
			\item[(ii)] \(|F_{u}(x,u)|^{\sigma}\le c_{3}\,\hat{F}(x,u)\,|u|^{\sigma}\) if \(|u|\ge r\).
		\end{itemize}
	\end{itemize}
	
	We then have the following theorem.
	
	\begin{Thm}\label{theo-1.2}
		Let $\mathcal{G}$ be a periodic quantum graph as above with fundamental cell $\mathcal{K}$, and assume that $(\omega)$, $(V_{1})$ and $(F_{0})$--$(F_{2})$, $(F_{5})$, $(F_{6})$, $(F_{7})$ hold. Then the NLDE \eqref{eq-1.5} admits at least one bound state $u$. If, in addition to the above assumptions, $F$ is even in $u$, then the NLDE \eqref{eq-1.5} possesses infinitely many geometrically distinct bound states.
	\end{Thm}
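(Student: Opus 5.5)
We will prove Theorem \ref{theo-1.2} variationally, following the scheme for Theorem \ref{theo-1.1} but with a different compactness argument. Let $A=-i\sigma_1\frac{d}{dx}+(V+a)\sigma_3+\omega$ be the self-adjoint operator on $L^2(\mathcal G,\mathbb C^2)$, with Kirchhoff-type vertex conditions that make it self-adjoint. We first check that $0\notin\sigma(A)$. Since $V\ge0$, the spectrum of $-i\sigma_1\frac{d}{dx}+(V+a)\sigma_3$ avoids $(-a,a)$; this follows from $\|D_Vu\|^2\ge a^2\|u\|^2$, obtained by expanding the square and using that the cross terms reduce to the nonnegative term $\int V'$, which vanishes by periodicity. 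By $(\omega)$ the gap of $A$ then contains a neighbourhood of $0$ of size $\omega_0$.

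We work in $E=\mathcal D(|A|^{1/2})$ with the norm $\|u\|^2=(|A|u,u)_2$ and the splitting $E=E^+\oplus E^-$. The functional is
\[
\Phi(u)=\tfrac12\|u^+\|^2-\tfrac12\|u^-\|^2-\int_{\mathcal G}F(x,u)\,dx .
\]
On a graph $E\hookrightarrow H^{1/2}$ edgewise, so $E\hookrightarrow L^p$ for all $p\in[2,\infty)$, with local compactness on bounded subgraphs. By $(F_5)$, $\Phi\in C^1$, and critical points of $\Phi$ are weak, hence strong, solutions of \eqref{eq-1.5}. $\Phi$ is invariant under the $\mathbb Z^d$-action $u\mapsto u\circ T^k$ by $(F_1)$ and $(V_1)$.

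\textbf{Linking.} By $(F_2)$ and $(F_5)$ we have $|F(x,u)|\le\varepsilon|u|^2+C_\varepsilon|u|^p$ for some $p>2$, so $\Phi\ge\rho>0$ on $S_r\cap E^+$. We then fix $e\in E^+$ and the set $Q=\{u^-+se:\ s\ge0,\ \|u^-+se\|\le R\}$ and show $\Phi\le0$ on $\partial Q$. Suppose instead that there are $u_n=u_n^-+s_ne$ with $\|u_n\|\to\infty$ and $\Phi(u_n)>0$. Normalizing $v_n=u_n/\|u_n\|\rightharpoonup v$ and using $(F_6)$ with Fatou's lemma on the set where $v\neq0$ gives a contradiction; if $v^+=0$, then $\Phi\ge0$ forces $\|v_n^-\|^2\le\|v_n^+\|^2\to0$, which contradicts $\|v_n\|=1$. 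We then apply the generalized linking theorem of Kryszewski–Szulkin / Bartsch–Ding for strongly indefinite functionals, which uses the $\tau$-topology and requires weak sequential upper semicontinuity of $\int F$ and weak sequential continuity of $\Phi'$. This yields a Cerami sequence at a level $c\in[\rho,\sup_Q\Phi]$.

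\textbf{Boundedness of Cerami sequences.} This is the main obstacle. Suppose $\|u_n\|\to\infty$, set $v_n=u_n/\|u_n\|$, and note $\int\hat F(x,u_n)=\Phi(u_n)-\frac12\Phi'(u_n)u_n\le C$. We apply Lions' lemma on the cells $T^k\mathcal K$. In the vanishing case, $v_n\to0$ in $L^p$ for $p>2$. Testing with $u_n^+-u_n^-$ gives
\[
1+o(1)=\int\frac{F_u(x,u_n)\cdot(v_n^+-v_n^-)}{\|u_n\|}.
\]
We split this integral over $\{|u_n|<r\}$, controlled by $(F_2)$ with a small constant times $\|v_n\|_2^2$ and hence made small via $|F_u|\le\varepsilon|u|$, and over $\{|u_n|\ge r\}$. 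On the latter set we use $(F_7)$(ii) and Hölder with exponents $\sigma,\sigma'$: here $\int\hat F\le C$ bounds $\int(|F_u|/|u_n|)^\sigma$, and $|v_n|^{\sigma'}\in L^1$-small when $2\sigma'>2$, i.e.\ we need $\sigma'>2$ after the exponent bookkeeping, or else an interpolation argument. This forces the right-hand side to $0$, a contradiction. In the nonvanishing case we translate by $k_n$ so that $v_n\rightharpoonup v\neq0$. Then $(F_6)$ gives $\int F(x,u_n)/\|u_n\|^2\to\infty$, which contradicts $\Phi(u_n)\ge0$ in the form $\int F/\|u_n\|^2\le\frac12$.

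\textbf{Conclusion.} With a bounded Cerami sequence in hand, vanishing is excluded since $c>0$, by the same splitting. After translation, $u_n\rightharpoonup u\neq0$ with $\Phi'(u)=0$, using weak-to-weak$^*$ continuity of $\Phi'$ and local compactness. This gives a bound state; its decay follows since $u\in E\subset L^2$.

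For even $F$ we argue by contradiction, assuming there are only finitely many geometrically distinct critical points. Under this assumption, the translation-invariant Cerami condition gives the discreteness property of Cerami sequences (Kryszewski–Szulkin / Ding's lemma). The $\mathbb Z_2$-equivariant linking of Bartsch–Ding, applied on increasing finite-dimensional $E^+$-subspaces, then yields an unbounded sequence of critical values, which is a contradiction.
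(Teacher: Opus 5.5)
Your architecture is the paper's. You use Bartsch--Ding linking (Theorem~\ref{theo-2.2}, with $(\Phi_0)$ via Theorem~\ref{theo-2.4}) on $Y=Y^-\oplus Y^+$ to get a Cerami sequence at a level $c\ge\eta>0$. You then exclude vanishing through Lemma~\ref{lem:2.2} and $c>0$, and obtain a nontrivial critical point as a weak limit after a $\mathbb Z^d$-translation. For even $F$ you use the attractor/discreteness argument (Lemma~\ref{lem-3.15}) fed into Theorem~\ref{theo-2.3}. Your linking step (normalize, use $(F_6)$ and Fatou on $\{v\neq0\}$, treat $v^+=0$ separately) is the idea of Lemma~\ref{lem-3.12}(c) in cleaner form, and it works for any $e\in Y^+$.

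The genuine difference is the boundedness of Cerami sequences. Lemma~\ref{lem-3.14} uses neither a dichotomy nor $(F_6)$. From $(F_7)$(i) and $\hat F>0$ it gets $\hat F(x,u)\ge a_\rho|u|^2$ for $|u|\ge\rho$, hence $\int_{\{|u_m|\ge\rho\}}|v_m|^2\,dx\to0$. The H\"older argument with $(F_7)$(ii) is then run only on that set. You instead apply Lions' lemma to $v_n$: vanishing is excluded by $(F_2)$ and $(F_7)$(ii), non-vanishing by $(F_6)$ via $\int_{\mathcal G}F(x,u_n)\,dx/\|u_n\|^2\le\frac12+o(1)$. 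Both routes are valid. Yours is the Ding-type argument: it needs a translation step and a second use of $(F_6)$, but only $\hat F>0$ instead of the quadratic lower bound $(F_7)$(i). The paper's is shorter and translation-free.

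A few steps need repair.

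First, your proof that $\sigma(D_V)\cap(-a,a)=\emptyset$ is wrong as written. Expanding $\|D_Vu\|_2^2$ fully leaves a cross term proportional to $\int_{\mathcal G}V'\,\mathrm{Im}(u^1\overline{u^2})\,dx$. That term has no sign and is not killed by periodicity. Group as in Lemma~\ref{lem-3.1} instead: with $B=-i\sigma_1\frac{d}{dx}$,
\[
\|D_Vu\|_2^2=\|Bu+V\sigma_3u\|_2^2+a^2|u|_2^2+2a(Vu,u)_{L^2}+2a\Re(Bu,\sigma_3u)_{L^2}.
\]
Here the last term is a pure boundary term cancelled by \eqref{eq-2.3}--\eqref{eq-2.4}, and $(Vu,u)_{L^2}\ge0$ because $V\ge0$.

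Second, Theorem~\ref{theo-2.4} needs $\Psi=\int_{\mathcal G}F$ to be weakly sequentially \emph{lower} semicontinuous, not upper. This follows from Fatou and $F\ge0$.

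Third, in your vanishing case the radius $r_\varepsilon$ from $(F_2)$ and the radius $r$ in $(F_7)$(ii) differ. On $\{r_\varepsilon\le|u_n|\le r\}$ neither bound applies. Extend $(F_7)$(ii) there as in Step~1 of Lemma~\ref{lem-3.14}: use compactness of $\mathcal K\times\{r_\varepsilon\le|u|\le r\}$, continuity, periodicity and $\hat F>0$ to get $(|F_u(x,u)|/|u|)^\sigma\le C_\varepsilon\hat F(x,u)$ for all $|u|\ge r_\varepsilon$. H\"older then bounds that part by $C_\varepsilon'|v_n|_{2\sigma'}|v_n^+-v_n^-|_{2\sigma'}\to0$. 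No condition like $\sigma'>2$ is needed, since $2\sigma'>2$ for every $\sigma>1$.

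Finally, in the multiplicity part, $(F_5)$ is what makes the splitting $\Phi'(v_m)-\Phi'(w_m)\to0$ in the profile decomposition work. Its role is not only to make $\Phi\in C^1$.
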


	\begin{Rek}\label{rem-1.1}
In this paper, two bound states $u_{1}$ and $u_{2}$ are said to be
geometrically distinct if they are not related by the natural
symmetries of the problem. Under the periodicity assumptions $(V_1)$ and $(F_1)$ and the gauge invariance
$F(x,e^{i\theta}u)=F(x,u)$ for all $\theta\in\mathbb{R}$, the equation is invariant under the phase action and the $\mathbb{Z}^{d}$-action on $\mathcal{G}$.

More precisely, for each $k\in\mathbb{Z}^{d}$ we define the translated
spinor $k*u$ by
\[
(k*u)(x)=u\big(T^{-k}x\big),\qquad x\in\mathcal{G},
\]
where $T^{k}\in\mathrm{Aut}(\mathcal{G})$ is the graph automorphism
introduced above. Then $u_{1}$ and $u_{2}$ are geometrically distinct
if
\[
u_{2}\neq e^{i\theta}\,(k*u_{1})
\quad\text{for all }\theta\in\mathbb{R},\ k\in\mathbb{Z}^{d}.
\]
\end{Rek}

\par
A distinctive feature of the NLDE is the strong indefiniteness of its action functional. Unlike in the Schr\"odinger case, where the energy functional is typically bounded from below \cite{MR1400007}, the Dirac action lacks coercivity in any natural function space, since the spectrum of the underlying operator is unbounded both above and below. This indefiniteness precludes the use of direct minimization techniques and calls for more sophisticated critical point theories \cite{MR2389415}. Moreover, the noncompactness of the graph undermines the Palais--Smale (PS) condition, a cornerstone of classical variational methods. Consequently, standard approaches to existence and multiplicity---such as the mountain pass theorem or symmetric minimax principles---require substantial adaptation or replacement.
\par
In this work, we combine several strategies to overcome these difficulties. First, we exploit the spectral decomposition of the self-adjoint Dirac operator to split the form domain into positive and negative subspaces, which yields a natural strongly indefinite variational framework. In the periodic setting, compactness may fail due to translation invariance on $\mathcal{G}$; this is handled by a concentration--compactness argument modulo the $\mathbb{Z}^{d}$-action, which allows one to recover compactness for suitable Palais--Smale sequences up to translations. Next, we use variational and symmetric critical point arguments to construct critical points, taking into account the vertex conditions at the graph junctions.
\par
The paper is organized as follows. In Section~2 we introduce the quantum graph model, specify the vertex conditions, and formulate the NLDE as a variational problem, together with the abstract critical point theorem that we shall use. Section~3 is devoted to the case of periodic potentials: we study the spectral properties of the Dirac operator under assumption $(V_{1})$ and prove Theorems~\ref{theo-1.1} and~\ref{theo-1.2}.

	\section{Preliminaries}
	\subsection{Quantum graphs and functional setting}
	
	We refer to \cite{MR4438617} and the references therein for a general introduction to quantum graphs, and we briefly recall here the basic notions needed in this paper.
	
	A quantum graph $\mathcal{G}=(V,E)$ is a connected metric graph consisting of a countable set of edges and vertices, with each vertex of finite degree, possibly with multiple edges and self-loops. Each edge is identified either with a finite interval or with a half-line, and the edges are glued together at their endpoints according to the topology encoded by the underlying combinatorial graph.
	
	Unbounded edges are identified with $[0,+\infty)$ and are called half-lines, while bounded edges are identified with closed intervals $I_{e}=[0,\ell_{e}]$, where $\ell_{e}>0$ denotes the length of the edge $e$. We assign to each edge $e$ a coordinate $x_{e}\in I_{e}$ measuring the arclength along $e$. If the number of edges and vertices is finite and the graph does not contain any half-lines, then $\mathcal{G}$ is compact. In general, and in particular in the periodic setting considered in this paper, $\mathcal{G}$ will be noncompact.

    In the present paper, $\mathcal{G}$ is a periodic quantum graph as described in the Introduction, and we fix a compact fundamental cell $\mathcal{K}\subset\mathcal{G}$ such that
\[
\mathcal{G} = \bigcup_{k\in\mathbb{Z}^{d}} T^{k}(\mathcal{K}),
\]
where $T^{k}$ is a free, cocompact $\mathbb{Z}^{d}$-action by graph automorphisms. The cell $\mathcal K$ will be used as a fundamental domain for the periodic structure of $\mathcal G$.
	
A function $u:\mathcal G\to\mathbb C$ can be identified with a family
$(u_{e})_{e\in\mathcal E}$, where $u_{e}:I_{e}\to\mathbb C$ is the
restriction of $u$ to the edge $I_{e}$. For $1\le p<\infty$ we consider
\[
L^{p}(\mathcal G)
=\Bigl\{u=(u_{e})_{e\in\mathcal E}:\ u_{e}\in L^{p}(I_{e})\ \text{for all }e,\
\sum_{e\in\mathcal E}\|u_{e}\|_{L^{p}(I_{e})}^{p}<\infty\Bigr\},
\]
with norm
\[
\|u\|_{L^{p}(\mathcal G)}^{p}
=\sum_{e\in\mathcal E}\|u_{e}\|_{L^{p}(I_{e})}^{p},
\qquad 1\le p<\infty,
\]
and
\[
\|u\|_{L^{\infty}(\mathcal G)}
=\sup_{e\in\mathcal E}\|u_{e}\|_{L^{\infty}(I_{e})}.
\]
We define
\[
H^{1}(\mathcal G)
=\Bigl\{u=(u_{e})_{e\in\mathcal E}:\ u_{e}\in H^{1}(I_{e})\ \text{for all }e,\
\sum_{e\in\mathcal E}\|u_{e}\|_{H^{1}(I_{e})}^{2}<\infty\Bigr\},
\]
with norm
\[
\|u\|_{H^{1}(\mathcal G)}^{2}
=\|u'\|_{L^{2}(\mathcal G)}^{2} + \|u\|_{L^{2}(\mathcal G)}^{2},
\]
where $u'=(u_{e}')_{e\in\mathcal E}$ denotes the family of weak derivatives
along each edge.

A spinor $u:\mathcal G\to\mathbb C^{2}$ is a map $u=(u^{1},u^{2})^{T}$ whose
components $u^{1},u^{2}:\mathcal G\to\mathbb C$ are scalar functions. Equivalently,
one may regard $u$ as a family of $2$-spinors
\[
u_{e} = \binom{u_{e}^{1}}{u_{e}^{2}} : I_{e}\to\mathbb C^{2},
\qquad e\in\mathcal E.
\]
We consider
\[
L^{p}(\mathcal G,\mathbb C^{2})
=\{u=(u^{1},u^{2})^{T}:\ u^{1},u^{2}\in L^{p}(\mathcal G)\},
\]
with
\[
\|u\|_{L^{p}(\mathcal G,\mathbb C^{2})}^{p}
=\|u^{1}\|_{L^{p}(\mathcal G)}^{p}
+\|u^{2}\|_{L^{p}(\mathcal G)}^{p},
\quad 1\le p<\infty,
\]
and
\[
\|u\|_{L^{\infty}(\mathcal G,\mathbb C^{2})}
=\max\bigl\{\|u^{1}\|_{L^{\infty}(\mathcal G)},
\|u^{2}\|_{L^{\infty}(\mathcal G)}\bigr\}.
\]
Similarly,
\[
H^{1}(\mathcal G,\mathbb C^{2})
=\{u=(u^{1},u^{2})^{T}:\ u^{1},u^{2}\in H^{1}(\mathcal G)\},
\]
with
\[
\|u\|_{H^{1}(\mathcal G,\mathbb C^{2})}^{2}
=\|u^{1}\|_{H^{1}(\mathcal G)}^{2}
+\|u^{2}\|_{H^{1}(\mathcal G)}^{2}.
\]

Since each edge $I_e$ is isometric either to a bounded interval with $\ell_- \le \ell_e \le \ell_+$
or to a half-line, the one-dimensional Sobolev embedding yields a constant $C_S>0$,
independent of $e$, such that
\[
|u_e|_{L^\infty(I_e)} \le C_S \|u_e\|_{H^1(I_e)} \qquad \forall\,e\in E.
\]
Taking the supremum over $e\in E$ we obtain
\[
|u|_{L^{\infty}(\mathcal{G})}
\le C_{\infty}\,\|u\|_{H^{1}(\mathcal{G})}
\]
for a suitable constant $C_\infty>0$.
	Properties of Sobolev spaces on graphs are often expressed in terms of functional inequalities. We recall the following Gagliardo--Nirenberg inequalities on graphs (see, e.g., \cite{MR3494248,MR3456809}).
	Let $\mathcal{G}$ be a noncompact metric graph with finite vertex degrees and uniformly positive lower bound on edge lengths.
If $q\in[2,+\infty)$ and $p\in[q,+\infty]$ and
\[
\alpha = \frac{2}{2+q}\Big(1-\frac{q}{p}\Big),
\]
then there exists a constant $C>0$ such that
\[
|u|_{L^{p}(\mathcal{G})}
\le C\,|u'|_{L^{2}(\mathcal{G})}^{\alpha}\,|u|_{L^{q}(\mathcal{G})}^{1-\alpha}
\quad\forall\,u\in H^{1}(\mathcal{G}).
\]
Moreover, on any graph one has the weaker inequality
\[
|u|_{L^{p}(\mathcal{G})}
\le C\,\|u\|_{H^{1}(\mathcal{G})}^{\alpha}\,|u|_{L^{q}(\mathcal{G})}^{1-\alpha},
\]
which holds for all $u\in H^{1}(\mathcal{G})$ whenever $q\in[2,+\infty)$, $p\in[q,+\infty]$ and $\alpha$ as above.

\begin{Rek}\label{rem2.1}
In this paper we use the globally continuous Sobolev space $H^{1}(\mathcal{G})$ for scalar functions, namely all edgewise components incident at a vertex take the same value there. For spinors we will specify vertex conditions explicitly when defining operators.
\end{Rek}
	
	As in the Schr\"odinger case for the Laplacian, we need to choose appropriate vertex
	conditions in order to make the Dirac operator self-adjoint on
	$L^{2}(\mathcal{G},\mathbb{C}^{2})$. In our arguments, we consider Kirchhoff-type
	vertex conditions (introduced in \cite{MR3871194}), which model the free case for
	the Dirac operator. For more details on self-adjoint extensions of Dirac operators
	on quantum graphs, we refer to \cite{MR1050469,MR2459887,yang2025bound}. We also mention
	\cite{MR3194518}, where boundary conditions for one-dimensional Dirac operators
	are studied in a model of quantum wires.

    \begin{Def}\label{def2.2}
Let $\mathcal{G}$ be a quantum graph, let $a>0$, and let $V\in L^\infty(\mathcal{G},\mathbb{R})$.
We call the Dirac operator with Kirchhoff-type vertex conditions the (unbounded) operator
\[
\mathcal{D}:\operatorname{dom}(\mathcal{D})\subset L^{2}(\mathcal{G},\mathbb{C}^{2})\to L^{2}(\mathcal{G},\mathbb{C}^{2})
\]
with action on each edge $e\in E$ given by
\begin{equation}\label{eq-2.1}
\mathcal{D}_{e}u_{e}
= -i\sigma_{1}u_{e}' + \big(a+V(x)\big)\sigma_{3}u_{e},
\qquad x\in I_{e},
\end{equation}
and with domain
\begin{equation}\label{eq-2.2}
\operatorname{dom}(\mathcal{D})
:= \Big\{ u = (u_{e})_{e\in E} : u_{e}\in H^{1}(I_{e},\mathbb{C}^{2})\ \forall e\in E,\
u \text{ satisfies } \eqref{eq-2.3} \text{ and } \eqref{eq-2.4} \Big\}.
\end{equation}
For each vertex $v\in V$, let
\[
\mathcal{E}_v=\{(e,\xi): e\in E,\ \xi\in\{0,\ell_e\},\ \text{the endpoint }x_e=\xi \text{ corresponds to } v\}.
\]
For $(e,\xi)\in\mathcal{E}_v$ we write $u_e^j(\xi)$ for the trace of the $j$-th component at the endpoint $\xi$.
The vertex conditions are
\begin{gather}\label{eq-2.3}
u_e^{1}(\xi)=u_f^{1}(\eta)
\quad \forall\, (e,\xi),(f,\eta)\in \mathcal{E}_v,\ \forall v\in V,\\
\label{eq-2.4}
\sum_{(e,\xi)\in\mathcal{E}_v} s(\xi)\,u_{e}^{2}(\xi) = 0
\quad \forall v\in V,
\end{gather}
where $s(0)=1$ and $s(\ell_e)=-1$.
It is well known that $\mathcal{D}$ defined above is self-adjoint on $L^{2}(\mathcal{G},\mathbb{C}^{2})$ (for instance, see \cite{MR3934110,yang2025bound}).
\end{Def}

\subsection{Examples of periodic quantum graphs}

We give several basic examples of periodic quantum graphs fitting the assumptions above. They illustrate both one-dimensional periodic structures (with a free, cocompact action of $\mathbb{Z}$) and higher-dimensional analogues (with a free, cocompact action of $\mathbb{Z}^{d}$, $d\ge 2$).

\paragraph{Example 1 (Periodic chain graph).}
Let $\mathcal{G}$ be the infinite chain whose vertices are indexed by $\mathbb{Z}$ and each edge connects two consecutive integers. Each edge is identified with an interval of length $1$. We define the graph automorphism $T:\mathcal{G}\to\mathcal{G}$ by shifting the chain one edge to the right, that is, $T$ maps the vertex $k$ to $k+1$ and each edge $[k,k+1]$ to $[k+1,k+2]$. The action of $\mathbb{Z}$ generated by $T$ is free and by edgewise isometries, and the quotient $\mathcal{G}/\mathbb{Z}$ is a finite graph consisting of one vertex and one loop edge (the image of any edge $[k,k+1]$ under the identification of all vertices modulo $\mathbb{Z}$).
A fundamental cell is given by the edge
\[
\mathcal{K} = [0,1],
\]
so that
\[
\mathcal{G} = \bigcup_{k\in\mathbb{Z}} T^{k}(\mathcal{K}), \qquad
T^{k}(\mathcal{K})\cap T^{\ell}(\mathcal{K}) \text{ consists only of endpoints if } k\neq\ell.
\]

\begin{figure}[ht]
	\centering
	\begin{tikzpicture}[scale=1]
		\foreach \x in {-3,-2,-1,0,1,2,3}{
			\fill (\x,0) circle (2pt);
		}
		\foreach \x in {-3,-2,-1,0,1,2}{
			\draw (\x,0) -- (\x+1,0);
		}
		\draw[very thick] (0,0.15) -- (1,0.15);
		\node[above] at (0.5,0.15) {$\mathcal{K}$};
		\draw[->] (0.5,-0.4) -- (1.5,-0.4);
		\node[below] at (1,-0.4) {$T$};
		\node[below] at (-3,0) {$\cdots$};
		\node[below] at (3,0) {$\cdots$};
	\end{tikzpicture}
	\caption{A periodic chain graph with fundamental cell $\mathcal{K}=[0,1]$.}
	\label{fig:periodic-chain}
\end{figure}
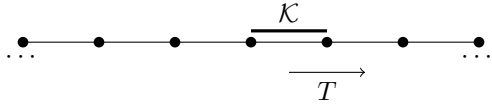

\paragraph{Example 2 (Periodic decorated chain).}
We modify the previous example by attaching to each vertex $k\in\mathbb{Z}$ an additional finite edge (stub) of fixed length $L>0$. The resulting graph is sometimes called a decorated chain. The automorphism $T$ is again the unit shift along the chain: it maps vertex $k$ to $k+1$ and carries each stub at $k$ to the stub at $k+1$. The action of $\mathbb{Z}$ is free and by isometries, and the quotient $\mathcal{G}/\mathbb{Z}$ is a finite graph consisting of one vertex, one loop edge (the horizontal period), and one additional edge (the stub) attached to that vertex. A convenient choice of fundamental cell is the union of the edge $[0,1]$ together with the stub attached at $0$.

\begin{figure}[ht]
	\centering
	\begin{tikzpicture}[scale=1]
		\foreach \x in {-2,-1,0,1,2}{
			\fill (\x,0) circle (2pt);
		}
		\foreach \x in {-2,-1,0,1}{
			\draw (\x,0) -- (\x+1,0);
		}
		\foreach \x in {-2,-1,0,1,2}{
			\draw (\x,0) -- (\x,0.8);
			\fill (\x,0.8) circle (1.5pt);
		}
		\draw[very thick] (0,0.15) -- (1,0.15);
		\draw[very thick] (0,0.05) -- (0,0.8);
		\node[above left] at (0,0.8) {$\mathcal{K}$};
		\draw[->] (0.4,-0.4) -- (1.4,-0.4);
		\node[below] at (0.9,-0.4) {$T$};
		\node[below] at (-2,0) {$\cdots$};
		\node[below] at (2,0) {$\cdots$};
	\end{tikzpicture}
	\caption{A periodic decorated chain graph. Each cell contains one horizontal edge and one stub.}
	\label{fig:decorated-chain}
\end{figure}
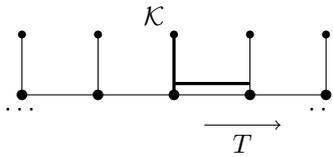

\paragraph{Example 3 (Periodic ladder graph).}
Consider the infinite ladder obtained by taking two parallel copies of the chain in Figure~\ref{fig:periodic-chain} and connecting corresponding vertices by vertical rungs. The automorphism $T$ is again the unit horizontal shift, acting simultaneously on both chains and on the rungs. The quotient $\mathcal{G}/\mathbb{Z}$ is a finite ladder cell with two vertices and three edges (two horizontal and one vertical). A fundamental cell $\mathcal{K}$ can be chosen as the connected subgraph consisting of the two horizontal edges between $x=0$ and $x=1$ together with the vertical rung at $x=0$.

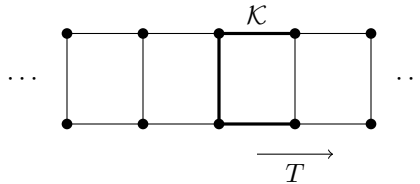
\begin{figure}[ht]
	\centering
	\begin{tikzpicture}[scale=1]
		\foreach \x in {-2,-1,0,1,2}{
			\fill (\x,0.6) circle (2pt);
			\fill (\x,-0.6) circle (2pt);
		}
		\foreach \x in {-2,-1,0,1}{
			\draw (\x,0.6) -- (\x+1,0.6);
			\draw (\x,-0.6) -- (\x+1,-0.6);
		}
		\foreach \x in {-2,-1,0,1,2}{
			\draw (\x,0.6) -- (\x,-0.6);
		}
		\draw[very thick] (0,0.6) -- (1,0.6);
		\draw[very thick] (0,-0.6) -- (1,-0.6);
		\draw[very thick] (0,0.6) -- (0,-0.6);
		\node[above] at (0.5,0.6) {$\mathcal{K}$};
		\draw[->] (0.5,-1.0) -- (1.5,-1.0);
		\node[below] at (1.0,-1.0) {$T$};
		\node[left] at (-2.2,0) {$\cdots$};
		\node[right] at (2.2,0) {$\cdots$};
	\end{tikzpicture}
	\caption{A periodic ladder graph with a natural $\mathbb{Z}$-action along the horizontal direction.}
	\label{fig:ladder}
\end{figure}

\paragraph{Example 4 (Multi-channel periodic strip).}
Consider three parallel chains (upper, middle, lower), each isomorphic to the chain in Example 1, and connect corresponding vertices by vertical edges. The automorphism $T$ is again the unit shift along the horizontal direction, acting on all three chains and on the vertical connections. The quotient $\mathcal{G}/\mathbb{Z}$ is a finite graph describing one transversal slice. A natural choice of fundamental cell $\mathcal{K}$ is the connected subgraph consisting of the three horizontal edges between $x=0$ and $x=1$ together with the two vertical connections at $x=0$.

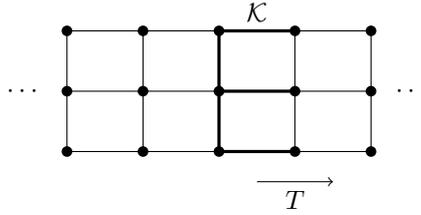
\begin{figure}[ht]
	\centering
	\begin{tikzpicture}[scale=1]
		\foreach \x in {-2,-1,0,1,2}{
			\fill (\x,0.8) circle (2pt);
			\fill (\x,0) circle (2pt);
			\fill (\x,-0.8) circle (2pt);
		}
		\foreach \x in {-2,-1,0,1}{
			\draw (\x,0.8) -- (\x+1,0.8);
			\draw (\x,0) -- (\x+1,0);
			\draw (\x,-0.8) -- (\x+1,-0.8);
		}
		\foreach \x in {-2,-1,0,1,2}{
			\draw (\x,0.8) -- (\x,0);
			\draw (\x,0) -- (\x,-0.8);
		}
		\draw[very thick] (0,0.8) -- (1,0.8);
		\draw[very thick] (0,0) -- (1,0);
		\draw[very thick] (0,-0.8) -- (1,-0.8);
		\draw[very thick] (0,0.8) -- (0,0);
		\draw[very thick] (0,0) -- (0,-0.8);
		\node[above] at (0.5,0.8) {$\mathcal{K}$};
		\draw[->] (0.5,-1.2) -- (1.5,-1.2);
		\node[below] at (1.0,-1.2) {$T$};
		\node[left] at (-2.2,0) {$\cdots$};
		\node[right] at (2.2,0) {$\cdots$};
	\end{tikzpicture}
	\caption{A multi-channel periodic strip graph with a single $\mathbb{Z}$-periodic direction.}
	\label{fig:strip}
\end{figure}

\paragraph{Example 5 (Two-dimensional periodic square lattice).}
Consider the square lattice embedded in $\mathbb{R}^{2}$ with vertex set
\[
\{(m,n): m,n\in\mathbb{Z}\}
\]
and edges connecting all nearest neighbours horizontally and vertically, each edge having length $1$. The group $\mathbb{Z}^{2}$ acts on $\mathcal{G}$ by integer translations
\[
(k_{1},k_{2})\cdot(m,n) = (m+k_{1}, n+k_{2}),
\]
and this action extends to a free, cocompact action by graph automorphisms.

A fundamental cell $\mathcal K$ can be chosen so that different translates intersect only at boundary vertices. For instance, let $\mathcal K$ be the connected subgraph consisting of the vertex $(0,0)$ together with the two edges joining $(0,0)$ to $(1,0)$ and $(0,0)$ to $(0,1)$ (including their endpoints). Then
\[
\mathcal{G} = \bigcup_{(k_1,k_2)\in\mathbb{Z}^2} (k_1,k_2)\cdot \mathcal K,
\]
and any two distinct translates intersect only at vertices. In this case the quotient graph $\mathcal{G}/\mathbb{Z}^{2}$ is a finite graph with one vertex and two loops.

\begin{figure}[ht]
	\centering
	\begin{tikzpicture}[scale=1]
		\foreach \x in {0,1,2}{
			\foreach \y in {0,1,2}{
				\fill (\x,\y) circle (2pt);
			}
		}
		\foreach \y in {0,1,2}{
			\foreach \x in {0,1}{
				\draw (\x,\y) -- (\x+1,\y);
			}
		}
		\foreach \x in {0,1,2}{
			\foreach \y in {0,1}{
				\draw (\x,\y) -- (\x,\y+1);
			}
		}
		\draw[very thick] (0,0) -- (1,0);
		\draw[very thick] (0,0) -- (0,1);
		\node[below left] at (0,0) {$\mathcal{K}$};
		\draw[->] (0.5,-0.4) -- (1.5,-0.4);
		\node[below] at (1.0,-0.4) {$T_{1}$};
		\draw[->] (-0.4,0.5) -- (-0.4,1.5);
		\node[left] at (-0.4,1.0) {$T_{2}$};
	\end{tikzpicture}
	\caption{A two-dimensional periodic square lattice graph with a free $\mathbb{Z}^{2}$-action.}
	\label{fig:square-lattice}
\end{figure}
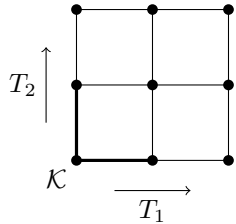

These examples show that our abstract setting naturally covers graphs endowed with a free, cocompact action of a discrete translation group. In the rest of the paper we will mainly work with quantum graphs $\mathcal{G}$ carrying a free, cocompact action of $\mathbb{Z}^{d}$, $d\ge 1$, by graph automorphisms, and we will refer to such graphs as $\mathbb{Z}^{d}$-periodic quantum graphs. The chain, decorated chain, ladder and strip above correspond to the case $d=1$, while the square lattice in Figure~\ref{fig:square-lattice} is a basic example for $d=2$.
	
	\subsection{The associated quadratic form}
	
	Following the approach in \cite{MR3934110}, we define the space
\begin{equation}\label{eq-2.5}
	Y= \big[ L^{2}\big(\mathcal{G},\mathbb{C}^{2}\big),\,\operatorname{dom}(\mathcal{D}) \big]_{1/2},
\end{equation}
namely, the real interpolation space of order $\frac{1}{2}$ between $L^{2}$ and the domain of the Dirac operator $\mathcal{D}$, where $\operatorname{dom}(\mathcal D)$ is endowed with the graph norm.

Since $\operatorname{dom}(\mathcal{D})$ is continuously embedded into the edgewise Sobolev space
\[
H^{1}\big(\mathcal{G},\mathbb{C}^{2}\big)
= \bigoplus_{e\in E} H^{1}\big(I_{e},\mathbb{C}^{2}\big),
\]
interpolation theory yields
\[
Y \hookrightarrow  H^{1/2}\big(\mathcal{G},\mathbb{C}^{2}\big)
:= \bigoplus_{e\in E} H^{1/2}\big(I_{e},\mathbb{C}^{2}\big)
= \big[ L^{2}\big(\mathcal{G},\mathbb{C}^{2}\big),\, H^{1}\big(\mathcal{G},\mathbb{C}^{2}\big)\big]_{1/2},
\]
where $Y$ is endowed with the interpolation norm defined in \eqref{eq-2.7} below and
$ H^{1/2}(\mathcal{G},\mathbb{C}^{2})$ with its natural norm.
Using one-dimensional fractional Sobolev embeddings on each edge and the periodic structure of $\mathcal{G}$, we obtain
\[
Y \hookrightarrow L^{p}\big(\mathcal{G},\mathbb{C}^{2}\big)
\quad\text{for every }2\le p<\infty.
\]
Moreover, for every fixed compact subgraph $\mathcal K\subset\mathcal G$, the embedding
\[
Y \hookrightarrow L^{p}\big(\mathcal{K},\mathbb{C}^{2}\big)
\]
is compact for every $2\le p<\infty$.

Next, we claim that
\begin{equation}\label{eq-2.6}
	\operatorname{dom}\big(\mathcal{Q}_{\mathcal{D}}\big) = Y,
\end{equation}
where $\mathcal{Q}_{\mathcal{D}}$ denotes the (closed) quadratic form associated with the self-adjoint operator $\mathcal{D}$, defined on the form domain $D(|\mathcal D|^{1/2})$.

To prove \eqref{eq-2.6}, we use the spectral theorem in \cite{MR493419} in the following form.

	\begin{Thm}\label{theo-2.1}
		Let $H$ be a self-adjoint operator on a separable Hilbert space $\mathcal{H}$
		with domain $\operatorname{dom}(H)$.
		Then there exist a \(\sigma\)-finite measure space \((M,\mu)\), a unitary operator
		\(U:\mathcal H\to L^{2}(M,\mu)\), and a real-valued measurable function $f$ on $M$, finite almost everywhere, such that:
		\begin{itemize}
			\item[(1)] $\psi\in\operatorname{dom}(H)$ if and only if $f\,U\psi\in L^{2}(M,\mu)$;
			\item[(2)] if $\varphi\in U(\operatorname{dom}(H))$, then
			\[
			(UHU^{-1}\varphi)(m) = f(m)\,\varphi(m)
			\quad\text{for almost every }m\in M.
			\]
		\end{itemize}
	\end{Thm}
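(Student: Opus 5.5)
The plan is to reduce the statement to the spectral theorem for a \emph{bounded} normal operator, and then to read off the unbounded multiplication operator. Set $R:=(H+i)^{-1}$. Since $H$ is self-adjoint, $\pm i\notin\sigma(H)$, so $R$ is a bounded normal operator on $\mathcal H$ with $\|R\|\le 1$. Moreover $\ker R=\{0\}$ and $\operatorname{ran}R=\operatorname{dom}(H)$.

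\textbf{Cyclic decomposition.} Using separability of $\mathcal H$, I will decompose $\mathcal H=\bigoplus_{n\in N}\mathcal H_n$ with $N$ countable. Each $\mathcal H_n$ is the closure of $\{p(R,R^{*})\varphi_n\}$, where $p$ ranges over polynomials and $\varphi_n$ is a unit cyclic vector. The construction is a Zorn/Gram--Schmidt argument applied to a countable dense sequence. Each $\mathcal H_n$ reduces $R$, and hence also reduces $H$.

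\textbf{Diagonalising each piece.} On each $\mathcal H_n$, the map $p\mapsto\langle p(R,R^*)\varphi_n,\varphi_n\rangle$ is a positive functional on $C(\sigma(R))$, by the continuous functional calculus. The Riesz--Markov theorem then gives a finite Borel measure $\nu_n$ on $\sigma(R)\subset\mathbb C$. The assignment $p(R,R^*)\varphi_n\mapsto p(z,\bar z)$ extends to a unitary $U_n:\mathcal H_n\to L^2(\sigma(R),\nu_n)$ satisfying $U_nRU_n^{-1}=$ multiplication by $z$. I then take $M=\bigsqcup_n\sigma(R)\times\{n\}$ and $\mu=\sum_n\nu_n$; this measure is $\sigma$-finite, and in fact finite since $\nu_n(M)=\|\varphi_n\|^2=1$ after weighting by $2^{-n}$ if desired. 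Setting $U=\bigoplus_n U_n$ gives a unitary with $URU^{-1}=g\cdot$, where $g(z,n)=z$.

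\textbf{Passing back to $H$.} Because $\ker R=\{0\}$, we get $\mu(\{g=0\})=0$. I therefore define $f:=g^{-1}-i$, which is real-valued and finite $\mu$-a.e. It is real because $\sigma(R)$ lies on the circle $\{z:|z+\tfrac{i}{2}|=\tfrac12\}$, the image of $\mathbb R$ under $\lambda\mapsto(\lambda+i)^{-1}$, so $\nu_n$ is carried there.

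For (1), suppose first $\psi\in\operatorname{dom}(H)$. Then $\psi=R\chi$ with $\chi=(H+i)\psi$, so $U\psi=g\,U\chi$. It follows that $fU\psi=(1-ig)U\chi\in L^2$, since $g$ is bounded. Conversely, if $fU\psi\in L^2$, then $\eta:=(f+i)U\psi\in L^2$, and $U\psi=g\eta$. This gives $\psi=RU^{-1}\eta\in\operatorname{dom}(H)$.

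For (2), the same computation gives $U(H+i)\psi=\eta=(f+i)U\psi$, hence $UHU^{-1}\varphi=f\varphi$.

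The main obstacle I expect is the careful handling of the domain in (1), together with verifying that $f$ is real and finite a.e. Both reduce to the facts $\ker R=\{0\}$ and $\sigma(R)\subset\{(\lambda+i)^{-1}:\lambda\in\mathbb R\}\cup\{0\}$, which I will check first using self-adjointness of $H$. The cyclic decomposition itself is routine.
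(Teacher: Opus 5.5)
Your proof is correct and is essentially the standard argument: the paper does not prove Theorem~\ref{theo-2.1} but quotes it from Reed--Simon \cite{MR493419}, whose proof likewise represents the bounded normal resolvent $(H+i)^{-1}$ as multiplication by some $g$, uses $\ker(H+i)^{-1}=\{0\}$ to set $f=g^{-1}-i$, and derives (1)--(2) from $\operatorname{dom}(H)=\operatorname{ran}(H+i)^{-1}$ exactly as you do. Your deviations are harmless: you rebuild the bounded normal spectral theorem from cyclic subspaces and Riesz--Markov, and you get reality of $f$ explicitly from $\sigma(R)\subset\{|z+\tfrac{i}{2}|=\tfrac12\}$; also, with $\|\varphi_n\|=1$ and infinitely many pieces, $\mu$ is only $\sigma$-finite (normalise $\|\varphi_n\|=2^{-n}$ for a finite measure), which is all the statement requires.
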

	
	The above theorem states that $H$ is unitarily equivalent to the multiplication operator
	by $f$ (which we still denote by $f$) on the space $L^{2}(M,\mu)$, whose domain is
	\[
	\operatorname{dom}(f)
	= \big\{\varphi\in L^{2}(M,\mu): f(\cdot)\,\varphi(\cdot)\in L^{2}(M,\mu)\big\},
	\]
	endowed with the graph norm
	\[
	\|\varphi\|_{1}^{2}
	= \int_{M} \big(1+f(m)^{2}\big)\,|\varphi(m)|^{2}\,d\mu(m).
	\]
	Since $f$ is a multiplication operator, the quadratic form associated with $H$ (or equivalently with $|H|$)
	has form domain
	\[
	\operatorname{dom}(\mathfrak{q}_{H})
	= \big\{\varphi\in L^{2}(M,\mu): \sqrt{|f(\cdot)|}\,\varphi(\cdot)\in L^{2}(M,\mu)\big\}.
	\]
	We will prove below that this coincides with the interpolation space of order $\frac{1}{2}$ between
	$L^{2}(M,\mu)$ and $\operatorname{dom}(f)$, which will imply \eqref{eq-2.6}.
	
	Consider the Hilbert spaces $\mathcal{H}_{0}=L^{2}(M,\mu)$, with norm
	$\|x\|_{0}=|x|_{L^{2}(M,\mu)}$, and $\mathcal{H}_{1}=\operatorname{dom}(f)$, with norm $\|\cdot\|_{1}$ as above. Then $\mathcal{H}_{1}\subset\mathcal{H}_{0}$, and the squared norm
	$\|x\|_{1}^{2}$ defines a densely defined quadratic form on $\mathcal{H}_{0}$, represented by
	\[
	\|x\|_{1}^{2} = \big\langle (1+f(\cdot)^{2})x,x\big\rangle_{0},
	\]
	where $\langle\cdot,\cdot\rangle_{0}$ is the scalar product of $\mathcal{H}_{0}$.
	
	We now introduce the quadratic version of Peetre's $K$-functional:
	\[
	K(t,x)
	= \inf\Big\{\|x_{0}\|_{0}^{2} + t\|x_{1}\|_{1}^{2}:
	x = x_{0}+x_{1},\ x_{0}\in\mathcal{H}_{0},\ x_{1}\in\mathcal{H}_{1}\Big\},
	\]
	for $t>0$ and $x\in\mathcal{H}_{0}$.
	According to standard arguments (see, e.g., \cite{MR4019467} and the references therein),
	the intermediate spaces
	\[
	\mathcal{H}_{\theta}= [\mathcal{H}_{0},\mathcal{H}_{1}]_{\theta} \subset \mathcal{H}_{0},
	\qquad 0<\theta<1,
	\]
	are given by those $x\in\mathcal{H}_{0}$ such that the following quantity is finite:
	\begin{equation}\label{eq-2.7}
		\|x\|_{\theta}^{2}
		= \int_{0}^{\infty} \big(t^{-\theta} K(t,x)\big)\,\frac{dt}{t} < \infty.
	\end{equation}
	In particular, for $\theta=\frac{1}{2}$ we obtain the interpolation space
	$\mathcal{H}_{1/2} = [\mathcal{H}_{0},\mathcal{H}_{1}]_{1/2}$ endowed with the norm $\|\cdot\|_{1/2}$ defined by \eqref{eq-2.7}.
	
	Next, we prove the following representation formula.
	
	\begin{Pro}\label{pro-2.1}
		For every $\theta\in(0,1)$ there exists a constant $C_{\theta}>0$ such that
		\[
		\|x\|_{\theta}^{2}
		= C_{\theta}\,\big\langle \big(1+f(\cdot)^{2}\big)^{\theta}x,x\big\rangle_{0},
		\qquad x\in\mathcal{H}_{\theta},
		\]
		where $\mathcal{H}_{\theta}=[\mathcal{H}_{0},\mathcal{H}_{1}]_{\theta}$.
	\end{Pro}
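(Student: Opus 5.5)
The plan is to compute the quadratic $K$-functional pointwise in the multiplication-operator picture provided by Theorem~\ref{theo-2.1}, and then integrate in $t$ with the weight $t^{-\theta-1}$. Write $w(m)=1+f(m)^{2}\ge 1$. For $x\in\mathcal H_0$ and any splitting $x=x_0+x_1$ with $x_1\in\mathcal H_1$ we have
\[
\|x_0\|_0^2+t\|x_1\|_1^2=\int_M\big(|x(m)-x_1(m)|^2+t\,w(m)|x_1(m)|^2\big)\,d\mu(m).
\]
Because both terms are local in $m$, the infimum can be computed pointwise. For fixed $m$, minimizing the scalar quadratic $|\xi-\eta|^2+tw|\eta|^2$ over $\eta\in\mathbb C$ gives $\eta=\xi/(1+tw)$ and minimal value $\frac{tw}{1+tw}|\xi|^2$. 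Hence
\[
K(t,x)=\int_M\frac{t\,w(m)}{1+t\,w(m)}\,|x(m)|^2\,d\mu(m).
\]

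The main (though mild) obstacle is justifying this formula. The lower bound is immediate, since the integrand dominates the pointwise minimum for every admissible splitting. For the upper bound we take the explicit choice $x_1=x/(1+tw)$. It lies in $\mathcal H_1$ because $w|x_1|^2\le |x|^2/t^{2}\cdot \frac{t^2w}{(1+tw)^2}\cdot w\le |x|^2/t\cdot\frac{tw}{(1+tw)^2}\,w$, and more simply $w^{2}|x|^2/(1+tw)^2\le |x|^2/t^2$, which is integrable. Measurability is clear. So the infimum is attained and the formula holds for all $x\in\mathcal H_0$.

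Next we insert this formula into \eqref{eq-2.7} and apply Tonelli, since everything is nonnegative:
\[
\|x\|_\theta^2=\int_M|x(m)|^2\int_0^\infty t^{-\theta}\frac{tw(m)}{1+tw(m)}\frac{dt}{t}\,d\mu(m).
\]
In the inner integral we substitute $s=tw(m)$. This gives
\[
w(m)^{\theta}\int_0^\infty\frac{s^{-\theta}}{1+s}\,ds=C_\theta\,w(m)^\theta,
\]
where $C_\theta=\pi/\sin(\pi\theta)$ is finite and positive precisely because $0<\theta<1$: the integrand behaves like $s^{-\theta}$ at $0$ and like $s^{-1-\theta}$ at $\infty$.

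Therefore $\|x\|_\theta^2=C_\theta\int_M w^\theta|x|^2\,d\mu=C_\theta\langle(1+f^2)^\theta x,x\rangle_0$. In particular, $x\in\mathcal H_\theta$ if and only if this quantity is finite, which identifies $\mathcal H_\theta$ with the weighted space. For $\theta=\tfrac12$ it yields $\{\varphi:\ (1+f^2)^{1/4}\varphi\in L^2\}=\{\varphi:\ \sqrt{|f|}\,\varphi\in L^2\}$, as needed for \eqref{eq-2.6}.
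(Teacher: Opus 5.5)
Your proof is correct and follows essentially the same route as the paper: with $D=1+f^2$, identify $K(t,x)=\langle \frac{tD}{1+tD}x,x\rangle_0$, then integrate in $t$ and rescale by $s=tD$ to obtain $C_\theta=\int_0^\infty s^{-\theta}(1+s)^{-1}\,ds$. Your pointwise minimization in the multiplication picture justifies the $K$-functional formula more cleanly than the paper's first-variation argument, which tacitly assumes that a minimizing decomposition exists: the lower bound comes from the scalar minimum, and the upper bound from the explicit choice $x_1=x/(1+tw)$, for which $w|x_1|^2\le |x|^2/t$ directly.
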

	
	\begin{proof}
		For simplicity set
		\[
		D= 1+f(\cdot)^{2}.
		\]
		Then $D$ is a positive self-adjoint operator on $\mathcal{H}_{0}$ with dense domain, and its
		positive square root $D^{1/2}$ has a dense domain $\mathbb{D}:=\operatorname{dom}(D^{1/2})$ in $\mathcal{H}_{0}$.
		
		We first identify the $K$-functional. For $x\in\mathbb{D}$ and $t>0$, there exists a unique
		decomposition
		\[
		x = x_{0,t} + x_{1,t},
		\]
		with $x_{0,t},x_{1,t}\in\mathbb{D}$, such that
		\begin{equation}\label{eq-2.9}
			K(t,x) = \|x_{0,t}\|_{0}^{2} + t\|x_{1,t}\|_{1}^{2}.
		\end{equation}
		For any $y\in\mathbb{D}$ and $s\in\mathbb{R}$, consider the perturbed decomposition
		\[
		x = (x_{0,t}+sy) + (x_{1,t}-sy).
		\]
		By the minimality of $(x_{0,t},x_{1,t})$ in the definition of $K(t,x)$, the function
		\[
		s\mapsto \|x_{0,t}+sy\|_{0}^{2} + t\|x_{1,t}-sy\|_{1}^{2}
		\]
		has vanishing derivative at $s=0$, that is,
		\[
		\frac{d}{ds}\Big(\|x_{0,t}+sy\|_{0}^{2}
		+ t\|x_{1,t}-sy\|_{1}^{2}\Big)\Big|_{s=0} = 0.
		\]
		Using $\|x\|_{1}^{2}=\|D^{1/2}x\|_{0}^{2}$ and the self-adjointness of $D$, this yields
		\[
		\big\langle x_{0,t}-tDx_{1,t},y\big\rangle_{0} = 0
		\quad\forall\,y\in\mathbb{D},
		\]
		hence
		\[
		x_{0,t} = tDx_{1,t}.
		\]
		Since $x = x_{0,t}+x_{1,t}$, we obtain
		\begin{equation}\label{eq-2.10}
			x_{1,t} = (1+tD)^{-1}x,
			\qquad
			x_{0,t} = tD(1+tD)^{-1}x.
		\end{equation}
		Substituting \eqref{eq-2.10} into \eqref{eq-2.9} and using the functional calculus for $D$, we
		get, first for $x\in\mathbb{D}$ and then by density for all $x\in\mathcal{H}_{0}$,
		\begin{equation}\label{eq-2.8}
			K(t,x) = \big\langle \frac{tD}{1+tD}x,x\big\rangle_{0}
			\qquad\forall\,t>0.
		\end{equation}
		
		We now compute the interpolation norm. By \eqref{eq-2.7} and \eqref{eq-2.8},
		\begin{equation}\label{eq-2.11}
			\begin{aligned}
				\|x\|_{\theta}^{2}
				&= \int_{0}^{\infty} t^{-\theta} K(t,x)\,\frac{dt}{t} \\
				&= \int_{0}^{\infty} t^{-\theta}\big\langle \tfrac{tD}{1+tD}x,x\big\rangle_{0}\,\frac{dt}{t} \\
				&= \int_{0}^{\infty} t^{-\theta}\big\langle \tfrac{D}{1+tD}x,x\big\rangle_{0}\,dt \\
				&= \big\langle D\Big(\int_{0}^{\infty} \frac{dt}{t^{\theta}(1+tD)}\Big)x,x\big\rangle_{0}.
			\end{aligned}
		\end{equation}
		For $d>0$, set
		\[
		g(d)= \int_{0}^{\infty} \frac{dt}{t^{\theta}(1+td)}.
		\]
		By the change of variables $s=td$ we obtain
		\[
		g(d) = d^{\theta-1} \int_{0}^{\infty} \frac{ds}{s^{\theta}(1+s)}
		= C_{\theta}\,d^{\theta-1},
		\]
		where
		\[
		C_{\theta}= \int_{0}^{\infty} \frac{ds}{s^{\theta}(1+s)} \in (0,+\infty)
		\]
		depends only on $\theta$. By the functional calculus,
		\[
		\int_{0}^{\infty} \frac{dt}{t^{\theta}(1+tD)}
		= g(D) = C_{\theta}\,D^{\theta-1},
		\]
		and \eqref{eq-2.11} becomes
		\[
		\|x\|_{\theta}^{2}
		= \big\langle D\,g(D)x,x\big\rangle_{0}
		= C_{\theta}\big\langle D^{\theta}x,x\big\rangle_{0}.
		\]
		This proves the claim.
	\end{proof}
	
	It is worth noting that, according to the previous proposition, if we take \(\theta=\frac12\),
	then \(\mathcal H_{1/2}\) coincides with the form domain of the operator \(H\) (equivalently of \(|H|\)).
	Indeed,
	\[
	x\in\mathcal H_{1/2}
	\quad\Longleftrightarrow\quad
	\langle D^{1/2}x,x\rangle_0
	= \int_M \sqrt{1+f(m)^{2}}\,|x(m)|^{2}\,d\mu(m) < \infty.
	\]
	Since
	\[
	|f(m)| \le \sqrt{1+f(m)^{2}} \le 1+|f(m)| \quad\text{for all }m\in M,
	\]
	the above condition is equivalent to
	\[
	\int_M |f(m)|\,|x(m)|^{2}\,d\mu(m) < \infty,
	\]
	which is precisely the condition \(x\in D(|H|^{1/2})\), i.e. the form domain of \(H\).
	Therefore, setting \(H=\mathcal D\) and \(\mathcal H=L^{2}(\mathcal G,\mathbb C^{2})\), we see that
	\eqref{eq-2.5} is exactly the form domain of \(\mathcal D\), and hence \eqref{eq-2.6} holds.

Finally, in view of \eqref{eq-2.6} we denote the form domain of $\mathcal D$ by $Y$.
Since we work with real Fr\'echet derivatives,
we regard $L^{2}(\mathcal G,\mathbb C^{2})$ and $Y$ as real Hilbert spaces endowed with the real inner product
\[
(u,v)_{2}
= \operatorname{Re}\int_{\mathcal G}\langle u(x),v(x)\rangle_{\mathbb C^{2}}\,dx.
\]
Let $\operatorname{sgn}(\mathcal D)$ be the bounded self-adjoint operator given by the functional calculus,
\[
\operatorname{sgn}(\mathcal D)=\mathcal D|\mathcal D|^{-1}\quad \text{on }(\ker\mathcal D)^{\perp},
\qquad
\operatorname{sgn}(\mathcal D)=0\ \text{on }\ker\mathcal D.
\]
We define the (closed) symmetric bilinear form on $Y$ by
\[
\mathcal{Q}_{\mathcal{D}}(u,v)
= \big(|\mathcal D|^{1/2}u,\ \operatorname{sgn}(\mathcal D)\,|\mathcal D|^{1/2}v\big)_{2},
\qquad
\mathcal{Q}_{\mathcal{D}}(u)=\mathcal{Q}_{\mathcal{D}}(u,u).
\]
If $u,v\in \operatorname{dom}(\mathcal D)$, then
\[
\mathcal{Q}_{\mathcal{D}}(u,v)
= \operatorname{Re}\int_{\mathcal G}\langle u,\mathcal D v\rangle_{\mathbb C^{2}}\,dx,
\]
so the above definition extends the usual integral identity to the whole form domain.
	
	\subsection{Critical point theorems}
In this section we recall some abstract critical point theory following \cite{MR2255874}.
Let $Z$ be a real Banach space with a topological direct sum decomposition $Z = M \oplus N$,
where $M$ and $N$ are closed subspaces and the corresponding projections $P_{M},P_{N}$ onto $M,N$ are bounded.

For a functional $\Phi\in C^{1}(Z,\mathbb{R})$ we write
\[
\Phi_{a} = \{u\in Z : \Phi(u)\ge a\},\quad
\Phi^{b}= \{u\in Z : \Phi(u)\le b\},\quad
\Phi_{a}^{b} = \Phi_{a}\cap\Phi^{b}.
\]

\begin{Def}\label{def-2.3}
A sequence $(u_{n})\subset Z$ is called a $(C)_{c}$-sequence if
$\Phi(u_{n})\to c$ and $(1+\|u_{n}\|)\,\|\Phi'(u_{n})\|_{Z^{*}}\to 0$.
Moreover, $\Phi$ is said to satisfy the $(C)_{c}$-condition if any
$(C)_{c}$-sequence has a convergent subsequence.
\end{Def}

\begin{Def}\label{def-2.4}
A set $\mathcal{A}\subset Z$ is said to be a $(C)_{c}$-attractor if for any
$\varepsilon,\delta>0$ and any $(C)_{c}$-sequence $(u_{n})$ there exists
$n_{0}$ such that
\[
u_{n} \in U_{\varepsilon}\big(\mathcal{A}\cap\Phi_{c-\delta}^{c+\delta}\big)
\quad\text{for all }n\ge n_{0},
\]
where $U_{\varepsilon}(\cdot)$ denotes the $\varepsilon$-neighbourhood in $Z$.
Given an interval $I\subset\mathbb{R}$, $\mathcal{A}$ is said to be a
$(C)_{I}$-attractor if it is a $(C)_{c}$-attractor for every $c\in I$.
\end{Def}

From now on we assume that $M$ is separable and reflexive, and we fix a countable dense subset
$\mathcal{S}\subset M^{*}$. For each $s\in\mathcal{S}$ we define a seminorm on $Z$ by
\[
p_{s}:Z\to\mathbb{R},\qquad
p_{s}(u) = |s(P_M u)| + \|P_N u\|.
\]
We denote by $\mathcal{T}_{\mathcal{S}}$ the topology on $Z$ induced by the family
$\{p_{s}\}_{s\in\mathcal{S}}$. We also denote by $\mathcal{T}_{w}$ the weak topology on $Z$
and by $\mathcal{T}_{w^{*}}$ the weak\(*\)-topology on $Z^{*}$.

	We shall use the following assumptions on $\Phi$:
	
	\begin{itemize}
		\item[(\(\Phi_{0}\))] For any $c\in\mathbb{R}$, the set $\Phi_{c}$ is
		$\mathcal{T}_{\mathcal{S}}$-closed and
		\[
		\Phi'\colon (\Phi_{c},\mathcal{T}_{\mathcal{S}})\to (Z^{*},\mathcal{T}_{w^{*}})
		\]
		is continuous.
		
		\item[(\(\Phi_{1}\))] For any $c>0$ there exists $\zeta>0$ such that
		\[
		\|u\| < \zeta\,\|P_{N}u\|\quad\text{for all }u\in\Phi_{c}.
		\]
		
		\item[(\(\Phi_{2}\))] There exists $\rho>0$ such that
		\[
		\eta= \inf\Phi(S_{\rho}^{N}) > 0,\qquad
		S_{\rho}^{N} := \{u\in N : \|u\|=\rho\}.
		\]
		
		\item[(\(\Phi_{3}\))] There exists a finite-dimensional subspace $N_{0}\subset N$
		and $R>\rho$ such that, setting $E_{0}=M\oplus N_{0}$ and
		\[
		B_{0}= \{u\in E_{0} : \|u\|\le R\},
		\]
		we have $b'= \sup\Phi(E_{0})<\infty$ and
		\[
		\sup \Phi(E_{0}\setminus B_{0}) < \inf \Phi(B_{\rho}\cap N),
		\]
		where $B_{\rho}= \{u\in Z : \|u\|\le \rho\}$.
		
		\item[(\(\Phi_{4}\))] There exist an increasing sequence \(N_{n}\subset N\) of
		finite-dimensional subspaces and a sequence \((R_{n})\) of positive numbers such
		that, setting \(E_{n} = M\oplus N_{n}\) and \(B_{n} = B_{R_{n}}\cap E_{n}\),
		\[
		\sup\Phi(E_{n}) < \infty
		\quad\text{and}\quad
		\sup\Phi(E_{n}\setminus B_{n}) < \inf\Phi(B_{\rho}\cap N).
		\]
	
		\item[(\(\Phi_{5}\))] One of the following holds:
		\begin{itemize}
			\item[(i)] for any interval $I\subset (0,\infty)$ there exists a $(C)_{I}$-attractor
			$\mathcal{A}$ such that $P_{N}\mathcal{A}$ is bounded and
			\[
			\inf\big\{\|P_{N}(u-v)\| : u,v\in\mathcal{A},\,P_{N}(u-v)\neq 0\big\} > 0;
			\]
			\item[(ii)] $\Phi$ satisfies the $(C)_{c}$-condition for all $c>0$.
		\end{itemize}
	\end{itemize}

	\begin{Thm}\label{theo-2.2}
Assume $(\Phi_{0})$--$(\Phi_{2})$ and suppose that there exist $R>\rho>0$ and $e\in N$
with $\|e\|=1$ such that
\[
\sup\Phi(\partial Q) < \eta,
\]
where $\eta=\inf\Phi(S_{\rho}^{N})>0$ is given by $(\Phi_2)$ and
\[
Q := \{u = x+te : x\in M,\ t\ge 0,\ \|u\|\le R\}.
\]
Then $\Phi$ has a $(C)_{c}$-sequence with
\[
\eta \le c \le \sup\Phi(Q).
\]
\end{Thm}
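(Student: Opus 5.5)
The plan is to argue by contradiction with a deformation lemma adapted to the topology $\mathcal T_{\mathcal S}$, combined with an intersection (linking) lemma proved via Leray--Schauder type degree theory, following the scheme of \cite{MR2255874}. Set $\kappa:=\sup\Phi(Q)$; since $Q\subset M\oplus\mathbb R e$ and $\Phi(0)\le\sup\Phi(\partial Q)<\eta$, we have $\eta\le\kappa$ whenever $Q\cap S^N_\rho\neq\emptyset$ (the point $\rho e$ lies in $Q$ because $R>\rho$). Suppose, contrary to the claim, that there is no $(C)_c$-sequence with $c\in[\eta,\kappa]$. A compactness-in-$c$ argument then gives some $\varepsilon\in(0,(\eta-\sup\Phi(\partial Q))/2)$ such that
\[
(1+\|u\|)\,\|\Phi'(u)\|_{Z^*}\ge\varepsilon\qquad\text{for all }u\in\Phi^{-1}\big([\eta-\varepsilon,\kappa+\varepsilon]\big).
\]

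\emph{Step 1 (deformation).} On $\Phi_{\eta-\varepsilon}$ I would construct a pseudo-gradient vector field $X$ that is locally Lipschitz and \emph{$\mathcal T_{\mathcal S}$-locally Lipschitz}, and \emph{$\mathcal T_{\mathcal S}$-locally finite-dimensional}, meaning each point has a $\mathcal T_{\mathcal S}$-neighbourhood on which $X$ takes values in a finite-dimensional subspace. It should satisfy $\|X(u)\|\le 2(1+\|u\|)$ and $\Phi'(u)X(u)\ge c_0>0$ on the strip $\Phi^{-1}([\eta-\varepsilon,\kappa+\varepsilon])$. This is where $(\Phi_0)$ enters. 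For each $u$ in the strip, pick $w_u$ with $\Phi'(u)w_u>\|\Phi'(u)\|\,\|w_u\|/2$. By the $\mathcal T_{\mathcal S}\to\mathcal T_{w^*}$ continuity of $\Phi'$, the inequality $\Phi'(v)w_u>\cdots$ persists on a $\mathcal T_{\mathcal S}$-open neighbourhood $U_u$ of $u$ inside $\Phi_{\eta-\varepsilon}$. The topology $\mathcal T_{\mathcal S}$ is metrizable on bounded sets, hence paracompact there, so we may take a $\mathcal T_{\mathcal S}$-locally finite Lipschitz partition of unity subordinate to $\{U_u\}$ and glue the constant fields $w_u$. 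Cut-off functions localise $X$ to the strip. Because of the Cerami factor, the growth $\|X(u)\|\lesssim 1+\|u\|$ makes the flow $\eta(t,u)$ global, with $\|\eta(t,u)\|$ growing at most exponentially in $t$. Running the flow for time $T\sim(\kappa-\eta+2\varepsilon)/c_0$ then yields a map $h=\eta(T,\cdot)$ with the following properties: $h(\Phi^{\kappa+\varepsilon})\subset\Phi^{\eta-\varepsilon}$; $h=\mathrm{id}$ on $\Phi^{\eta-2\varepsilon}$; $h$ is $\mathcal T_{\mathcal S}$-continuous; and every point has a $\mathcal T_{\mathcal S}$-neighbourhood $W$ such that $h(t,\cdot)-\mathrm{id}$ maps $[0,1]\times W$ into a finite-dimensional subspace. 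Here $(\Phi_1)$ is used to keep $P_N$ of the flow lines controlled, so that the deformation stays inside the region where the $\mathcal T_{\mathcal S}$-estimates are uniform.

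\emph{Step 2 (intersection lemma).} Next I would show that any such admissible deformation $h$ with $h|_{\partial Q}=\mathrm{id}$ satisfies $h(Q)\cap S^N_\rho\neq\emptyset$; the hypothesis $h|_{\partial Q}=\mathrm{id}$ holds because $\sup\Phi(\partial Q)<\eta-2\varepsilon$. To prove it, consider the map
\[
u=x+te\ \longmapsto\ \big(P_M h(u),\ \|P_N h(u)\|\,\big)\in M\times\mathbb R,
\]
and look for a zero of $(P_Mh(u),\|P_Nh(u)\|-\rho)$ in the interior of $Q$. The $\mathcal T_{\mathcal S}$-continuity and the local finite-dimensionality of $h-\mathrm{id}$ make this map a compact perturbation of the identity after restricting to the bounded set $Q$. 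The weak topology on bounded subsets of the separable reflexive space $M$ is compatible with $\mathcal T_{\mathcal S}$, so a Leray--Schauder (or Galerkin-approximated Brouwer) degree is defined. It equals the degree of the identity homotopy, which is $1$ since $\rho<R$, and therefore a zero exists.

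\emph{Step 3 (contradiction).} Take $u\in Q$ with $h(u)\in S^N_\rho$. Then $\Phi(h(u))\ge\eta$ by $(\Phi_2)$, while also $\Phi(h(u))\le\eta-\varepsilon$ because $u\in Q\subset\Phi^{\kappa}$. This is a contradiction, and it proves the existence of a $(C)_c$-sequence with $\eta\le c\le\kappa$.

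I expect the main obstacle to be Step 1. The difficulty is building a vector field that is simultaneously norm-Lipschitz for the flow and $\mathcal T_{\mathcal S}$-continuous and locally finite-dimensional, which is what makes the degree in Step 2 well-defined. Everything else is a standard linking argument.
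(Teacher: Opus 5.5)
The paper does not prove this statement; it quotes it from \cite{MR2255874}. Your overall architecture is the right one: a $\mathcal{T}_{\mathcal{S}}$-admissible pseudo-gradient flow, plus a degree-theoretic intersection lemma on the $\mathcal{T}_{\mathcal{S}}$-compact set $Q$. However, Step~1 asks for a deformation that cannot exist, and Step~2 rests on it.

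By $(\Phi_0)$, $\Phi$ is only $\mathcal{T}_{\mathcal{S}}$-upper semicontinuous. The superlevel sets $\Phi_c$ are $\mathcal{T}_{\mathcal{S}}$-closed, but the sublevel sets $\Phi^{c}$ are not. So a cut-off built from the values of $\Phi$ is not $\mathcal{T}_{\mathcal{S}}$-continuous, and in fact the properties you list for $h$ are inconsistent. Suppose $u_n\to u$ in $\mathcal{T}_{\mathcal{S}}$ with $u_n\in Q\cap\Phi^{\eta-2\varepsilon}$ and $\Phi(u)>\eta-\varepsilon$. Then $\mathcal{T}_{\mathcal{S}}$-continuity together with $h=\mathrm{id}$ on $\Phi^{\eta-2\varepsilon}$ forces $h(u)=\lim h(u_n)=u$. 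This contradicts $h(u)\in\Phi^{\eta-\varepsilon}$.

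Such sequences are the typical situation. Take $\Phi=\frac12(\|y\|^2-\|x\|^2)-\Psi$ on a Hilbert space with $\Psi$ weakly l.s.c., and set $u_n=u+z_n$ with $z_n\in M$, $z_n\rightharpoonup 0$, $\|z_n\|=r$ and $\|u\|^2+r^2<R^2$. Then $u_n\in Q$, $u_n\to u$ in $\mathcal{T}_{\mathcal{S}}$, and $\limsup\Phi(u_n)\le\Phi(u)-r^2/2$. Put differently, a field living only on $\Phi_{\eta-\varepsilon}$ and frozen below gives a flow that is discontinuous on $(Q,\mathcal{T}_{\mathcal{S}})$. So neither $h|_{\partial Q}=\mathrm{id}$ nor the degree in Step~2 is available as you set them up.

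The fix is to stop freezing low levels and to use an energy inequality instead. This is how the admissible homotopies in the Kryszewski--Szulkin and Bartsch--Ding linking theorems are defined.

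\begin{itemize}
\item Fix $a<\eta-\varepsilon$. For $u$ in the strip, scale $w_u$ so that $\Phi'(u)w_u>\varepsilon$.
\item Choose a $\mathcal{T}_{\mathcal{S}}$-open set $U_u\ni u$ in $Z$ with $\Phi'(v)w_u>\varepsilon/2$ for all $v\in U_u\cap\Phi_a$. This is exactly what $(\Phi_0)$ gives.
\item Glue by a partition of unity on the $\mathcal{T}_{\mathcal{S}}$-open set $\{\Phi<\kappa+\varepsilon\}$. Cover it by these $U_u$ together with $\{\Phi<\eta-\varepsilon\}$, and assign the zero vector to the latter.
\end{itemize}

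The glued field $\chi$ satisfies $\Phi'\chi\ge 0$ on $\Phi_a$ and $\Phi'\chi>\varepsilon/2$ on $\Phi_{\eta-\varepsilon}$ inside the domain. It may increase $\Phi$ below level $a$. Its flow $\sigma$ is $\mathcal{T}_{\mathcal{S}}$-continuous on all of $Q$ and obeys $\Phi(\sigma(t,u))\le\max\{\Phi(u),a\}$; in particular, trajectories from $Q$ stay in $\{\Phi<\kappa+\varepsilon\}$.

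Combined with $\sup\Phi(\partial Q)<\eta$, this inequality gives $\sigma(t,\partial Q)\cap S_\rho^N=\emptyset$ for all $t$. That is the admissibility your degree argument needs, and it replaces $h|_{\partial Q}=\mathrm{id}$. It also still gives $\sigma(T,Q)\subset\Phi^{\eta-\varepsilon}$ for large $T$, so Step~3 goes through.

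There are two smaller omissions.
\begin{itemize}
\item You need $\kappa=\sup\Phi(Q)<\infty$. This holds because $Q$ is $\mathcal{T}_{\mathcal{S}}$-compact and $\Phi$ is $\mathcal{T}_{\mathcal{S}}$-u.s.c.
\item $\mathcal{T}_{\mathcal{S}}$ is metrizable on all of $Z$, since it is generated by countably many seminorms. That, not metrizability on bounded sets, is what justifies the partition of unity on the unbounded strip.
\end{itemize}
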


	\begin{Thm}\label{theo-2.3}
		Assume that $\Phi$ is even with $\Phi(0)=0$ and that $(\Phi_{0})$--$(\Phi_{5})$ are
		satisfied. Then $\Phi$ possesses an unbounded sequence of positive critical values.
	\end{Thm}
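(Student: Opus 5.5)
The plan is to run a Benci-type pseudo-index minimax scheme adapted to the topology $\mathcal T_{\mathcal S}$, in the spirit of \cite{MR2255874}. Fix $a\in(0,\eta)$ with $\eta$ from $(\Phi_2)$. Let $\mathcal G^*$ be the class of odd maps $h\colon\Phi_a\to\Phi_a$ with the following properties:
\begin{itemize}
\item[(a)] $h$ is a homeomorphism for the norm topology and for $\mathcal T_{\mathcal S}$;
\item[(b)] $\Phi(h(u))\ge\Phi(u)$ for all $u\in\Phi_a$;
\item[(c)] every point has a $\mathcal T_{\mathcal S}$-neighbourhood $W$ such that $(\mathrm{id}-h)(W)$ lies in a finite-dimensional subspace of $Z$.
\end{itemize}
For a symmetric $\mathcal T_{\mathcal S}$-closed set $A\subset\Phi_a$ we set
\[
i^*(A)=\min_{h\in\mathcal G^*}\gamma\big(h(A)\cap S^N_\rho\big),
\]
where $\gamma$ is the Krasnoselskii genus. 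We then define
\[
c_k=\inf\{\sup\Phi(A): A\subset\Phi_a \text{ symmetric, } \mathcal T_{\mathcal S}\text{-closed, } i^*(A)\ge k\}.
\]
The goal is to show that each $c_k$ is a critical value, that $\eta\le c_1\le c_2\le\dots<\infty$, and that $c_k\to\infty$.

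\textbf{Bounds on $c_k$.} Since $\Phi\ge\eta$ on $S^N_\rho$, any $A$ with $i^*(A)\ge1$ meets $S_\rho^N$ after deformation, and $h$ does not decrease $\Phi$; hence $c_k\ge\eta>0$. For the upper bound we use $(\Phi_4)$: choose $n$ with $\dim N_n\ge k$ and take $A$ to be (the part in $\Phi_a$ of) $B_n=B_{R_n}\cap E_n$, or more precisely a symmetric $\mathcal T_{\mathcal S}$-closed set built from it.
\begin{itemize}
\item[(1)] By $(\Phi_4)$, $\sup\Phi(E_n\setminus B_n)<\inf\Phi(B_\rho\cap N)$. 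Together with $\Phi(h(u))\ge\Phi(u)$, this means $h$ cannot push points of $\partial B_n$ across $S_\rho^N$.
\item[(2)] Because $h-\mathrm{id}$ is locally finite-dimensional, we can reduce to a finite-dimensional equivariant problem on $M'\oplus N_n$ with $M'\subset M$ finite-dimensional.
\item[(3)] Borsuk--Ulam then gives $\gamma(h(B_n)\cap S^N_\rho)\ge\dim N_n\ge k$.
\end{itemize}
Consequently $c_k\le\sup\Phi(E_n)<\infty$. Monotonicity of $c_k$ in $k$ is immediate from the definition.

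\textbf{Deformation lemma (the main obstacle).} We need an equivariant deformation lemma inside $\mathcal G^*$: if $c>0$ and there is no $(C)_c$-sequence (or, under $(\Phi_5)$(i), away from a neighbourhood of the attractor), then some $h\in\mathcal G^*$ maps $\Phi_{c-\varepsilon}$ into $\Phi_{c+\varepsilon}$. The construction proceeds as follows.
\begin{itemize}
\item[(1)] Using $(\Phi_0)$, for each $u\in\Phi_a$ pick a $\mathcal T_{\mathcal S}$-open neighbourhood and a single vector $w_u$ with $\Phi'(v)w_u>0$ uniformly on that neighbourhood.
\item[(2)] Using $(\Phi_1)$, $\|v\|<\zeta\|P_Nv\|$ on $\Phi_c$, so $\mathcal T_{\mathcal S}$-neighbourhoods are norm-bounded in the relevant sets. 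This lets us control the $(C)$-weight $(1+\|u\|)$.
\item[(3)] Take a $\mathcal T_{\mathcal S}$-locally finite partition of unity and form a pseudo-gradient field that is odd (by symmetrizing), locally Lipschitz and locally finite-dimensional.
\item[(4)] Its flow, run for a bounded time, is then $\mathcal T_{\mathcal S}$-continuous and belongs to $\mathcal G^*$.
\end{itemize}
Constructing this $\mathcal T_{\mathcal S}$-continuous, locally finite-dimensional odd flow is the delicate step, and I expect it to require the most care. I would follow Kryszewski--Szulkin / Bartsch--Ding essentially verbatim.

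\textbf{Conclusion.} Under $(\Phi_5)$(ii) the argument is standard. If $c=c_k=\dots=c_{k+m}$, the critical set $K_c$ is compact and has a neighbourhood of genus $\gamma(K_c)$, and the deformation argument gives $\gamma(K_c)\ge m+1$. Hence either infinitely many distinct $c_k$ occur, or some level carries infinitely many critical points, which is a contradiction. Under $(\Phi_5)$(i) we use the attractor $\mathcal A$ for $I=[c_1,\sup_k c_k]$, assuming this is bounded. Since $P_N\mathcal A$ is bounded and its points are uniformly separated in $N$, the set $P_N\mathcal A$ is finite up to taking $\mathcal T_{\mathcal S}$-neighbourhoods. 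So a small neighbourhood $U$ of $\mathcal A\cap\Phi_{I}$ has finite genus, say $\gamma(U)=p$. The deformation lemma applied outside $U$ gives $c_{k+p}>c_k$ for all large $k$; iterating contradicts boundedness of $(c_k)$. Therefore $c_k\to\infty$, and each $c_k$ is a positive critical value (using the $(C)_c$-sequences plus $(\Phi_0)$ for the limiting argument). This yields an unbounded sequence of positive critical values.
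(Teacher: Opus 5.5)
The paper does not prove this statement. It is quoted from \cite{MR2255874}, where it is established by a Benci-type pseudo-index argument of the kind you sketch: the genus of $h(A)\cap S^N_\rho$ over odd, $\mathcal T_{\mathcal S}$-continuous, locally finite-dimensional maps, a degree/Borsuk--Ulam intersection lemma, and a $\mathcal T_{\mathcal S}$-continuous deformation lemma. Your version, however, has the monotonicity of the admissible class backwards, and this breaks the scheme in two places.

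First, with (b) $\Phi(h(u))\ge\Phi(u)$, the intersection estimate does not follow from $(\Phi_4)$. Knowing $\Phi(u)<\inf\Phi(B_\rho\cap N)$ for $u\in E_n\setminus B_n$, or $\Phi(u)=a<\eta$ on the part $\{\Phi=a\}$ of the boundary of $E_n\cap\Phi_a$, says nothing about whether $h(u)\in S^N_\rho$ when $h$ may raise $\Phi$. Condition $(\Phi_4)$ is designed to be used with $\Phi(h(u))\le\Phi(u)$.

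Second, the $c_k$ are inf--sup values, so to show they are critical you must push sets \emph{down}. You need a deformation $\eta$ with $\eta(\Phi^{c+\varepsilon}\setminus U)\subset\Phi^{c-\varepsilon}$ and $h\circ\eta\in\mathcal G^*$ for every $h\in\mathcal G^*$; this is what gives $i^*(\eta(A))\ge i^*(A)$. Your ascending deformation $\Phi_{c-\varepsilon}\to\Phi_{c+\varepsilon}$ lies in your $\mathcal G^*$ but does not lower $\sup\Phi(A)$. Its inverse does lower it, but the inverse is not in $\mathcal G^*$, so nothing bounds $i^*(h^{-1}(A))$ from below. Your argument for $c_k\ge\eta$ also tacitly uses $\Phi(u)\ge\Phi(h(u))$; there it can be repaired by taking $h=\mathrm{id}$.

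So the class must consist of $\Phi$-nonincreasing odd homeomorphisms of $\Phi_a$, bi-continuous for both the norm and $\mathcal T_{\mathcal S}$, and the deformation lemma must be the descending one.

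Your treatment of $(\Phi_5)$(i) also has gaps.
\begin{enumerate}
\item A bounded, uniformly separated subset of the infinite-dimensional space $N$ need not be finite in any sense; in this very paper, $P^+[\mathcal F,\ell]$ contains all $\mathbb Z^d$-translates. The correct statement is as follows. Take $\delta>0$ smaller than a quarter of the separation constant, and smaller than $\inf\|P_Nu\|$ on $\mathcal A\cap\Phi_{\min I-\delta}$ (positive by $(\Phi_1)$ and $\Phi(0)=0$). Set $X=P_N(\mathcal A\cap\Phi_{\min I-\delta})$. Then the symmetric $\mathcal T_{\mathcal S}$-open set $U=P_N^{-1}\big(U_\delta(X\cup -X)\big)$ is mapped by the odd map $P_N$ into a set admitting an odd locally constant map to $\{\pm1\}$. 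Hence $\gamma(\overline U)\le 1$, which is what you need.
\item The claim ``$c_{k+p}>c_k$ for all large $k$'' does not contradict boundedness of $(c_k)$. You must either obtain a gap that is uniform on compact level intervals, or deform at the limit level $\bar c=\lim c_k$, using an attractor for an interval containing $\bar c$. That pushes a set of pseudo-index $\ge k+p$ below $c_k$ once $c_k>\bar c-\varepsilon$.
\item Under (i), the points of $\mathcal A$ need not be critical. A $(C)_c$-sequence approaching an infinite discrete set need not have a $\mathcal T_{\mathcal S}$-convergent subsequence: its $N$-components may run through infinitely many separated points, as translates do in the application. So ``$(C)_c$-sequences plus $(\Phi_0)$'' does not by itself produce a critical point at level $c_k$. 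That step is immediate only under (ii); under (i) it needs a genuine additional argument, which your proposal does not supply.
\end{enumerate}
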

	
	\begin{Thm}\label{theo-2.4}
		Suppose $\Phi\in C^{1}(Z,\mathbb{R})$ is of the form
		\[
		\Phi(u) = \frac{1}{2}\big(\|y\|^{2} - \|x\|^{2}\big) - \Psi(u)
		\quad\text{for }u=x+y\in Z=M\oplus N,
		\]
		such that:
		\begin{itemize}
			\item[(i)] $\Psi\in C^{1}(Z,\mathbb{R})$ is bounded from below;
			\item[(ii)] $\Psi\colon (Z,\mathcal{T}_{w})\to\mathbb{R}$ is sequentially lower
			semicontinuous, that is, $u_{n}\rightharpoonup u$ in $Z$ implies
			\(\Psi(u)\le \liminf\Psi(u_{n})\);
			\item[(iii)] $\Psi'\colon (Z,\mathcal{T}_{w})\to (Z^{*},\mathcal{T}_{w^{*}})$ is
			sequentially continuous;
			\item[(iv)] $v\colon Z\to\mathbb{R}$, $v(u)=\|u\|^{2}$, is $C^{1}$ and
			$v'\colon (Z,\mathcal{T}_{w})\to (Z^{*},\mathcal{T}_{w^{*}})$ is
			sequentially continuous.
		\end{itemize}
		Then $\Phi$ satisfies $(\Phi_{0})$.
	\end{Thm}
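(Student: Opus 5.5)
We need to prove Theorem \ref{theo-2.4}: under (i)--(iv), $\Phi$ satisfies $(\Phi_0)$. That is, for every $c\in\mathbb R$ two things must hold. First, $\Phi_c$ is $\mathcal T_{\mathcal S}$-closed. Second, $\Phi'\colon(\Phi_c,\mathcal T_{\mathcal S})\to(Z^*,\mathcal T_{w^*})$ is continuous.

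The plan is to reduce everything to sequential, bounded statements on which the weak and $\mathcal T_{\mathcal S}$ topologies agree. The first step is an a priori bound. If $u=x+y\in\Phi_c$, then
\[
\tfrac12\|x\|^2\le \tfrac12\|y\|^2-\Psi(u)-c\le \tfrac12\|y\|^2-\inf\Psi-c,
\]
using (i). Hence $\|x\|\le C(1+\|y\|)$ on $\Phi_c$. So any subset of $\Phi_c$ on which $\|P_N u\|$ is bounded is bounded in $Z$.

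Second, I use the standard fact (Kryszewski--Szulkin, Bartsch--Ding) about bounded sets. On bounded subsets of $Z$, the topology $\mathcal T_{\mathcal S}$ coincides with the topology that is weak on $M$ and norm on $N$. Also, on bounded sets of the separable reflexive space $M$, the topology generated by the dense family $\mathcal S\subset M^*$ is metrizable and equals the weak topology. Combined with the first step, this lets me test $\mathcal T_{\mathcal S}$-closedness and continuity by sequences. Concretely, suppose $u$ lies in the $\mathcal T_{\mathcal S}$-closure of $\Phi_c$. Any $\mathcal T_{\mathcal S}$-neighbourhood controls $\|P_N(\cdot)-P_Nu\|$, so nearby points of $\Phi_c$ have bounded $N$-part. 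By the first step they lie in a bounded set, where $\mathcal T_{\mathcal S}$ is metrizable. I can therefore extract a sequence $u_n\in\Phi_c$ with $x_n\rightharpoonup x$ and $y_n\to y$. The same reduction applies to continuity of $\Phi'$. For the $w^*$-topology I check convergence against each fixed $z\in Z$, and continuity from a metrizable bounded piece follows from sequential continuity.

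Third, I treat closedness. Take $u_n\in\Phi_c$ with $x_n\rightharpoonup x$ in $M$ and $y_n\to y$ in $N$. Then $u_n\rightharpoonup u$ in $Z$, so by (ii) $\Psi(u)\le\liminf\Psi(u_n)$. Weak lower semicontinuity of the norm gives $\|x\|^2\le\liminf\|x_n\|^2$, and $\|y_n\|^2\to\|y\|^2$. Therefore
\[
\Phi(u)\ge\limsup\Phi(u_n)\ge c,
\]
so $u\in\Phi_c$.

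Fourth, I treat continuity of $\Phi'$. Write
\[
\Phi'(u)=\tfrac12 v'(y)-\tfrac12 v'(x)-\Psi'(u),
\]
where I use the orthogonal splitting of the norm $\|u\|^2=\|x\|^2+\|y\|^2$ implicit in the form of $\Phi$. By (iv), $v'(x_n)\rightharpoonup^* v'(x)$. Since $y_n\to y$ in norm and $v$ is $C^1$, $v'(y_n)\to v'(y)$. By (iii), $\Psi'(u_n)\rightharpoonup^*\Psi'(u)$. Hence $\Phi'(u_n)\rightharpoonup^*\Phi'(u)$.

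The main obstacle is the passage in the second step from the non-metrizable topology $\mathcal T_{\mathcal S}$ to sequences. This requires the boundedness from the first step and the identification of $\mathcal T_{\mathcal S}$ on bounded sets. I would cite it as the standard lemma from \cite{MR2255874}.
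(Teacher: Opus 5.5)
Your argument is correct and is essentially the standard proof from Bartsch--Ding \cite{MR2255874}; the paper only quotes this result from there and gives no proof of its own. As in that proof, the bound $\|x\|^2\le\|y\|^2-2\inf\Psi-2c$ on $\Phi_c$ turns $\mathcal T_{\mathcal S}$-convergence into $x_n\rightharpoonup x$, $y_n\to y$, after which (ii) gives closedness and (iii)--(iv) give $w^*$-continuity of $\Phi'$.

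Two small corrections. First, since $\mathcal S$ is countable, $\mathcal T_{\mathcal S}$ is generated by countably many separating seminorms and is therefore metrizable on all of $Z$. So your ``main obstacle'' disappears; the bound is still needed, but only to pass from convergence against the dense set $\mathcal S$ to weak convergence in $M$. Second, no orthogonality $\|u\|^2=\|x\|^2+\|y\|^2$ is assumed, so the derivative should read $\Phi'(u)z=\tfrac12 v'(y)[P_Nz]-\tfrac12 v'(x)[P_Mz]-\Psi'(u)z$. This leaves your convergence argument unchanged, because $P_Mz$ and $P_Nz$ are fixed vectors.
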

	
	Now, we recall the following Brezis--Lieb type lemma and Concentration--Compactness Principle on periodic quantum graphs \cite{yang2025bound}.
	\begin{Lem}\label{lemma2.1}
		Let $(\mathcal{G},\mu)$ be the quantum graph endowed with the one-dimensional
		Lebesgue measure along edges. Let $\{u_n\}\subset L^p(\mathcal{G})$ for some $1<p<\infty$
		and assume
		\begin{enumerate}
			\item[\textnormal{(i)}] $u_n(x)\to u(x)$ almost everywhere on $\mathcal{G}$;
			\item[\textnormal{(ii)}] $\sup_n|u_n|_{L^p(\mathcal{G})}<\infty$.
		\end{enumerate}
		Then, as $n\to\infty$,
		\[
		|u_n|_{L^p(\mathcal{G})}^p
		= |u_n-u|_{L^p(\mathcal{G})}^p + |u|_{L^p(\mathcal{G})}^p + o(1).
		\]
	\end{Lem}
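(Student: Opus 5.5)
The plan is to follow the classical proof of the Brezis--Lieb lemma; the only graph-specific input is that $(\mathcal G,\mu)$ is a $\sigma$-finite measure space, namely a countable union of edges carrying one-dimensional Lebesgue measure. The argument only uses pointwise a.e.\ convergence, an integrable dominating function, and the dominated convergence theorem, so it transfers verbatim from $\mathbb R^N$ to $\mathcal G$.

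\emph{Step 1: the limit lies in $L^p$.} By Fatou's lemma and hypothesis (ii),
\[
|u|_{L^p(\mathcal G)}^p\le\liminf_{n\to\infty}|u_n|_{L^p(\mathcal G)}^p\le C:=\sup_n|u_n|_{L^p(\mathcal G)}^p<\infty .
\]
Set $w_n=u_n-u$. Then $w_n\to0$ a.e., and $|w_n|_{L^p}^p\le 2^{p}C$ uniformly in $n$.

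\emph{Step 2: an elementary inequality.} For every $\varepsilon>0$ there is $C_\varepsilon>0$ such that, for all $a,b\in\mathbb C$ (or $\mathbb C^2$),
\[
\big||a+b|^p-|a|^p\big|\le\varepsilon|a|^p+C_\varepsilon|b|^p .
\]
This follows from the mean value theorem together with Young's inequality, and uses $p>1$.

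\emph{Step 3: the key estimate.} Define
\[
f_n^\varepsilon:=\Big(\big||u_n|^p-|w_n|^p-|u|^p\big|-\varepsilon|w_n|^p\Big)^+ .
\]
Applying Step 2 with $a=w_n$ and $b=u$ gives
\[
\big||u_n|^p-|w_n|^p-|u|^p\big|\le\varepsilon|w_n|^p+(C_\varepsilon+1)|u|^p,
\]
so $0\le f_n^\varepsilon\le(1+C_\varepsilon)|u|^p$. The right-hand side is in $L^1(\mathcal G)$ by Step 1. Since $w_n\to0$ a.e., also $f_n^\varepsilon\to0$ a.e. Dominated convergence on $\mathcal G$ then gives $\int_{\mathcal G}f_n^\varepsilon\,d\mu\to0$.

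\emph{Step 4: conclusion.} From the definition of $f_n^\varepsilon$,
\[
\int_{\mathcal G}\big||u_n|^p-|w_n|^p-|u|^p\big|\,d\mu\le\int_{\mathcal G}f_n^\varepsilon\,d\mu+\varepsilon\,2^pC .
\]
Taking $\limsup_{n\to\infty}$ bounds the left side by $\varepsilon\,2^pC$. Since $\varepsilon>0$ is arbitrary, the limsup is zero, and this yields the claimed splitting of $|u_n|_{L^p(\mathcal G)}^p$.

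There is no real obstacle. The step needing care is Step 2, where $C_\varepsilon$ must be chosen independently of $a$ and $b$. It is also worth checking that the integrals over $\mathcal G$ are sums over countably many edges, so Fatou's lemma and dominated convergence apply directly on the $\sigma$-finite space $\mathcal G$.
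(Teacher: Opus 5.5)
Your proof is correct: it is the classical Brezis--Lieb argument, and each step checks out. The bound $|w_n|_{L^p}^p\le 2^{p-1}(C+C)=2^pC$ holds. So does the domination $0\le f_n^\varepsilon\le(1+C_\varepsilon)|u|^p\in L^1(\mathcal G)$. The a.e.\ convergence $f_n^\varepsilon\to 0$ is also valid, using that $u$ is finite a.e.\ by Step 1.

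The paper gives no proof of this lemma; it only recalls it from \cite{yang2025bound}, so your self-contained argument is a legitimate substitute. Writing it out also makes two points explicit that the citation leaves implicit:
\begin{itemize}
\item The graph enters only as the measure space $(\mathcal G,\mu)$. Fatou's lemma and dominated convergence do not even need $\sigma$-finiteness, so your remark about countably many edges is harmless but unnecessary.
\item Your Step 2 inequality holds verbatim for $a,b\in\mathbb C^2$. The same proof therefore gives the spinor-valued splitting that the paper actually relies on in the proof of Lemma~\ref{lem-3.10}, whereas the lemma is stated only for scalar $L^p(\mathcal G)$.
\end{itemize}
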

	
\begin{Lem}\label{lem:2.2}
Let $\mathcal G$ be a periodic quantum graph with a free cocompact $\mathbb Z^{d}$-action,
and let $\mathcal K\subset\mathcal G$ be a fixed fundamental cell.
Assume that the induced action $u\mapsto a*u$ is isometric on $Y$.
Let $(u_m)$ be bounded in $Y$.
Then exactly one of the following alternatives holds:
\begin{itemize}
\item[(i)] Vanishing: For every $R>0$,
\[
\sup_{x\in\mathcal G}\int_{B_R(x)}|u_m|^{2}\,dx\to 0.
\]
In this case $u_m\to 0$ in $L^{q}(\mathcal G,\mathbb C^{2})$ for every $q\in(2,\infty)$.

\item[(ii)] Compactness modulo $\mathbb Z^{d}$-translations: There exist $R>0$, $\delta>0$,
a sequence $(a_m)\subset\mathbb Z^{d}$ and a nonzero $u\in Y$ such that, setting $v_m=a_m*u_m$,
\[
\int_{B_R(\bar x)}|v_m|^{2}\,dx\ge \delta \quad\text{for all }m
\]
for some fixed $\bar x\in\mathcal K$, and
\[
v_m\rightharpoonup u\ \text{in }Y,
\qquad
v_m\to u\ \text{in }L^{q}_{\mathrm{loc}}(\mathcal G,\mathbb C^{2})\ \text{for all }q\in[2,\infty).
\]
\end{itemize}
\end{Lem}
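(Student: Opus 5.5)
The plan is to prove the concentration--compactness lemma in the standard Lions way, adapted to the graph through the covering $\mathcal G=\bigcup_k T^k(\mathcal K)$.

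\emph{Setup.} Let $(u_m)$ be bounded in $Y$. Since $Y\hookrightarrow H^{1/2}(\mathcal G,\mathbb C^2)\hookrightarrow L^p$ for $2\le p<\infty$, the sequence is bounded in every such $L^p$. Fix $R>0$ large enough that every ball $B_R(x)$ contains a cell $T^k(\mathcal K)$. Because the action is cocompact and edge lengths are uniformly bounded, each $B_R(x)$ meets only a uniformly bounded number of translates of $\mathcal K$. Hence, for any fixed $R$, vanishing in the sense of (i) is equivalent to
\[
\sup_{k\in\mathbb Z^d}\int_{T^k(\mathcal K)}|u_m|^2\,dx\to0 .
\]
The two alternatives are mutually exclusive: in case (ii) the translation invariance of $|\cdot|$ gives $\int_{B_R(T^{-a_m}\bar x)}|u_m|^2\ge\delta$, which contradicts (i). So it suffices to show that if (i) fails, then (ii) holds, and that in case (i) we have $L^q$ convergence.

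\emph{Vanishing implies $L^q$ convergence.} Fix $q\in(2,\infty)$. On each cell $\mathcal K_k=T^k(\mathcal K)$ apply the local Gagliardo--Nirenberg inequality in fractional form. Since the cells are isometric, the constant $C$ is uniform in $k$:
\[
|u|_{L^q(\mathcal K_k)}^q\le C\,|u|_{L^2(\mathcal K_k)}^{q-2}\,\|u\|_{H^{1/2}(\mathcal K_k)}^{2}.
\]
This can be obtained by interpolating $L^q$ between $L^2$ and $L^{p}$ for some large $p$, using $H^{1/2}(I)\hookrightarrow L^p(I)$ on a finite union of intervals; if necessary we choose exponents so that the power of the $H^{1/2}$ norm is exactly $2$, or argue first for one $q$ and then interpolate. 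Summing over $k$ gives
\[
|u_m|_q^q\le C\Big(\sup_k|u_m|_{L^2(\mathcal K_k)}\Big)^{q-2}\sum_k\|u_m\|^2_{H^{1/2}(\mathcal K_k)}.
\]
This is the \textbf{main obstacle}: the $H^{1/2}$ seminorm is nonlocal, so $\sum_k\|u\|^2_{H^{1/2}(\mathcal K_k)}$ must be shown to be bounded by $C\|u\|^2_{H^{1/2}(\mathcal G)}$. Since $H^{1/2}(\mathcal G)$ is defined edgewise, the Gagliardo double integral over an edge dominates the sum of the restricted double integrals over the pieces of that edge, because restriction only drops the cross terms. Each cell consists of a finite number of whole edges, or edge pieces, so superadditivity holds and the right-hand side is $\le C\|u_m\|_Y^2$. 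The prefactor tends to $0$, so $u_m\to0$ in $L^q$.

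\emph{Non-vanishing implies compactness modulo translations.} If (i) fails for some $R$, then after passing to a subsequence there are $\delta>0$ and $k_m\in\mathbb Z^d$ with $\int_{T^{k_m}(\mathcal K)}|u_m|^2\ge\delta$. Set $a_m=-k_m$ and $v_m=a_m*u_m$, so that $(v_m)(x)=u_m(T^{k_m}x)$ and therefore $\int_{\mathcal K}|v_m|^2\ge\delta$. The action is an isometry on $Y$, so $(v_m)$ is bounded. Since $Y$ is a Hilbert space, a further subsequence satisfies $v_m\rightharpoonup u$ in $Y$. For every compact subgraph the embedding $Y\hookrightarrow L^q$ is compact; applying this on an exhausting sequence of compact subgraphs (finite unions of cells) and extracting a diagonal subsequence gives $v_m\to u$ in $L^q_{\rm loc}$ for all $q\in[2,\infty)$. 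Finally, choose $\bar x\in\mathcal K$ and enlarge $R$ so that $\mathcal K\subset B_R(\bar x)$. Then $\int_{B_R(\bar x)}|v_m|^2\ge\delta$, and strong $L^2(\mathcal K)$ convergence gives $\int_{\mathcal K}|u|^2\ge\delta$, hence $u\ne0$.
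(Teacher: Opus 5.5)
Your proposal is correct and follows essentially the same route as the paper: you translate non-vanishing $L^2$ mass into the fixed cell $\mathcal K$ and extract a weak limit using local compact embeddings, and in the vanishing case you sum a cellwise Lions-type estimate over the translates $T^k(\mathcal K)$; your superadditivity remark for the edgewise Gagliardo seminorm justifies that summation more carefully than the paper's appeal to ``locality of the $Y$-norm''. One caution: the cell inequality $|u|_{L^q(\mathcal K_k)}^q\le C\,|u|_{L^2(\mathcal K_k)}^{q-2}\|u\|_{H^{1/2}(\mathcal K_k)}^2$ holds only for $q\in(2,4]$ (concentrating profiles $\phi(x/\varepsilon)$ on one edge violate it when $q>4$, and your interpolation via $H^{1/2}\hookrightarrow L^p$ gives power exactly $2$ only for $q=4(1-1/p)<4$), so your fallback of proving it for one such $q$ and then interpolating against the uniform $L^p$ bounds is necessary rather than optional --- the paper asserts the cell estimate for all $q\in(2,\infty)$ and has the same defect.
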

\begin{proof}
\textbf{Step 1: If $(i)$ fails, then $(ii)$ holds.} 
Assume that $(i)$ fails. Then there exist $R>0$, $\delta>0$ and points $x_m\in\mathcal G$ such that
\[
\int_{B_R(x_m)}|u_m|^{2}\,dx \ge \delta \quad\text{for all }m.
\]
By cocompactness of the free $\mathbb Z^{d}$-action, we can choose $a_m\in\mathbb Z^{d}$ such that
\[
y_m = T^{-a_m}x_m \in \mathcal K.
\]
Set $v_m = a_m*u_m$. By the isometry of the action on $Y$, $(v_m)$ is bounded in $Y$ and
\[
\int_{B_R(y_m)}|v_m|^{2}\,dx
=\int_{B_R(x_m)}|u_m|^{2}\,dx \ge \delta.
\]
Since $\mathcal K$ is compact, up to a subsequence $y_m\to\bar x\in\mathcal K$.
For all large $m$ we have $B_R(y_m)\subset B_{2R}(\bar x)$, hence
\[
\int_{B_{2R}(\bar x)}|v_m|^{2}\,dx \ge \delta \quad\text{for all large }m.
\]
Up to a subsequence, $v_m\rightharpoonup u$ in $Y$.

Let $G_0=B_{2R}(\bar x)$, which is a compact subgraph (a finite union of edge segments).
By local compactness of the embedding on compact subgraphs, we have
\[
v_m\to u \quad\text{in }L^{2}(G_0,\mathbb C^{2}),
\]
hence
\[
\int_{B_{2R}(\bar x)}|u|^{2}\,dx
=\lim_{m\to\infty}\int_{B_{2R}(\bar x)}|v_m|^{2}\,dx \ge \delta,
\]
so $u\ne 0$.

Now let $G_1\subset\mathcal G$ be any compact subgraph. By boundedness of $(v_m)$ in $Y$ and the same
local compactness argument, we obtain
\[
v_m\to u \quad\text{in }L^{q}(G_1,\mathbb C^{2}) \ \text{for all }q\in[2,\infty).
\]
Therefore $v_m\to u$ in $L^{q}_{\mathrm{loc}}(\mathcal G,\mathbb C^{2})$ for all $q\in[2,\infty)$.
Renaming $2R$ as $R$, we get (ii).

\textbf{Step 2: If $(i)$ holds, then $u_m\to0$ in $L^{q}(\mathcal G,\mathbb C^{2})$ for every $q\in(2,\infty)$.}
Assume $(i)$. Fix $R>0$.
Choose finitely many points $x_1,\dots,x_J\in\mathcal K$ such that
\[
\mathcal K\subset \bigcup_{j=1}^{J} B_R(x_j).
\]
For each $a\in\mathbb Z^{d}$ set $\mathcal K_a = T^{a}\mathcal K$. Then
\[
\mathcal K_a \subset \bigcup_{j=1}^{J} B_R(T^{a}x_j),
\]
hence
\[
\sup_{a\in\mathbb Z^{d}}\int_{\mathcal K_a}|u_m|^{2}\,dx
\le J\sup_{x\in\mathcal G}\int_{B_R(x)}|u_m|^{2}\,dx \to 0.
\]
Define
\[
\mu_m=\sup_{a\in\mathbb Z^{d}}\int_{\mathcal K_a}|u_m|^{2}\,dx.
\]
Then $\mu_m\to0$.

We claim that for every $q\in(2,\infty)$ there exists $C_q>0$, independent of $m$, such that
\begin{equation}\label{eq:Lions-cell}
\|u_m\|_{L^{q}(\mathcal G)}^{q}
\le C_q\,\mu_m^{\frac{q-2}{2}}\,\|u_m\|_{Y}^{2}.
\end{equation}
Indeed, since all cells $\mathcal K_a$ are isometric to $\mathcal K$ and the action is isometric on $Y$,
the local Sobolev embedding constants on $\mathcal K_a$ are uniform in $a$.
Using the standard estimate on the compact cell $\mathcal K_a$,
\[
\|w\|_{L^{q}(\mathcal K_a)}^{q}
\le C_q \Bigl(\int_{\mathcal K_a}|w|^{2}\,dx\Bigr)^{\frac{q-2}{2}}
\|w\|_{Y(\mathcal K_a)}^{2},
\]
and summing over $a\in\mathbb Z^{d}$,
we obtain
\[
\|u_m\|_{L^{q}(\mathcal G)}^{q}
=\sum_{a\in\mathbb Z^{d}}\|u_m\|_{L^{q}(\mathcal K_a)}^{q}
\le C_q\,\mu_m^{\frac{q-2}{2}} \sum_{a\in\mathbb Z^{d}} \|u_m\|_{Y(\mathcal K_a)}^{2}.
\]
By locality of the $Y$-norm, the last sum is controlled by
$\|u_m\|_{Y}^{2}$ up to a constant independent of $m$, which gives \eqref{eq:Lions-cell}. Since $(u_m)$ is bounded in $Y$ and $\mu_m\to0$, \eqref{eq:Lions-cell} yields $\|u_m\|_{L^{q}(\mathcal G)}\to0$ for all $q\in(2,\infty)$.

\textbf{Step 3: Mutual exclusivity.}
If $(ii)$ holds, then for some $R>0$ and $\delta>0$ we have
$\int_{B_R(\bar x)}|v_m|^{2}\,dx\ge\delta$, hence (i) cannot hold. Therefore exactly one alternative occurs.
\end{proof}

	\section{Spectral properties and main results with potential $V_{1}$}
	
	\subsection{Spectral properties}
	Throughout this section we regard $L^{2}(\mathcal{G},\mathbb{C}^{2})$ as a complex Hilbert space
with the standard sesquilinear inner product
\[
(u,v)_{L^{2}}=\int_{\mathcal{G}}\langle u(x),v(x)\rangle_{\mathbb{C}^{2}}\,dx.
\]
When passing to the variational setting (real Fr\'echet derivatives via $\mathbb{C}^{2}\simeq\mathbb{R}^{4}$),
we will use the real part of this pairing.

	We denote by $|\cdot|_{p}$ the $L^{p}$-norm on $\mathcal{G}$ for $p\in[1,\infty]$. In the rest of this section we work under assumption~$(V_{1})$, in particular $V\in C^{1}(\mathcal G,[0,\infty))$ and is $\mathbb Z^{d}$-periodic.
We consider the Dirac-type operator
	\[
	A= -i\sigma_{1}\frac{d}{dx} + \big(V(x)+a\big)\sigma_{3}
	\]
	acting on $L^{2}(\mathcal{G},\mathbb{C}^{2})$.
	Under the Kirchhoff-type vertex conditions introduced in Definition~\ref{def2.2}, $A$ is self-adjoint on $L^{2}(\mathcal{G},\mathbb{C}^{2})$ (see, e.g., \cite{MR1064315}).
	It is unbounded both from above and from below.
    In order to analyze the spectrum of $A$, we record the edgewise differential expression of $A^{2}$:
for every edge $e\in E$ and $u\in \operatorname{dom}(A^{2})$,
\[
(A^{2}u)_{e}
= -u_{e}'' + \big(V+a\big)^{2}u_{e} + i\sigma_{3}\sigma_{1}\,V' u_{e}
\quad \text{on } I_{e},
\]
where $V'$ denotes the derivative of $V$ along the edge coordinate.
	Furthermore, for our purposes, we denote by $\sigma(S)$, $\sigma_{d}(S)$, $\sigma_{e}(S)$ and $\sigma_{c}(S)$
	the spectrum, the discrete spectrum, the essential spectrum and the continuous spectrum of a self-adjoint
	operator $S$ on $L^{2}$, respectively.
	
	\begin{Lem}\label{lem-3.1}
		If $(V_{1})$ holds, then $\sigma(A^{2})\subset[a^{2},\infty)$.
	\end{Lem}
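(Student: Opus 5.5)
The plan is to prove the coercivity estimate
\[
\|Au\|_{2}\ \ge\ a\,\|u\|_{2}\qquad\text{for all }u\in\operatorname{dom}(A),
\]
and then conclude by the spectral theorem. For $u\in\operatorname{dom}(A^{2})$ we have $(A^{2}u,u)_{L^{2}}=\|Au\|_{2}^{2}\ge a^{2}\|u\|_{2}^{2}$. Since $A^{2}$ is self-adjoint and nonnegative, its spectrum is then contained in $[a^{2},\infty)$. Equivalently, the spectral measure of $A$ gives no mass to $(-a,a)$, so $\sigma(A)\cap(-a,a)=\emptyset$.

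I would not square $A$ and use the edgewise formula for $A^{2}$. That route produces the term $i\sigma_{3}\sigma_{1}V'$, which has no sign, and $(V_{1})$ gives no control on $V'$ relative to $(V+a)^{2}-a^{2}$. Instead I would use an accretivity argument after multiplying by the unitary matrix $\sigma_{3}$. Put $m=V+a\ge a$. Since $\sigma_{3}\sigma_{1}=i\sigma_{2}$, we get
\[
\sigma_{3}A=-i\sigma_{3}\sigma_{1}\tfrac{d}{dx}+m=\sigma_{2}\tfrac{d}{dx}+m .
\]
The key identity to establish is
\[
\operatorname{Re}(\sigma_{3}Au,u)_{L^{2}}=\int_{\mathcal G}m|u|^{2}\,dx\ \ge\ a\|u\|_{2}^{2},
\]
which uses only $V\ge0$. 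Once this holds, Cauchy--Schwarz and $|\sigma_{3}Au|=|Au|$ pointwise give $a\|u\|_{2}^{2}\le\|Au\|_{2}\|u\|_{2}$, which is the claimed estimate.

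To prove the identity, I need $\operatorname{Re}\int_{\mathcal G}\langle\sigma_{2}u',u\rangle\,dx=0$. Since $\sigma_{2}$ is Hermitian, on each edge $\operatorname{Re}\langle\sigma_{2}u_{e}',u_{e}\rangle=\frac12\frac{d}{dx}\langle\sigma_{2}u_{e},u_{e}\rangle$. Also $\langle\sigma_{2}u,u\rangle=2\operatorname{Im}(u^{1}\overline{u^{2}})$, up to a sign depending on the convention for $\langle\cdot,\cdot\rangle_{\mathbb C^{2}}$. Integrating edge by edge, the integral becomes a sum over vertices $v$ of
\[
-\sum_{(e,\xi)\in\mathcal E_{v}}s(\xi)\operatorname{Im}\bigl(u_{e}^{1}(\xi)\overline{u_{e}^{2}(\xi)}\bigr).
\]
By \eqref{eq-2.3}, $u^{1}_{e}(\xi)$ equals a common value $u^{1}(v)$ for all $(e,\xi)\in\mathcal E_{v}$. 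So this term equals $-\operatorname{Im}\bigl(u^{1}(v)\,\overline{\sum_{(e,\xi)\in\mathcal E_{v}}s(\xi)u_{e}^{2}(\xi)}\bigr)$, which vanishes by \eqref{eq-2.4}.

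The main obstacle is justifying this integration by parts on the infinite graph. For $u\in\operatorname{dom}(A)$ we have $u\in\bigoplus_{e}H^{1}(I_{e},\mathbb C^{2})$. Half-lines do not occur here, since $\mathcal G$ is cocompact and so all edge lengths lie in $[\ell_{-},\ell_{+}]$. Each vertex has finite degree, and the series $\sum_{e}\int_{I_{e}}|\langle\sigma_{2}u_{e}',u_{e}\rangle|$ converges absolutely because $u,u'\in L^{2}$. It therefore suffices to check that the vertex boundary terms can be regrouped vertex by vertex. For this I would first prove the identity for compactly supported $u\in\operatorname{dom}(A)$, where all sums are finite. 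I would then pass to general $u$ by a cutoff $\chi_{n}u$ with $\chi_{n}$ piecewise linear, equal to $1$ on a large ball and with $|\chi_{n}'|\le1/n$. Multiplying by the scalar $\chi_{n}$, which is continuous at vertices, preserves \eqref{eq-2.3}--\eqref{eq-2.4}, and $\chi_{n}u\to u$ in the graph norm of $A$.
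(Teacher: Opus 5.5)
Your proposal is correct and is essentially the paper's argument. Both proofs reduce to showing that $\operatorname{Re}\bigl(-i\sigma_{1}u',\sigma_{3}u\bigr)_{L^{2}}$ (your $\operatorname{Re}\int_{\mathcal G}\langle\sigma_{2}u',u\rangle\,dx$) vanishes by the vertex conditions \eqref{eq-2.3}--\eqref{eq-2.4}, and then use $V\ge0$. The differences are only in packaging. The paper expands $\|Au\|_{L^{2}}^{2}$ and discards the nonnegative term $\|-i\sigma_{1}u'+V\sigma_{3}u\|_{L^{2}}^{2}$, while you apply Cauchy--Schwarz to $\operatorname{Re}(Au,\sigma_{3}u)_{L^{2}}$. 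Your cutoff argument also justifies summing the boundary terms over infinitely many edges, which the paper handles in one line.
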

	\begin{proof}
Assume $(V_{1})$. Since $A$ is self-adjoint, for every $u\in\operatorname{dom}(A)$ we have
\[
(A^{2}u,u)_{L^{2}}=(Au,Au)_{L^{2}}=\|Au\|_{L^{2}}^{2}.
\]
Write $A= B + (V+a)\sigma_{3}$ with $B=-i\sigma_{1}\frac{d}{dx}$. Then
\[
\|Au\|_{L^{2}}^{2}
= \|Bu+V\sigma_{3}u\|_{L^{2}}^{2}
+ a^{2}\|u\|_{L^{2}}^{2}
+ 2a\,(V u,u)_{L^{2}}
+ 2a\,\Re(Bu,\sigma_{3}u)_{L^{2}}.
\]
We claim that $\Re(Bu,\sigma_{3}u)_{L^{2}}=0$. Indeed, on each edge $e$,
an integration by parts gives
\[
(Bu_e,\sigma_{3}u_e)_{L^{2}(I_e)} + (\sigma_{3}u_e,Bu_e)_{L^{2}(I_e)}
= i\Big[u_e^{1}\overline{u_e^{2}}-u_e^{2}\overline{u_e^{1}}\Big]_{0}^{\ell_e}.
\]
Summing over all edges and using the Kirchhoff-type vertex conditions
\eqref{eq-2.3}--\eqref{eq-2.4}, all boundary contributions cancel, hence
\[
(Bu,\sigma_{3}u)_{L^{2}} + (\sigma_{3}u,Bu)_{L^{2}} = 0,
\]
which is equivalent to $\Re(Bu,\sigma_{3}u)_{L^{2}}=0$.

Therefore,
\[
\|Au\|_{L^{2}}^{2}
\ge a^{2}\|u\|_{L^{2}}^{2} + 2a\,(V u,u)_{L^{2}}
\ge a^{2}\|u\|_{L^{2}}^{2},
\]
since $(V_{1})$ implies $V\ge 0$. Hence $A^{2}\ge a^{2}I$ in the sense of quadratic forms,
and consequently $\sigma(A^{2})\subset[a^{2},\infty)$.
\end{proof}
	
	Denote by $\mathcal{D}(A)$ the domain of $A$. Equipped with the inner product
	\[
	(u,v)_{\mathcal{D}(A)} = (Au,Av)_{L^{2}} + (u,v)_{L^{2}},
	\]
	the space $\mathcal{D}(A)$ is a Hilbert space; we denote the corresponding norm by
	\[
	\|u\|_{\mathcal{D}(A)}^{2} = |Au|_{2}^{2} + |u|_{2}^{2}.
	\]
	
	\begin{Lem}\label{lem-3.2}
If $(V_{1})$ is satisfied, then on $\mathcal{D}(A)$ the graph norm $\|\cdot\|_{\mathcal{D}(A)}$
and the edgewise $H^{1}$-norm $\|\cdot\|_{ H^{1}(\mathcal{G},\mathbb{C}^{2})}$ are equivalent.
\end{Lem}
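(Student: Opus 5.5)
We prove the two inequalities $\|u\|_{\mathcal D(A)}\le C\|u\|_{H^1}$ and $\|u\|_{H^1}\le C\|u\|_{\mathcal D(A)}$ for $u\in\mathcal D(A)$, using only that $V$ is bounded (by $(V_1)$) and the edgewise structure of $A$.

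\textbf{Upper bound.} On each edge,
\[
(Au)_e=-i\sigma_1u_e'+(V+a)\sigma_3u_e .
\]
Since $\sigma_1,\sigma_3$ are unitary,
\[
|Au|_2\le |u'|_2+(|V|_\infty+a)|u|_2 .
\]
Squaring and adding $|u|_2^2$ gives $\|u\|_{\mathcal D(A)}^2\le C\|u\|_{H^1}^2$ with $C$ depending only on $a$ and $|V|_\infty$. No vertex conditions are needed for this direction.

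\textbf{Lower bound.} Write $A=B+W$ with $B=-i\sigma_1\frac{d}{dx}$ and $W=(V+a)\sigma_3$, a bounded multiplication operator. Since $\sigma_1$ is unitary,
\[
|Bu|_2=|u'|_2 \qquad\text{edgewise, summed over all edges.}
\]
The triangle inequality then gives
\[
|u'|_2=|Au-Wu|_2\le |Au|_2+(|V|_\infty+a)|u|_2 .
\]
Squaring,
\[
\|u\|_{H^1}^2=|u'|_2^2+|u|_2^2\le 2|Au|_2^2+\big(2(|V|_\infty+a)^2+1\big)|u|_2^2\le C\|u\|_{\mathcal D(A)}^2 .
\]

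\textbf{Domain and completeness.} The only point needing care is that $\mathcal D(A)$ is contained in the edgewise $H^1$ space, so that $u'\in L^2(\mathcal G)$ and the computations above make sense. This holds by Definition~\ref{def2.2}, since $\operatorname{dom}(\mathcal D)$ consists of edgewise-$H^1$ spinors satisfying \eqref{eq-2.3}--\eqref{eq-2.4} with $\sum_e\|u_e\|_{H^1}^2<\infty$. Adding the bounded perturbation $V\sigma_3$ does not change the domain, because $V\in L^\infty$ by $(V_1)$ and bounded symmetric perturbations preserve self-adjointness and domain (Kato--Rellich with relative bound $0$).

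The genuine obstacle would be to show that the graph norm is complete and that the domain is closed in $H^1$. That is not required here, since we only compare two norms on the same set. Lemma~\ref{lem-3.1} is not needed, though it could alternatively be used to drop the $|u|_2^2$ term, via $|Au|_2\ge a|u|_2$.
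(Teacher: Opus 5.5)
Your proposal is correct and essentially identical to the paper's proof. Both directions come from the triangle inequality applied to $Au=-i\sigma_1u'+(V+a)\sigma_3u$, using the unitarity of $\sigma_1,\sigma_3$ and the boundedness of $V$, followed by squaring; your added remark that $\mathcal D(A)$ lies in the edgewise $H^1$ space (so that $u'\in L^2(\mathcal G)$ makes sense) covers a point the paper uses without comment.
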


\begin{proof}
Recall that
\[
A u = -i\sigma_{1}u' + \big(V(x)+a\big)\sigma_{3}u
\quad\text{for }u\in\mathcal{D}(A).
\]
Assumption $(V_{1})$ gives \(V\in C^{1}(\mathcal{G},[0,\infty))\) and \(V(T^{k}x)=V(x)\) for all
\(k\in\mathbb{Z}^{d}\). Since the quotient \(\mathcal{G}/\mathbb{Z}^{d}\) is compact and \(V\) is continuous,
\(V\) is bounded on \(\mathcal{G}\). Thus there exists a constant
\[
C_{0}= |V+a|_{L^{\infty}(\mathcal{G})} < \infty.
\]
Using that \(\sigma_{1},\sigma_{3}\) are unitary matrices, we obtain
\[
|Au|_{2}
\le |-i\sigma_{1}u'|_{2} + |(V+a)\sigma_{3}u|_{2}
\le |u'|_{2} + C_{0}|u|_{2},
\]
hence for some constant \(C_{1}>0\),
\[
|Au|_{2}^{2}
\le C_{1}^{2}\big(|u'|_{2}^{2} + |u|_{2}^{2}\big).
\]
Consequently,
\[
\|u\|_{\mathcal{D}(A)}^{2}
= |Au|_{2}^{2} + |u|_{2}^{2}
\le (C_{1}^{2}+1)\big(|u'|_{2}^{2} + |u|_{2}^{2}\big)
= (C_{1}^{2}+1)\,\|u\|_{ H^{1}(\mathcal{G},\mathbb{C}^{2})}^{2}.
\]

Conversely, from the identity
\[
-i\sigma_{1}u' = Au - (V+a)\sigma_{3}u
\]
we get
\[
|u'|_{2}
= |-i\sigma_{1}u'|_{2}
\le |Au|_{2} + |(V+a)\sigma_{3}u|_{2}
\le |Au|_{2} + C_{0}|u|_{2}.
\]
Using the elementary estimate \((\alpha+\beta)^{2}\le 2(\alpha^{2}+\beta^{2})\) we infer
\[
|u'|_{2}^{2}
\le 2|Au|_{2}^{2} + 2C_{0}^{2}|u|_{2}^{2}
\le C_{2}\big(|Au|_{2}^{2} + |u|_{2}^{2}\big)
= C_{2}\|u\|_{\mathcal{D}(A)}^{2}
\]
for some constant \(C_{2}>0\). Therefore,
\[
\|u\|_{ H^{1}(\mathcal{G},\mathbb{C}^{2})}^{2}
= |u'|_{2}^{2} + |u|_{2}^{2}
\le (C_{2}+1)\,\|u\|_{\mathcal{D}(A)}^{2}.
\]
Combining the two inequalities, there exist constants \(c_{1},c_{2}>0\) such that
\[
c_{1}\,\|u\|_{ H^{1}(\mathcal{G},\mathbb{C}^{2})}
\le \|u\|_{\mathcal{D}(A)}
\le c_{2}\,\|u\|_{ H^{1}(\mathcal{G},\mathbb{C}^{2})}
\quad\text{for all }u\in\mathcal{D}(A),
\]
which proves the equivalence of the two norms.
\end{proof}

	Now we consider the operator $A$. Let $\left(E_{\gamma}\right)_{\gamma \in \mathbb{R}}$ and
	$\left(F_{\gamma}\right)_{\gamma \ge 0}$ denote the spectral families of $A$ and $A^{2}$, respectively.
	Recall that
	\begin{equation}\label{eq-3.2}
		F_{\gamma}
		= E_{\gamma^{1/2}} - E_{-\gamma^{1/2}-0}
		= E_{[-\gamma^{1/2},\,\gamma^{1/2}]}
		\quad \text{for all } \gamma \ge 0.
	\end{equation}
Here $E_{\lambda-0}$ denotes the left limit of the spectral family, namely
\[
E_{\lambda-0}:=\lim_{\varepsilon\to 0} E_{\lambda-\varepsilon}
\quad\text{in the strong operator topology}.
\]

	\begin{Lem}\label{lem-3.3}
		If $(V_{1})$ holds, then
		\[
		\sigma(A)\subset(-\infty,-a]\cup[a,\infty)
		\quad\text{and}\quad
		\inf\sigma(|A|) \le a + \sup_{\mathcal{G}} V.
		\]
	\end{Lem}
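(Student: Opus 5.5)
The first inclusion will follow from Lemma~\ref{lem-3.1} by the spectral mapping theorem. Since $A$ is self-adjoint, $\sigma(A^{2})=\{\lambda^{2}:\lambda\in\sigma(A)\}$. By Lemma~\ref{lem-3.1} this set lies in $[a^{2},\infty)$, so every $\lambda\in\sigma(A)$ satisfies $|\lambda|\ge a$, i.e. $\sigma(A)\subset(-\infty,-a]\cup[a,\infty)$. Equivalently, I can use \eqref{eq-3.2}: for $\gamma<a^{2}$ the projection $F_{\gamma}$ vanishes, hence $E_{[-\gamma^{1/2},\gamma^{1/2}]}=0$ for all $\gamma^{1/2}<a$. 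This says directly that $(-a,a)\cap\sigma(A)=\emptyset$.

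For the upper bound on $\inf\sigma(|A|)$, I will use the variational characterization
\[
\inf\sigma(|A|)^{2}=\inf\sigma(A^{2})=\inf_{0\ne u\in\mathcal D(A)}\frac{\|Au\|_{2}^{2}}{\|u\|_{2}^{2}}.
\]
It therefore suffices to build trial spinors $u_{n}\in\mathcal D(A)$ whose Rayleigh quotient satisfies $\limsup_n\|Au_{n}\|_{2}^{2}/\|u_{n}\|_{2}^{2}\le (a+\sup V)^{2}$. The expansion from the proof of Lemma~\ref{lem-3.1} gives
\[
\|Au\|_{2}^{2}=\|Bu\|_{2}^{2}+\|(V+a)u\|_{2}^{2}+2\Re(Bu,(V+a)\sigma_{3}u)_{2}.
\]
By Cauchy--Schwarz and $\|(V+a)u\|_{2}\le (a+\sup V)\|u\|_{2}$, this yields
\[
\|Au\|_{2}\le \|Bu\|_{2}+(a+\sup V)\|u\|_{2}=\|u'\|_{2}+(a+\sup V)\|u\|_{2}.
\]
So I only need $u_{n}\in\mathcal D(A)$ with $\|u_{n}'\|_{2}/\|u_{n}\|_{2}\to 0$.

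The trial functions will be spread-out spinors of the form $u_{n}=(\varphi_{n},0)^{T}$, where $\varphi_{n}$ is a continuous scalar cutoff. The first component is continuous at vertices, so \eqref{eq-2.3} holds, and the second component is zero, so \eqref{eq-2.4} holds trivially. Hence $u_{n}\in\mathcal D(A)$ whenever $\varphi_{n}\in H^{1}(\mathcal G)$. I will take $\varphi_{n}$ equal to $1$ on the union of the cells $T^{k}\mathcal K$ with $|k|\le n$, decaying linearly to $0$ over the next layer of cells (via an edgewise linear interpolation of a lattice cutoff $\chi(k/n)$), and zero beyond. Then $\|\varphi_{n}\|_{2}^{2}\gtrsim n^{d}|\mathcal K|$, while $\varphi_{n}'$ is supported on $O(n^{d-1})$ cells (or on $O(n^{d})$ cells with size $O(1/n)$), giving $\|\varphi_{n}'\|_{2}^{2}=O(n^{d-2})$. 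Hence $\|u_{n}'\|_{2}/\|u_{n}\|_{2}=O(1/n)\to0$. This construction uses the periodicity/cocompactness assumptions (a)--(c); alternatively, on any noncompact graph one can use a long path with slowly varying cutoff.

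The main obstacle is making the cutoff construction rigorous on a general $\mathbb Z^{d}$-periodic graph, in particular guaranteeing continuity at vertices where cells meet and bounding the number of boundary cells. I will handle this by defining $\varphi_{n}$ on the vertex set through the cell index, extending it linearly along edges, and using that each cell has finitely many edges with lengths bounded above and below. Combining the two estimates gives $\inf\sigma(|A|)\le\liminf_n\|Au_{n}\|_{2}/\|u_{n}\|_{2}\le a+\sup_{\mathcal G}V$.
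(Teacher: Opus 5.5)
Your proposal is correct and follows essentially the same route as the paper. The inclusion $\sigma(A)\subset(-\infty,-a]\cup[a,\infty)$ comes from Lemma~\ref{lem-3.1} via \eqref{eq-3.2}, and the upper bound comes from normalized trial spinors $(\varphi_n,0)^T$ built from periodic cutoffs with $\|u_n'\|_2/\|u_n\|_2\to 0$. You bound the Rayleigh quotient of $A^2$ where the paper uses $(|A|v,v)\le |Av|_2$, but this difference is immaterial.

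One small slip: a cutoff that drops from $1$ to $0$ across a single layer of $O(n^{d-1})$ cells gives $\|\varphi_n'\|_2^2=O(n^{d-1})$, not $O(n^{d-2})$; the latter needs the slowly varying $\chi(k/n)$ profile. Either bound suffices, since its ratio to $\|\varphi_n\|_2^2\gtrsim n^d$ still tends to zero.
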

	
	\begin{proof}
		Assume $(V_{1})$. By \eqref{eq-3.2} and Lemma~\ref{lem-3.1} we have
		\[
		\dim\big(E_{[-\gamma^{1/2},\,\gamma^{1/2}]}L^{2}\big)
		= \dim\big(F_{\gamma}L^{2}\big) = 0
		\quad\text{for } 0\le \gamma < a^{2},
		\]
		hence there is no spectral mass in $(-a,a)$ and therefore
		\[
		\sigma(A)\subset \mathbb{R}\setminus(-a,a)
		= (-\infty,-a]\cup[a,\infty).
		\]
		
		We now estimate the bottom of $\sigma(|A|)$. By $(V_{1})$ and the periodicity of $\mathcal{G}$ under the free, cocompact
		$\mathbb{Z}^{d}$-action introduced in the introduction, there exists a compact fundamental cell
		$\mathcal{K}\subset\mathcal{G}$ such that
		\[
		\mathcal{G} = \bigcup_{k\in\mathbb{Z}^{d}} T^{k}(\mathcal{K}),
		\]
		where $T^{k}\in\mathrm{Aut}(\mathcal{G})$ are graph automorphisms and the overlaps $T^{k}(\mathcal{K})\cap T^{\ell}(\mathcal{K})$
		consist only of boundary vertices when $k\ne\ell$.
		Since $V\in C^{1}(\mathcal{G})$ and is $\mathbb{Z}^{d}$-periodic, it is bounded on $\mathcal{G}$; set
		\[
		V_{\infty} = \sup_{\mathcal{G}} V < \infty.
		\]
		
		For $N\in\mathbb{N}$, let
		\[
		\Lambda_{N}= \big\{k\in\mathbb{Z}^{d} : \max_{1\le j\le d} |k_{j}|\le N\big\},
		\qquad
		\mathcal{G}_{N}= \bigcup_{k\in\Lambda_{N}} T^{k}(\mathcal{K}).
		\]
		Then $\mathcal{G}_{N}$ is a finite connected subgraph of $\mathcal{G}$, and
		\[
		|\Lambda_{N}|\sim (2N+1)^{d}\quad\text{as }N\to\infty.
		\]
		By standard cut-off constructions on periodic quantum graphs, there exists a family of functions
		$\eta_{N}\in H^{1}(\mathcal{G})$ such that
		\[
		0\le \eta_{N}\le 1 \text{ on }\mathcal{G},\quad
		\eta_{N}\equiv 1 \text{ on }\mathcal{G}_{N-1},\quad
		\mathrm{supp}\,\eta_{N} \subset \mathcal{G}_{N},
		\]
		and there exists a constant $C_{*}>0$, independent of $N$, such that
		\[
		\int_{\mathcal{G}} |\eta_{N}'|^{2}\,dx \le C_{*}\,\big(|\Lambda_{N}| - |\Lambda_{N-1}|\big)
		\le C\,N^{d-1}
		\]
		for some $C>0$ depending only on the geometry of the fundamental cell and the action.
		Moreover, there exists $c_{0}>0$ such that
		\[
		\int_{\mathcal{G}} |\eta_{N}|^{2}\,dx
		\ge \int_{\mathcal{G}_{N-1}} 1\,dx
		\ge c_{0}\,|\Lambda_{N-1}|
		\ge c_{1} N^{d}
		\]
		for some $c_{1}>0$ and all $N$ large.
		
		Define spinors
		\[
		u_{N}(x)= \binom{\eta_{N}(x)}{0} \in H^{1}(\mathcal{G},\mathbb{C}^{2}),
		\qquad
		v_{N}= \frac{u_{N}}{|u_{N}|_{2}}.
		\]
		By construction, $u_{N}\in\operatorname{dom}(A)$: the first component $\eta_{N}$ is continuous,
		the second component is identically zero, and the Kirchhoff-type condition \eqref{eq-2.4} is trivially satisfied
		since $u_{N}^{2}\equiv 0$.
		
		From the above estimates,
		\[
		|u_{N}'|_{2}^{2}
		= \int_{\mathcal{G}} |\eta_{N}'|^{2}\,dx
		\le C\,N^{d-1},
		\qquad
		|u_{N}|_{2}^{2}
		= \int_{\mathcal{G}} |\eta_{N}|^{2}\,dx
		\ge c_{1}N^{d},
		\]
		so
		\[
		|v_{N}'|_{2}^{2}
		= \frac{|u_{N}'|_{2}^{2}}{|u_{N}|_{2}^{2}}
		\le \frac{C\,N^{d-1}}{c_{1}N^{d}}
		= \frac{C'}{N}
		\rightarrow 0 \quad\text{as }N\to\infty.
		\]
		In particular, $|v_{N}'|_{2}\to 0$ as $N\to\infty$, while $|v_{N}|_{2}=1$ for all $N$.
		
		Using the definition of $A$ and the unitarity of $\sigma_{1},\sigma_{3}$ we obtain
		\[
		|A v_{N}|_{2}
		= \big|-i\sigma_{1}v_{N}' + (V+a)\sigma_{3}v_{N}\big|_{2}
		\le |v_{N}'|_{2} + |(V+a)\sigma_{3}v_{N}|_{2}
		\le |v_{N}'|_{2} + (a+V_{\infty})|v_{N}|_{2},
		\]
		so
		\[
		|A v_{N}|_{2}
		\le a + V_{\infty} + |v_{N}'|_{2}
		\rightarrow a + V_{\infty}
		\quad\text{as }N\to\infty.
		\]
        Since $|A|$ is a nonnegative self-adjoint operator, its spectral bottom admits the variational characterization
\[
\inf\sigma(|A|)
= \inf_{\substack{u\in\mathcal{D}(A)\\ |u|_{2}=1}} (|A|u,u)_{L^{2}}.
\]
Therefore, for every $N$,
\[
\inf\sigma(|A|)
\le (|A|v_{N},v_{N})_{L^{2}}
\le \||A|v_{N}\|_{2}\,|v_{N}|_{2}
= |A v_{N}|_{2},
\]
where we used Cauchy--Schwarz and the identity $\||A|v_{N}\|_{2}=|A v_{N}|_{2}$ for $v_{N}\in\mathcal{D}(A)$.
Hence
\[
\inf\sigma(|A|)
\le \limsup_{N\to\infty}|A v_{N}|_{2}
\le a + \sup_{\mathcal{G}} V.
\]
This proves the second assertion.
	\end{proof}
	
	Note that we have an orthogonal decomposition
	\[
	L^{2} = L^{-}\oplus L^{0}\oplus L^{+}, \qquad u = u^{-} + u^{0} + u^{+},
	\]
	such that $A$ is negative definite on $L^{-}$, positive definite on $L^{+}$, and vanishes on $L^{0}$.
	Clearly, $L^{0}=\{0\}$ under assumption $(V_{1})$, since in this case $0\notin\sigma(A)$ by Lemmas~\ref{lem-3.1} and~\ref{lem-3.3}.

    Let $Y=\mathcal{D}(|A|^{1/2})$ and regard it as a real Hilbert space endowed with the inner product
\[
(u,v):=\Re\big(|A|^{1/2}u,|A|^{1/2}v\big)_{L^{2}}
+ \Re (u^{0},v^{0})_{L^{2}},
\]
and the norm $\|u\|=(u,u)^{1/2}$. There is an induced orthogonal decomposition
	\[
	Y = Y^{-}\oplus Y^{0}\oplus Y^{+},
	\quad\text{where } Y^{\pm} = Y\cap L^{\pm},\; Y^{0} = Y\cap L^{0},
	\]
	which is orthogonal with respect to both $(\cdot,\cdot)_{L^{2}}$ and $(\cdot,\cdot)$.

    \begin{Lem}\label{lem-3.4}
If $(V_{1})$ holds, then $Y$ and $ H^{1/2}(\mathcal{G},\mathbb{C}^{2})$ have equivalent norms, and
\[
a\,|u|_{2}^{2} \le \|u\|^{2}\quad\text{for all }u\in Y.
\]
\end{Lem}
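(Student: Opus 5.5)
The inequality part comes directly from the spectral theorem, and the norm equivalence is reduced, via the interpolation results of Section 2, to equivalence of graph norms from Lemma~\ref{lem-3.2}.

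\textbf{Step 1: the inequality.} Since $L^{0}=\{0\}$ under $(V_{1})$, the norm reduces to $\|u\|^{2}=\big||A|^{1/2}u\big|_{2}^{2}=\int_{\sigma(A)}|\lambda|\,d(E_{\lambda}u,u)_{L^{2}}$. By Lemma~\ref{lem-3.3}, $\sigma(A)\subset(-\infty,-a]\cup[a,\infty)$, so $|\lambda|\ge a$ on the support of the spectral measure. Hence $\|u\|^{2}\ge a\int d(E_{\lambda}u,u)=a|u|_{2}^{2}$ for all $u\in Y$.

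\textbf{Step 2: identify $Y$ as an interpolation space.} Apply Theorem~\ref{theo-2.1} to $H=A$, giving $A\cong f$ on $L^{2}(M,\mu)$. By Proposition~\ref{pro-2.1} with $\theta=1/2$,
\[
[L^{2},\mathcal D(A)]_{1/2}=\{x:\langle (1+f^{2})^{1/2}x,x\rangle<\infty\},
\]
with norm $C_{1/2}\int\sqrt{1+f^{2}}\,|x|^{2}$. Since $|f|\ge a>0$ a.e., we have
\[
|f|\le\sqrt{1+f^{2}}\le (1+a^{-1})|f|.
\]
Therefore this interpolation norm is equivalent to $\int|f||x|^{2}=\|u\|^{2}$. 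Thus $Y=[L^{2},\mathcal D(A)]_{1/2}$ with equivalent norms, where $\mathcal D(A)$ carries the graph norm.

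\textbf{Step 3: transfer to $H^{1/2}$.} By Lemma~\ref{lem-3.2}, the graph norm on $\mathcal D(A)$ is equivalent to the $H^{1}(\mathcal G,\mathbb C^{2})$ norm. So $\mathcal D(A)$ is a closed subspace of $H^{1}(\mathcal G,\mathbb C^{2})$, cut out by the vertex conditions \eqref{eq-2.3}--\eqref{eq-2.4}. Interpolation is functorial, which gives the continuous embedding
\[
Y\hookrightarrow [L^{2},H^{1}]_{1/2}=H^{1/2}(\mathcal G,\mathbb C^{2}),
\]
with $\|u\|_{H^{1/2}}\le C\|u\|$.

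\textbf{Step 4: the reverse inequality (main obstacle).} Two issues arise. First, one has to check that $[L^{2},\mathcal D(A)]_{1/2}$ is the full $H^{1/2}$. At order $1/2$ the traces are not defined, so the vertex conditions should disappear after interpolation. Second, one needs $\|u\|\le C\|u\|_{H^{1/2}}$.

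The approach I would try first is to construct a bounded retraction. This is a map $P:H^{1}(\mathcal G,\mathbb C^{2})\to\mathcal D(A)$ that is also bounded on $L^{2}$ and is a left inverse of the inclusion, built by modifying $u$ near each vertex with cut-offs on edges of uniformly bounded length so as to enforce \eqref{eq-2.3}--\eqref{eq-2.4}. Periodicity makes the constants uniform. Then $[L^{2},\mathcal D(A)]_{1/2}$ is a complemented subspace of $H^{1/2}$, and the subtle point is whether it coincides with $H^{1/2}$ itself, the borderline Lions--Magenes case.

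If that fails, I would bound $\||A|^{1/2}u\|_{2}^{2}$ by $C\|u\|_{H^{1/2}}^{2}$ directly. The route is to compare the form of $|A|$ with that of $(1-\Delta)^{1/2}$ via $A^{2}=-\frac{d^{2}}{dx^{2}}+(V+a)^{2}+i\sigma_{3}\sigma_{1}V'$ (with $V'$ bounded by periodicity), and then use the Heinz inequality, which gives operator monotonicity of the square root. This yields the two-sided equivalence modulo the domain identification, which is the delicate part.
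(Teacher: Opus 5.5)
Your Steps~1--3 are correct. Step~1 is the paper's argument for $a|u|_2^2\le\|u\|^2$ (the paper phrases it as $\sigma(|A|)\subset[a,\infty)$ via Lemma~\ref{lem-3.1}). Step~2 correctly identifies $Y=[L^2,\mathcal D(A)]_{1/2}$ with equivalent norms. Step~3 correctly gives $\|u\|_{H^{1/2}}\le C\|u\|$ by functoriality.

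The gap is Step~4, and it cannot be filled. With $H^{1/2}(\mathcal G,\mathbb C^2)=\bigoplus_e H^{1/2}(I_e,\mathbb C^2)$ as defined in Section~2.3, the reverse inequality is false, and the Lions--Magenes obstruction you flagged is exactly what happens. Take the chain graph of Example~1, with the edges $[k,k+1]$ oriented naturally. At each degree-two vertex, \eqref{eq-2.3}--\eqref{eq-2.4} say that both components are continuous, so $\mathcal D(A)=H^1(\mathbb R,\mathbb C^2)$. By your Step~2 and Lemma~\ref{lem-3.2}, $Y=H^{1/2}(\mathbb R,\mathbb C^2)$ with equivalent norms. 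The spinor $(\mathbf{1}_{[0,1]},0)$ lies in the edgewise space but not in $Y$, since $\int_0^1\int_{-\infty}^0(x-y)^{-2}\,dy\,dx=\int_0^1x^{-1}\,dx=\infty$.

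Even the inequality restricted to $Y$ fails. Let $r_n(x)=\max\{0,\min\{nx,\,n(1-x),\,1\}\}$ and $u_n=(r_n,0)\in\mathcal D(A)$. The edgewise $H^{1/2}$ norms of $u_n$ stay bounded, because the Gagliardo seminorm on an interval is scale invariant and each ramp contributes $O(1)$. On the other hand, $\|u_n\|^2\ge c\int_0^1 r_n(x)^2x^{-1}\,dx\ge c\log(n-1)\to\infty$.

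Hence neither route you propose in Step~4 can work.
\begin{itemize}
\item The retraction cannot exist. An operator bounded on $L^2$ that is the identity on $\mathcal D(A)$ is the identity on all of $L^2$, by density of $\mathcal D(A)$. So it cannot map $H^1(\mathcal G,\mathbb C^2)$ into $\mathcal D(A)$.
\item The Heinz route only transports form inequalities. Comparing $A^2$ with an edgewise decoupled (e.g.\ Neumann) $1-\Delta$, whose form domain is the edgewise $H^1$, reproduces the direction of Step~3. The reverse comparison would need the edgewise $H^1$ to lie in the form domain $\mathcal D(A)$ of $A^2$, which is false.
\item If instead $\Delta$ carries the vertex conditions inherited from $A^2$, its form domain is $\mathcal D(A)$ and $\mathcal D((1-\Delta)^{1/4})=Y$ again, so nothing is gained.
\end{itemize}

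The paper's own proof does not get around this. It passes from Lemma~\ref{lem-3.2} to $[L^2,\mathcal D(A)]_{1/2}\cong[L^2,H^1]_{1/2}$, which replaces $\mathcal D(A)$ by $H^1$ inside the interpolation couple. But Lemma~\ref{lem-3.2} only compares two norms on the same space $\mathcal D(A)$; it does not make $\mathcal D(A)$ equal to $H^1$.

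What is true is exactly what your Steps~1--3 prove: the continuous embedding $Y\hookrightarrow H^{1/2}(\mathcal G,\mathbb C^2)$ and the bound $a|u|_2^2\le\|u\|^2$. That one-sided embedding is all the later arguments use, namely $Y\hookrightarrow L^p$ and compactness on compact subgraphs (e.g.\ in Lemmas~\ref{lem-3.6} and~\ref{lem-3.12}). So the statement should be weakened to this, not proved as written.
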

	
	\begin{proof}
		First, By Theorem~\ref{theo-2.1} and Proposition~\ref{pro-2.1} applied with $H=A$,
		the form domain of $A$ coincides with the real interpolation space
		\[
		Y = \mathcal{D}(|A|^{1/2})
		= [L^{2}(\mathcal{G},\mathbb{C}^{2}),\,\mathcal{D}(A)]_{1/2}.
		\]
		By Lemma~\ref{lem-3.2}, the graph norm on $\mathcal{D}(A)$ and the
$ H^{1}(\mathcal{G},\mathbb{C}^{2})$-norm are equivalent. Therefore
\[
Y \cong [L^{2}(\mathcal{G},\mathbb{C}^{2}),\, H^{1}(\mathcal{G},\mathbb{C}^{2})]_{1/2}
=  H^{1/2}(\mathcal{G},\mathbb{C}^{2}),
\]
with equivalence of norms.
		
		Next, Lemma~\ref{lem-3.1} implies $\sigma(A^{2})\subset[a^{2},\infty)$, hence
		$\sigma(|A|)\subset[a,\infty)$ and $|A|\ge aI$ in the sense of quadratic forms.
		Under $(V_{1})$ we also have $0\notin\sigma(A)$, so $L^{0}=\{0\}$ and
		the inner product on $Y$ simplifies to
		\[
		(u,v) = \big(|A|^{1/2}u,\,|A|^{1/2}v\big)_{L^{2}},
		\quad u,v\in Y.
		\]
		Thus, for every $u\in Y$,
		\[
		\|u\|^{2}
		= \big(|A|^{1/2}u,\,|A|^{1/2}u\big)_{L^{2}}
		= (|A|u,u)_{L^{2}}
		\ge a\,(u,u)_{L^{2}}
		= a\,|u|_{2}^{2},
		\]
		which gives the claimed coercivity.
	\end{proof}
	
	Next, we explain that the bound states coincide with the critical points of the $C^{1}$ action functional
$\Phi:Y\to\mathbb R$ defined by
\begin{equation}\label{eq-3.3}
\Phi(u)
= \frac{1}{2}\big(\|u^{+}\|^{2}-\|u^{-}\|^{2}\big)
+ \frac{\omega}{2}\int_{\mathcal{G}} |u|^{2}\,dx
- \int_{\mathcal{G}} F(x,u)\,dx,
\qquad u\in Y.
\end{equation}

\begin{Pro}\label{pro-3.1}
A spinor $u$ is a bound state of frequency $\omega$ of the NLDE \eqref{eq-1.5}
if and only if it is a critical point of $\Phi$.
\end{Pro}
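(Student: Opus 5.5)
The plan has three parts. First, show that $\Phi$ is $C^{1}$ on $Y$ and compute its derivative. Second, identify critical points with weak solutions of \eqref{eq-1.5}. Third, upgrade weak solutions to genuine bound states satisfying the Kirchhoff vertex conditions.

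\emph{Differentiability.} The quadratic part $\frac12(\|u^{+}\|^{2}-\|u^{-}\|^{2})$ is a bounded quadratic form on $Y$. Since $Y=Y^{+}\oplus Y^{-}$ is orthogonal and $L^{0}=\{0\}$ by Lemmas~\ref{lem-3.1} and~\ref{lem-3.3}, it equals $\frac12\Re(|A|^{1/2}u,\operatorname{sgn}(A)|A|^{1/2}u)_{L^{2}}=\frac12\mathcal Q_{A}(u)$. Its derivative is therefore $v\mapsto \mathcal Q_A(u,v)$, and for $u,v\in\mathcal D(A)$ this reduces to $\Re\int_{\mathcal G}\langle Au,v\rangle\,dx$. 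The term $\frac{\omega}{2}|u|_2^2$ is smooth by Lemma~\ref{lem-3.4}.

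For $\int F(x,u)\,dx$, use $(F_0)$, $(F_2)$ and either $(F_3)$ or $(F_6)$--$(F_7)$ together with $(F_5)$. These give a growth bound $|F_u(x,u)|\le \varepsilon|u|+C_\varepsilon|u|^{p-1}$ for some $p>2$; in the asymptotically linear case simply $|F_u|\le C|u|$. With the embeddings $Y\hookrightarrow L^{q}$, $2\le q<\infty$ (Lemma~\ref{lem-3.4} and the $H^{1/2}$ embedding), the standard Nemytskii argument gives $\Psi(u)=\int F(x,u)\in C^{1}(Y,\mathbb R)$ with $\Psi'(u)v=\Re\int F_u(x,u)\cdot v$. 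Hence
\[
\Phi'(u)v=\mathcal Q_A(u,v)+\omega\Re\int_{\mathcal G}\langle u,v\rangle\,dx-\Re\int_{\mathcal G}F_u(x,u)\cdot v\,dx,\qquad v\in Y .
\]

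\emph{Critical points are weak solutions, and conversely.} By the formula above, $\Phi'(u)=0$ says exactly that $u\in Y$ solves \eqref{eq-1.5} in the form sense, which is how bound states are defined at the energy level. Conversely, any weak solution in $Y$ gives $\Phi'(u)=0$, since $\mathcal D(A)$ is dense in $Y$ and every term is continuous on $Y$.

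\emph{Regularity and vertex conditions.} This is the main step to check. Set $g:=F_u(x,u)-\omega u$. It lies in $L^{2}$, because $u\in L^{2}\cap L^{q}$ for all $q$ and $|F_u(x,u)|\lesssim |u|+|u|^{p-1}$. The identity $\mathcal Q_A(u,v)=\Re(g,v)_{L^{2}}$ for all $v\in Y$ (equivalently for all $v\in\mathcal D(A)$, after replacing $v$ by $iv$ to pass from the real to the complex pairing) gives $(u,Av)_{L^2}=(g,v)_{L^2}$ for all $v\in\mathcal D(A)$. Self-adjointness of $A$ then yields $u\in\mathcal D(A^{*})=\mathcal D(A)$ and $Au=g$.

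It follows that $u\in H^{1}$ edgewise and satisfies the vertex conditions \eqref{eq-2.3}--\eqref{eq-2.4}, which are encoded in $\mathcal D(A)$. Since $u\in H^{1}$, it is continuous and bounded, so $F_u(x,u)$ is continuous edgewise. Writing $-i\sigma_1u'=g-(V+a)\sigma_3u$ then shows $u\in C^{1}$ on each edge, so \eqref{eq-1.5} holds pointwise on every edge. Hence $u$ is a bound state.

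The delicate point is the complex-versus-real duality in this last step. Because $\Phi$ is defined through real parts, testing with both $v$ and $iv$ is needed to recover the full complex identity $(u,Av)=(g,v)$ before invoking $A=A^{*}$.
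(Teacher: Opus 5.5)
Your argument is correct, but the regularity step follows a different route from the paper's.

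\begin{itemize}
\item \emph{The paper's route.} It works locally. It tests the critical-point identity \eqref{eq-3.4} with $\varphi=(\varphi^1,0)^T$, and then $(0,\varphi^2)^T$, supported in the interior of a single edge. This shows that each component of $u_e$ has distributional derivative in $L^2(I_e)$ and that \eqref{eq-1.5} holds edgewise. It then builds special test functions in $\operatorname{dom}(A)$ supported in the star of a vertex and integrates by parts to recover \eqref{eq-2.4} and \eqref{eq-2.3} one at a time.
\item \emph{Your route.} You use the weak identity against all $v\in\mathcal D(A)$ at once and invoke $A=A^*$. Edgewise $H^1$-regularity and both vertex conditions then follow together from $u\in\mathcal D(A^*)=\mathcal D(A)$.
\end{itemize}

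What your route gains:
\begin{itemize}
\item It is shorter and needs no vertex-localized test functions.
\item It works for any self-adjoint choice of vertex conditions.
\item It makes explicit points the paper passes over: that $g=F_u(x,u)-\omega u\in L^2$ (via $(F_5)$ and $Y\hookrightarrow L^q$), and that one must test with $iv$ as well as $v$ to pass from the real to the complex pairing.
\item It justifies $\Phi\in C^1$ and the formula for $\Phi'$, which the paper simply assumes when writing \eqref{eq-3.4}.
\end{itemize}

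What the paper's route gains: it does not need the inclusion $\mathcal D(A^*)\subset\mathcal D(A)$ at this stage, and it shows transparently how the Kirchhoff-type conditions arise as natural boundary conditions of the action.

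One detail you should state explicitly. The identity you actually use is $\mathcal Q_A(u,v)=\Re(u,Av)_{L^2}$ for $u\in Y$ and $v\in\mathcal D(A)$, not only for $u,v\in\mathcal D(A)$. It follows from $\operatorname{sgn}(A)|A|^{1/2}v\in\mathcal D(|A|^{1/2})$ and the self-adjointness of $|A|^{1/2}$.
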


\begin{proof}
If $u$ is a bound state of frequency $\omega$ of \eqref{eq-1.5}, then it satisfies the Euler--Lagrange equation
associated with \eqref{eq-3.3}, hence $u$ is a critical point of $\Phi$.

Conversely, assume that $u$ is a critical point of $\Phi$, namely $u\in Y$ and
\begin{equation}\label{eq-3.4}
\langle \Phi'(u),\varphi\rangle
= \Re\int_{\mathcal{G}}\langle u,(A+\omega)\varphi\rangle\,dx
- \Re\int_{\mathcal{G}} \langle F_{u}(x,u),\varphi\rangle\,dx
=0
\quad \forall\,\varphi\in \operatorname{dom}(A).
\end{equation}

Fix an edge $e\in E$ and take
\begin{equation}\label{eq-3.5}
\varphi = \binom{\varphi^{1}}{0}
\quad\text{with}\quad
0\neq \varphi^{1}\in C_{0}^{\infty}(I_{e}).
\end{equation}
Since $\varphi$ is supported in the interior of $I_{e}$, inserting \eqref{eq-3.5} into \eqref{eq-3.4}
and using the explicit expression of $\mathcal{D}$ on $I_{e}$ we obtain
\[
-i\int_{I_{e}} u_{e}^{2}\,(\overline{\varphi^{1}})'\,dx_{e}
= \int_{I_{e}}
\Big[(a+V(x)+\omega)u_{e}^{1}
- (F_{u}(x,u_{e}))^{1}\Big]\,
\overline{\varphi^{1}}\,dx_{e}.
\]
Hence the distributional derivative of $u_{e}^{2}$ belongs to $L^{2}(I_{e})$, so $u_{e}^{2}\in H^{1}(I_{e})$,
and the first component of \eqref{eq-1.5} holds on $I_{e}$.
Exchanging the role of $\varphi^{1}$ and $\varphi^{2}$ in \eqref{eq-3.5} and repeating the argument,
we see that $u_{e}^{1}\in H^{1}(I_{e})$ and that the second component of \eqref{eq-1.5} holds as well.
Since $e$ is arbitrary, we conclude that $u\in  H^{1}(\mathcal{G},\mathbb{C}^{2})$
and $u$ solves \eqref{eq-1.5} on each edge.

We now prove that $u$ fulfills the vertex conditions \eqref{eq-2.3} and \eqref{eq-2.4}.
Fix a vertex $v$ and choose
\[
\varphi = \binom{\varphi^{1}}{0}\in\operatorname{dom}(\mathcal{A})
\quad\text{with compact support,}\quad
\varphi^{1}(v)=1,
\]
and $\varphi^{1}(v')=0$ for every vertex $v'\neq v$ with $v'\in \operatorname{supp}\varphi^{1}$.
Integrating by parts in \eqref{eq-3.4} and using the fact that $u$ satisfies \eqref{eq-1.5}
in the interior of each edge, the bulk terms cancel and we are left with the boundary contribution at $v$:
\[
\sum_{e\succ v} \varphi_{e}^{1}(v)\,u_{e}^{2}(v)_{\pm} = 0.
\]
Since $\varphi^{1}(v)=1$, this implies
\[
\sum_{e\succ v} u_{e}^{2}(v)_{\pm} = 0,
\]
so $u^{2}$ satisfies \eqref{eq-2.4}.

Next, fix a vertex $v$ with degree at least $2$ and two edges $e_{1},e_{2}\succ v$.
Choose
\[
\varphi = \binom{0}{\varphi^{2}}\in\operatorname{dom}(\mathcal{A})
\]
with compact support in the star of $v$, such that
\[
\varphi_{e_{1}}^{2}(v)_{\pm} = -\varphi_{e_{2}}^{2}(v)_{\pm}\neq 0,
\qquad
\varphi_{e}^{2}(v)_{\pm}=0 \ \ \forall\, e\succ v,\ e\neq e_{1},e_{2}.
\]
Then integrating by parts in \eqref{eq-3.4} yields
\[
\varphi_{e_{1}}^{2}(v)_{\pm}\,u_{e_{1}}^{1}(v)
+ \varphi_{e_{2}}^{2}(v)_{\pm}\,u_{e_{2}}^{1}(v) = 0,
\]
hence $u_{e_{1}}^{1}(v)=u_{e_{2}}^{1}(v)$. Repeating the argument for any pair of edges incident at $v$
we obtain \eqref{eq-2.3}.

Therefore $u\in \operatorname{dom}(\mathcal{A})$ and satisfies \eqref{eq-1.5}, namely $u$ is a bound state
of frequency $\omega$ of the NLDE.
\end{proof}

In summary, by the orthogonal decomposition $Y = Y^{-}\oplus Y^{0}\oplus Y^{+}$, the action functional
\eqref{eq-3.3} can also be written in the form
\begin{equation}\label{eq-3.6}
\Phi(u)
= \frac{1}{2}\big(\|u^{+}\|^{2} - \|u^{-}\|^{2}\big)
+ \frac{\omega}{2}\int_{\mathcal{G}} |u|^{2}\,dx
- \Psi(u),
\end{equation}
where
\[
\Psi(u)= \int_{\mathcal{G}} F(x,u)\,dx .
\]

\subsection{Proof of Theorem \ref{theo-1.1}}
In this section we prove Theorem \ref{theo-1.1}. Recall the functional $\Phi$ defined on the space
$Y=Y^{-}\oplus Y^{+}$ by \eqref{eq-3.6}:
\[
\Phi(u)
= \frac{1}{2}\Big(\|u^{+}\|^{2}-\|u^{-}\|^{2} + \omega|u|_{2}^{2}\Big)
- \Psi(u),
\qquad
\Psi(u) = \int_{\mathcal{G}} F(x,u)\,dx.
\]
In order to apply the critical point theorems of Section~2, we set $M=Y^{-}$, $N=Y^{+}$ and fix a countable dense subset $\mathcal S\subset M^{*}$.
First, by $(\omega)$ and Lemma~\ref{lem-3.4} we have
\begin{equation}\label{eq-3.7}
\frac{a-|\omega|}{a}\,\|u^{+}\|^{2}
\le
\big(\|u^{+}\|^{2} \pm \omega|u^{+}|_{2}^{2}\big)
\le
\frac{a+|\omega|}{a}\,\|u^{+}\|^{2}
\end{equation}
and
\begin{equation}\label{eq-3.8}
\frac{a-|\omega|}{a}\,\|u^{-}\|^{2}
\le
\big(\|u^{-}\|^{2} \pm \omega|u^{-}|_{2}^{2}\big)
\le
\frac{a+|\omega|}{a}\,\|u^{-}\|^{2}.
\end{equation}

\begin{Lem}\label{lem-3.5}
Assume $(F_{0})$--$(F_{2})$ and $(F_{5})$. Then $\Psi$ is weakly sequentially lower semicontinuous on $Y$
and $\Phi':Y\to Y^{*}$ is weakly sequentially continuous. Moreover, for every $c>0$ there exists
$\zeta=\zeta(c)>0$ such that
\[
\|u\| < \zeta\,\|u^{+}\|
\quad\text{for all }u\in\Phi_{c}=\{u\in Y:\Phi(u)\ge c\}.
\]
\end{Lem}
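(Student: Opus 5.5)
The proof splits into three parts: the weak properties of $\Psi$, the weak continuity of $\Phi'$, and the estimate on $\Phi_c$. Throughout, growth bounds for $F$ come from $(F_2)$ and $(F_5)$. Integrating $|F_{uu}|\le C_1(1+|u|^\nu)$ from $0$, and using $F_u(x,0)=0$ (which follows from $(F_2)$), gives
\[
|F_u(x,u)|\le C\big(|u|+|u|^{1+\nu}\big),\qquad 0\le F(x,u)\le C\big(|u|^2+|u|^{2+\nu}\big).
\]
Since Lemma~\ref{lem-3.4} gives $Y\cong H^{1/2}(\mathcal G,\mathbb C^2)\hookrightarrow L^p$ for all $p\in[2,\infty)$, both $\Psi$ and $\Psi'$ are well defined and bounded on bounded sets. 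The standard Nemytskii argument then makes $\Psi$ of class $C^1$, so $\Phi\in C^1(Y,\mathbb R)$.

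For weak lower semicontinuity, let $u_n\rightharpoonup u$ in $Y$. By the compact embedding $Y\hookrightarrow L^p(\mathcal K',\mathbb C^2)$ on compact subgraphs $\mathcal K'$, we have $u_n\to u$ in $L^p_{\rm loc}$. Passing to a subsequence realizing the $\liminf$ and then to a further subsequence, $u_n\to u$ a.e. on $\mathcal G$. Since $F\ge0$ by $(F_0)$, Fatou's lemma yields $\Psi(u)\le\liminf\Psi(u_n)$.

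For the weak sequential continuity of $\Phi'$, the quadratic part is bounded and linear, so it is weak-to-weak continuous. It then suffices to show $\langle\Psi'(u_n)-\Psi'(u),\varphi\rangle\to0$ for each fixed $\varphi\in Y$. Given $\varepsilon>0$, I choose a compact subgraph $\mathcal K'$, a finite union of cells, such that $|\varphi|_{L^2(\mathcal G\setminus\mathcal K')}$ and $|\varphi|_{L^{2+\nu}(\mathcal G\setminus\mathcal K')}$ are both small. On the complement, Hölder's inequality and the growth bound give a contribution of at most $C\varepsilon$ uniformly in $n$, because $(u_n)$ is bounded in $L^2\cap L^{2+\nu}$. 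On $\mathcal K'$ I use $u_n\to u$ in $L^2(\mathcal K')\cap L^{2+\nu}(\mathcal K')$; the generalized dominated convergence theorem, or the continuity of the Nemytskii operator $L^{2+\nu}\to L^{(2+\nu)/(1+\nu)}$ on $\mathcal K'$, gives $F_u(x,u_n)\to F_u(x,u)$ in the dual exponents. Every subsequence thus has a sub-subsequence along which the limit is $0$, so the whole sequence converges.

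The main step, and the main obstacle, is the estimate $\|u\|<\zeta\|u^+\|$ on $\Phi_c$. For $u\in\Phi_c$, I use $F\ge0$ together with $\omega|u|_2^2=\omega|u^+|_2^2+\omega|u^-|_2^2$, which holds by $L^2$-orthogonality. Combined with \eqref{eq-3.7}--\eqref{eq-3.8}, this gives
\[
0<c\le\Phi(u)\le\frac12\Big(\frac{a+|\omega|}{a}\|u^+\|^2-\frac{a-|\omega|}{a}\|u^-\|^2\Big).
\]
Here I need the lower bound $\|u^-\|^2-\omega|u^-|_2^2\ge\frac{a-|\omega|}{a}\|u^-\|^2$, which follows from $a|u^-|_2^2\le\|u^-\|^2$ and $|\omega|<a$. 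Hence $\|u^-\|^2\le\frac{a+|\omega|}{a-|\omega|}\|u^+\|^2$, and consequently
\[
\|u\|^2=\|u^+\|^2+\|u^-\|^2\le\frac{2a}{a-|\omega|}\|u^+\|^2.
\]
Since $c>0$ forces $u^+\ne0$, any $\zeta>\sqrt{2a/(a-|\omega|)}$ gives the strict inequality, and in fact $\zeta$ does not depend on $c$. The only care needed is in the sign bookkeeping for the $\omega$ term, which is handled by $(\omega)$ and Lemma~\ref{lem-3.4}.
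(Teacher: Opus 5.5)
Your proof is correct and follows the route the paper indicates: the paper defers to Lemma~4.1 of \cite{yang2025bound} and sketches the same argument in Lemma~\ref{lem-3.12}(a). That argument has three parts: lower semicontinuity from $F\ge 0$ plus local compactness (your Fatou argument is equivalent to the paper's exhaustion by finite unions of cells), weak continuity of $\Phi'$ by splitting off a tail and using H\"older, and the cone estimate from $F\ge0$, $(\omega)$ and Lemma~\ref{lem-3.4}. Your explicit $c$-independent constant $\zeta$ is a small sharpening; like the paper, you implicitly use $F(x,0)=0$, which is needed anyway for $\Psi$ to be finite on $Y$.
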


\begin{proof}
This follows from Lemma~4.1 in \cite{yang2025bound}: the proof uses only the $\mathbb Z^{d}$-periodic structure, local compact embeddings on finite unions of translates of a fundamental cell, and a tail estimate based on the nonnegativity of $F$. Hence we omit it.
\end{proof}

	\begin{Lem}\label{lem-3.6}
There exists $\rho>0$ such that
\[
\eta= \inf\big\{\Phi(u) : u\in Y^{+},\ \|u\|=\rho\big\} > 0.
\]
\end{Lem}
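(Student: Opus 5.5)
For $u\in Y^{+}$ the plan is to bound the quadratic part of $\Phi$ from below and the nonlinear term from above by a higher power of $\|u\|$. Since $u=u^{+}$, \eqref{eq-3.7} (using $(\omega)$ and Lemma~\ref{lem-3.4}) gives
\[
\frac12\big(\|u\|^{2}+\omega|u|_{2}^{2}\big)\ \ge\ \frac{a-|\omega|}{2a}\,\|u\|^{2},
\]
and $a-|\omega|>0$ by $(\omega)$. It then remains to show $\Psi(u)=o(\|u\|^{2})$ as $\|u\|\to0$ in $Y$.

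For the bound on $F$ I will combine $(F_{2})$ with a growth bound at infinity. By $(F_{2})$, for every $\varepsilon>0$ there is $\delta>0$ with $|F_u(x,u)|\le\varepsilon|u|$ for $|u|\le\delta$, uniformly in $x$. For large $|u|$ I use $(F_{5})$: integrating $|F_{uu}|\le C_1(1+|u|^{\nu})$ twice and using $F_u(x,0)=0$, $F(x,0)=0$ (which follow from $(F_{2})$ and $F\ge0$ being $C^1$ with minimum at $0$) gives $|F_u(x,u)|\le C(|u|+|u|^{1+\nu})$. The superquadratic Theorem~\ref{theo-1.2} is also covered, since $(F_{5})$ is assumed there too. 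Hence, fixing some $p>2$, for each $\varepsilon>0$ there is $C_\varepsilon$ with
\[
0\le F(x,u)\le \varepsilon|u|^{2}+C_\varepsilon|u|^{p}\qquad\text{for all }(x,u).
\]
For $|u|\ge\delta$ this holds because $|u|^{2}+|u|^{2+\nu}\lesssim_\delta |u|^{p}$ once $p\ge 2+\nu$; I take $p=3$.

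Integrating over $\mathcal G$ and using Lemma~\ref{lem-3.4} together with the embedding $Y\hookrightarrow L^{p}(\mathcal G,\mathbb C^{2})$ (via $Y\cong H^{1/2}$), I get $|u|_2^2\le a^{-1}\|u\|^2$ and $|u|_p^p\le C\|u\|^p$. Therefore
\[
\Phi(u)\ \ge\ \Big(\frac{a-|\omega|}{2a}-\frac{\varepsilon}{a}\Big)\|u\|^{2}-C_\varepsilon C\|u\|^{p}.
\]
I will choose $\varepsilon=(a-|\omega|)/4$ and then $\rho>0$ small enough that $C_\varepsilon C\rho^{p-2}\le (a-|\omega|)/(8a)$. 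This yields $\eta\ge \frac{a-|\omega|}{8a}\rho^{2}>0$.

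The main obstacle I expect is justifying the growth bound on $F$ uniformly in $x$ using only the stated hypotheses. The alternative is that $(F_3)$ or $(F_7)$(ii) might be needed, but those would give the same form of estimate. The embedding into $L^p$ on the noncompact periodic graph is taken as stated earlier in the paper.
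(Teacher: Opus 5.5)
Your proposal is correct and follows the paper's proof essentially verbatim. Both use the lower bound \eqref{eq-3.7} on $Y^{+}$, derive $F(x,u)\le\varepsilon|u|^{2}+C_\varepsilon|u|^{p}$ from $(F_{2})$ and $(F_{5})$ (the paper takes $p=2+\nu$, you take $p=3$), and choose $\varepsilon$ small and then $\rho$ small.

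One minor fix concerns $F(x,0)=0$. It does not follow from $(F_{2})$ and $F\ge0$ alone, but it does follow from $(F_{4})$ (or $(F_{7})$) by continuity, since $-F(x,0)=\hat F(x,0)=\lim_{u\to0}\hat F(x,u)\ge0$. It is in any case implicit in $\Psi$ being finite on $Y$.
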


\begin{proof}
Set $q=2+\nu\in(2,3)$, where $\nu\in(0,1)$ is given by $(F_{5})$.
Using $(F_{0})$, $(F_{2})$ and $(F_{5})$, one checks that for every $\varepsilon>0$
there exists $C_{\varepsilon}>0$ such that
\[
F(x,u) \le \varepsilon|u|^{2} + C_{\varepsilon}|u|^{q}
\quad\text{for all }(x,u)\in\mathcal{G}\times\mathbb{C}^{2}.
\]
Hence, for all $u\in Y$,
\begin{equation}\label{eq-3.10}
\begin{aligned}
\Psi(u)
&= \int_{\mathcal{G}}F(x,u)\,dx \\
&\le \varepsilon\int_{\mathcal{G}}|u|^{2}\,dx
+ C_{\varepsilon}\int_{\mathcal{G}}|u|^{q}\,dx \\
&\le \frac{\varepsilon}{a}\,\|u\|^{2} + C_{\varepsilon}C_{q}^{\,q}\,\|u\|^{q},
\end{aligned}
\end{equation}
where we used $a|u|_{2}^{2}\le \|u\|^{2}$ (Lemma~\ref{lem-3.4}) and the continuous embedding
$Y\hookrightarrow L^{q}(\mathcal{G},\mathbb{C}^{2})$, i.e. $|u|_{q}\le C_{q}\|u\|$.

Now let $u\in Y^{+}$. Using \eqref{eq-3.7} (with $u^{-}=0$) and \eqref{eq-3.10}, we obtain
\[
\Phi(u)
= \frac{1}{2}\big(\|u\|^{2} + \omega|u|_{2}^{2}\big) - \Psi(u)
\ge \frac{a-|\omega|}{2a}\,\|u\|^{2}
- \frac{\varepsilon}{a}\,\|u\|^{2} - C_{\varepsilon}C_{q}^{\,q}\,\|u\|^{q}.
\]
Choose $\varepsilon>0$ so small that
\[
\delta=\frac{a-|\omega|}{2a}-\frac{\varepsilon}{a}>0,
\]
and then choose $\rho>0$ such that
\[
C_{\varepsilon}C_{q}^{\,q}\,\rho^{\,q-2}\le \frac{\delta}{2}.
\]
For all $u\in Y^{+}$ with $\|u\|=\rho$ we then have
\[
\Phi(u)\ge \delta\rho^{2}-C_{\varepsilon}C_{q}^{\,q}\rho^{q}
\ge \frac{\delta}{2}\,\rho^{2}>0.
\]
Thus $\eta= \inf\{\Phi(u):u\in Y^{+},\ \|u\|=\rho\}\ge \frac{\delta}{2}\rho^{2}>0$.
\end{proof}
	
	As a consequence of Lemmas \ref{lem-3.1} and \ref{lem-3.3} we know that
	\[
	a \le \inf \sigma(|A|) \le a + \sup_{\mathcal{G}} V .
	\]
	By $(F_{3})$ we also have
	\[
	\inf_{\mathcal{G}} b > \sup_{\mathcal{G}} V + a + \omega ,
	\]
	so we can choose a number $\gamma$ such that
	\begin{equation}\label{eq-3.11}
		a + \sup_{\mathcal{G}} V < \gamma < \inf_{\mathcal{G}} b - \omega .
	\end{equation}
	Since $A$ is invariant under the free, cocompact $\mathbb{Z}^{d}$--action on $\mathcal{G}$,
it is a $\mathbb{Z}^{d}$--periodic operator in the sense of Floquet theory. In particular,
$\sigma(A)$ has the usual band--gap structure (and may also contain flat bands).
Since $\gamma>\inf\sigma(|A|)$, the interval $(0,\gamma]$ intersects $\sigma(|A|)$, and hence the
spectral subspace
\[
Y_{0}:=\big(E_{\gamma}-E_{0}\big)L^{2}(\mathcal{G},\mathbb{C}^{2})
\]
is infinite-dimensional.

Moreover, for every $u\in Y_{0}$ the spectral support of $u$ is contained in $(0,\gamma]$ on the
positive part, so by the spectral calculus for $|A|$ we have
\begin{equation}\label{eq-3.12}
(a+\omega)\,|u|_{2}^{2}
\le \|u\|^{2} + \omega|u|_{2}^{2}
\le (\gamma+\omega)\,|u|_{2}^{2}.
\end{equation}

Choose an $L^{2}$-orthonormal sequence $\{e_{n}\}_{n\in\mathbb{N}}\subset Y_{0}$, i.e.
\[
(e_{n},e_{m})_{L^{2}}=\delta_{nm}\quad\text{and}\quad |e_{n}|_{2}=1 \ \text{for all }n.
\]
For each $n\in\mathbb{N}$, define
\[
Y_{n}= \operatorname{span}\{e_{1},\dots,e_{n}\},\qquad
E_{n}= Y^{-} \oplus Y_{n}.
\]

	\begin{Lem}\label{lem-3.7}
		For each $n\in\mathbb{N}$ one has $\sup \Phi(E_{n})<\infty$. Moreover, there exists a sequence $R_{n}>0$ such that
		\[
		\sup \Phi\big(E_{n}\setminus B_{n}\big) < \eta,
		\]
		where $B_{n}=\{u\in E_{n}:\|u\|\le R_{n}\}$ and
		\[
		\eta= \inf\big\{\Phi(u):u\in Y^{+},\ \|u\|=\rho\big\} > 0
		\]
		is given by Lemma~\ref{lem-3.6}.
	\end{Lem}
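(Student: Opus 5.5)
The plan is the standard argument by contradiction for asymptotically linear problems, in the spirit of Szulkin--Weth and Ding. Boundedness from above on $E_n$ is immediate. For $u=v+w$ with $v\in Y^-$ and $w\in Y_n$, we have $F\ge0$ by $(F_0)$. Moreover $Y_n$ is finite dimensional, so all norms on it are equivalent, and by \eqref{eq-3.12} we get $\|w\|^2+\omega|w|_2^2\le(\gamma+\omega)|w|_2^2\le C_n\|w\|^2$. Together with \eqref{eq-3.8} this gives
\[
\Phi(u)\le \tfrac12 C_n\|w\|^2-\tfrac{a-|\omega|}{2a}\|v\|^2 .
\]
This quadratic form is bounded above on bounded sets. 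Its behaviour at infinity is handled by the second claim, and together these give $\sup\Phi(E_n)<\infty$. Alternatively one can use the stronger fact, shown next, that $\Phi\to-\infty$ on $E_n$.

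For the second claim it suffices to show that $\Phi(u)\to-\infty$ as $\|u\|\to\infty$ in $E_n$. Suppose not. Then there are $u_k\in E_n$ with $\|u_k\|\to\infty$ and $\Phi(u_k)\ge c$ for some $c\in\mathbb{R}$. Set $z_k=u_k/\|u_k\|=v_k+w_k$. Dividing by $\|u_k\|^2$ gives
\[
\frac{c}{\|u_k\|^2}\le \tfrac12\bigl(\|w_k\|^2+\omega|w_k|_2^2\bigr)-\tfrac12\bigl(\|v_k\|^2-\omega|v_k|_2^2\bigr)-\int_{\mathcal G}\frac{F(x,u_k)}{\|u_k\|^2}\,dx .
\]
If $w_k\to0$ along a subsequence, then $\|v_k\|\to1$, and \eqref{eq-3.8} makes the right-hand side eventually negative, a contradiction. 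So, after passing to a subsequence (finite dimensionality of $Y_n$), $w_k\to w\neq0$ strongly, $v_k\rightharpoonup v$ in $Y^-$, and $z_k\to z=v+w$ a.e. and in $L^2_{\rm loc}$.

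The key step uses $(F_3)$. Since $|F(x,u)-\tfrac12 b(x)|u|^2|=o(|u|^2)$ and $F\ge0$, Fatou's lemma on bounded subgraphs yields
\[
\liminf_k\int_{\mathcal G}\frac{F(x,u_k)}{\|u_k\|^2}\ge \tfrac12\int_{\mathcal G} b\,|z|^2\ge \tfrac12\inf b\,|z|_2^2 .
\]
Passing to the limit (weak lower semicontinuity of $\|v_k\|^2-\omega|v_k|_2^2$, which is an equivalent norm squared) gives
\[
0\le \tfrac12(\gamma+\omega)|w|_2^2-\tfrac12\bigl(\|v\|^2-\omega|v|_2^2\bigr)-\tfrac12\inf b\,(|v|_2^2+|w|_2^2),
\]
where we used the $L^2$-orthogonality of $Y^-$ and $Y^+$. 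The $v$ terms contribute $-\|v\|^2+\omega|v|_2^2-\inf b\,|v|_2^2\le0$, because $\|v\|^2\ge a|v|_2^2$ and $\inf b>a+\omega$. The remaining terms give $(\gamma+\omega-\inf b)|w|_2^2<0$ by the choice \eqref{eq-3.11}. This is a contradiction since $w\ne0$. Hence $\Phi\to-\infty$ on $E_n$, so some $R_n$ gives $\sup\Phi(E_n\setminus B_n)<\eta$, and $\sup\Phi(E_n)<\infty$ follows.

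The main obstacle is justifying the Fatou step on the noncompact graph, since $z$ need not be the strong $L^2$ limit. One fix is to write $F(x,u)\ge \tfrac12(b(x)-\varepsilon)|u|^2-C_\varepsilon|u|^{?}$; this is awkward. The alternative I will take is pointwise: $F(x,u_k)/\|u_k\|^2\to\tfrac12 b|z|^2$ a.e. on $\{z\neq0\}$ by $(F_3)$, and on $\{z=0\}$ the integrand is nonnegative. Fatou on each finite union of cells followed by exhaustion of $\mathcal G$ then gives the inequality.
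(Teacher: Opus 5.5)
Your proof is correct and follows essentially the same route as the paper. Both argue by contradiction with a normalized sequence, use $F\ge 0$ to show the $Y_n$-component of the limit is nonzero, and then combine a lower bound $\tfrac12\inf b\,|\cdot|_2^2$ on the nonlinear term (from $(F_3)$) with \eqref{eq-3.12} and the gap \eqref{eq-3.11} to make the limiting quadratic form strictly negative.

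The only difference is in how the nonlinear term is handled. You apply Fatou's lemma to the pointwise limit $F(x,u_k)/\|u_k\|^2\to\tfrac12 b|z|^2$ on $\{z\neq0\}$. The paper instead restricts to a compact $\mathcal K_\Lambda$ carrying most of the $L^2$-mass of the limit, and uses strong local convergence together with the splitting $F=\tfrac12 b|u|^2+R$. The obstacle you flag does not arise: once a diagonal subsequence gives $z_k\to z$ a.e. on all of $\mathcal G$, Fatou applies directly on the whole graph.
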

	\begin{proof}
For $u\in E_{n}$, since $F\ge 0$ on $\mathcal{G}\times\mathbb{C}^{2}$, we have
\begin{equation}\label{eq-3.13}
\begin{aligned}
\Phi(u)
&= \frac{1}{2}\|u^{+}\|^{2}
- \frac{1}{2}\|u^{-}\|^{2}
+ \frac{\omega}{2}|u|_{2}^{2}
- \int_{\mathcal{G}} F(x,u)\,dx \\
&\le \frac{1}{2}\|u^{+}\|^{2}
+ \frac{|\omega|}{2}\,|u|_{2}^{2}
\le \frac{1}{2}\|u^{+}\|^{2}
+ \frac{|\omega|}{2a}\,\|u\|^{2},
\end{aligned}
\end{equation}
where in the last inequality we used Lemma~\ref{lem-3.4}. In particular, $\Phi$ is well defined on $E_n$.

Fix $n\in\mathbb{N}$. We claim that $\Phi(u)\to -\infty$ as $\|u\|\to\infty$ with $u\in E_{n}$.
Assume by contradiction that there exist $M>0$ and a sequence $(u_{j})\subset E_{n}$ such that
\[
\|u_{j}\|\to\infty
\quad\text{and}\quad
\Phi(u_{j}) \ge -M \quad\text{for all }j.
\]
Set $v_{j}=u_{j}/\|u_{j}\|$. Then $(v_j)$ is bounded in $Y$ and $\|v_j\|=1$ for all $j$.
Writing $v_j=v_j^-+v_j^+$ with $v_j^-\in Y^{-}$ and $v_j^+\in Y_n$, up to a subsequence,
\[
v_{j} \rightharpoonup v \ \text{in }Y,\qquad
v_{j}^{-} \rightharpoonup v^{-}\ \text{in }Y^{-},\qquad
v_{j}^{+}\to v^{+}\ \text{in }Y_{n}.
\]

Dividing the identity defining $\Phi(u_j)$ by $\|u_j\|^{2}$ we obtain
\begin{equation}\label{eq-3.14}
\frac{\Phi(u_{j})}{\|u_{j}\|^{2}}
= \frac{1}{2}\big(\|v_{j}^{+}\|^{2}
- \|v_{j}^{-}\|^{2}
+ \omega|v_{j}|_{2}^{2}\big)
- \int_{\mathcal{G}} \frac{F(x,u_{j})}{\|u_{j}\|^{2}}\,dx
\ge -\frac{M}{\|u_{j}\|^{2}} = o(1).
\end{equation}
Since $F\ge 0$, \eqref{eq-3.14} yields
\[
\frac{1}{2}\big(\|v_{j}^{+}\|^{2}
- \|v_{j}^{-}\|^{2}
+ \omega|v_{j}|_{2}^{2}\big)\ge o(1).
\]
Using Lemma~\ref{lem-3.4} and \eqref{eq-3.8}, we get
\begin{equation}\label{eq-3.15}
\begin{aligned}
o(1)
&\le \frac{1}{2}\big(\|v_{j}^{+}\|^{2}
- \|v_{j}^{-}\|^{2}
+ \omega|v_{j}|_{2}^{2}\big) \\
&= \|v_{j}^{+}\|^{2}
- \frac{1}{2}\|v_{j}\|^{2}
+ \frac{\omega}{2}|v_{j}|_{2}^{2} \\
&\le \|v_{j}^{+}\|^{2}
- \frac{1}{2}\|v_{j}\|^{2}
+ \frac{|\omega|}{2a}\|v_{j}\|^{2},
\end{aligned}
\end{equation}
and therefore
\[
\|v_{j}^{+}\|^{2}
\ge \frac{a-|\omega|}{2a}\,\|v_{j}\|^{2} + o(1).
\]
In particular, $\liminf_{j\to\infty}\|v_{j}^{+}\|>0$, hence $v^{+}\neq 0$.

Next we prepare the asymptotically linear decomposition. By $(F_{3})$ and $(F_{2})$ one has
$|F_{u}(x,u)|\le C|u|$ for all $(x,u)\in\mathcal{G}\times\mathbb{C}^{2}$ for some $C>0$,
hence
\[
0\le F(x,u)=\int_{0}^{1}F_{u}(x,tu)\cdot u\,dt \le C|u|^{2}.
\]
Set
\[
R(x,u)=F(x,u)-\frac{1}{2}b(x)|u|^{2},
\qquad
b_{0}=\inf_{\mathcal{G}} b.
\]
Then $b$ is bounded (periodic on a cocompact graph), hence $|R(x,u)|\le C_{R}|u|^{2}$ for all $(x,u)$.
Moreover, by $(F_{3})$ we have
\[
\frac{R(x,u)}{|u|^{2}}\to 0\quad\text{as }|u|\to\infty,
\]
uniformly in $x\in\mathcal{G}$.

We now choose a compact set capturing almost all $L^{2}$-mass of $v$.
For $k\in\mathbb{Z}^{d}$ denote $\mathcal{K}_{k}:=T^{k}(\mathcal{K})$. Up to measure zero overlaps,
\[
|v|_{2}^{2}=\sum_{k\in\mathbb{Z}^{d}}\int_{\mathcal{K}_{k}}|v|^{2}\,dx.
\]
Since the series converges, there exists a finite set $\Lambda\subset\mathbb{Z}^{d}$ such that, setting
\[
\mathcal{K}_{\Lambda}:=\bigcup_{k\in\Lambda}\mathcal{K}_{k},
\]
we have
\[
\int_{\mathcal{K}_{\Lambda}}|v|^{2}\,dx \ge |v|_{2}^{2}-\varepsilon_{0},
\]
where $\varepsilon_{0}>0$ will be chosen below. Since $\mathcal{K}_{\Lambda}$ is a compact subgraph,
the embedding $Y\hookrightarrow L^{2}(\mathcal{K}_{\Lambda},\mathbb{C}^{2})$ is compact; thus, up to a subsequence,
\[
v_{j}\to v \quad\text{in }L^{2}(\mathcal{K}_{\Lambda},\mathbb{C}^{2}).
\]

For every $u\in Y$, using $F\ge 0$ we may restrict the nonlinear term:
\[
\Phi(u)
= \frac{1}{2}\|u^{+}\|^{2}
- \frac{1}{2}\|u^{-}\|^{2}
+ \frac{\omega}{2}|u|_{2}^{2}
- \int_{\mathcal{G}}F(x,u)\,dx
\le \frac{1}{2}\|u^{+}\|^{2}
- \frac{1}{2}\|u^{-}\|^{2}
+ \frac{\omega}{2}|u|_{2}^{2}
- \int_{\mathcal{K}_{\Lambda}}F(x,u)\,dx.
\]
Hence, for every $u\in Y$,
\begin{equation}\label{eq-3.16}
\begin{aligned}
\Phi(u)
&\le \frac{1}{2}\|u^{+}\|^{2}
- \frac{1}{2}\|u^{-}\|^{2}
+ \frac{\omega}{2}|u|_{2}^{2}
- \frac{1}{2}\int_{\mathcal{K}_{\Lambda}} b(x)|u|^{2}\,dx
- \int_{\mathcal{K}_{\Lambda}} R(x,u)\,dx \\
&\le \frac{1}{2}\big(\|u^{+}\|^{2}+\omega|u^{+}|_{2}^{2}\big)
- \frac{a-|\omega|}{2a}\,\|u^{-}\|^{2}
- \frac{b_{0}}{2}\int_{\mathcal{K}_{\Lambda}}|u|^{2}\,dx
- \int_{\mathcal{K}_{\Lambda}} R(x,u)\,dx,
\end{aligned}
\end{equation}
where we used \eqref{eq-3.8} and $b(x)\ge b_{0}$.

Now evaluate the quadratic part at the limit $v=v^{-}+v^{+}$. Since $v^{+}\in Y_{n}\subset Y_{0}$,
by \eqref{eq-3.12} we have $\|v^{+}\|^{2}+\omega|v^{+}|_{2}^{2}\le (\gamma+\omega)|v^{+}|_{2}^{2}$.
Using also $b_{0}-\gamma-\omega>0$ from \eqref{eq-3.11}, we obtain
\[
\big(\|v^{+}\|^{2}+\omega|v^{+}|_{2}^{2}\big)
- \frac{a-|\omega|}{a}\,\|v^{-}\|^{2}
- b_{0}|v|_{2}^{2}
\le -\big(b_{0}-\gamma-\omega\big)|v^{+}|_{2}^{2}
- \frac{a-|\omega|}{a}\,\|v^{-}\|^{2}<0.
\]
Choose $\varepsilon_{0}>0$ so small that
\begin{equation}\label{eq-3.17}
\big(\|v^{+}\|^{2}+\omega|v^{+}|_{2}^{2}\big)
- \frac{a-|\omega|}{a}\,\|v^{-}\|^{2}
- b_{0}\int_{\mathcal{K}_{\Lambda}}|v|^{2}\,dx<0,
\end{equation}
which is possible because $\int_{\mathcal{K}_{\Lambda}}|v|^{2}\,dx \ge |v|_{2}^{2}-\varepsilon_{0}$.

Next we show that
\[
\int_{\mathcal{K}_{\Lambda}}\frac{R(x,u_{j})}{\|u_{j}\|^{2}}\,dx \to 0.
\]
Indeed, fix $\varepsilon>0$. By the uniform limit $R(x,u)/|u|^{2}\to 0$ as $|u|\to\infty$,
there exists $R_{\varepsilon}>0$ such that $|R(x,u)|\le \varepsilon |u|^{2}$ for all $|u|\ge R_{\varepsilon}$
and all $x\in\mathcal{G}$. Then, using also $|R(x,u)|\le C_{R}|u|^{2}$, we get
\[
\begin{aligned}
\int_{\mathcal{K}_{\Lambda}}\left|\frac{R(x,u_{j})}{\|u_{j}\|^{2}}\right|dx
&\le \varepsilon\int_{\mathcal{K}_{\Lambda}}|v_{j}|^{2}\,dx
+ C_{R}\int_{\mathcal{K}_{\Lambda}\cap\{|u_{j}|<R_{\varepsilon}\}}\frac{|u_{j}|^{2}}{\|u_{j}\|^{2}}\,dx \\
&\le \varepsilon\int_{\mathcal{K}_{\Lambda}}|v_{j}|^{2}\,dx
+ C_{R}\frac{R_{\varepsilon}^{2}}{\|u_{j}\|^{2}}\,|\mathcal{K}_{\Lambda}|.
\end{aligned}
\]
Letting $j\to\infty$ yields
\[
\limsup_{j\to\infty}\int_{\mathcal{K}_{\Lambda}}\left|\frac{R(x,u_{j})}{\|u_{j}\|^{2}}\right|dx
\le \varepsilon\int_{\mathcal{K}_{\Lambda}}|v|^{2}\,dx.
\]
Since $\varepsilon>0$ is arbitrary, this proves the claim.

Finally, divide \eqref{eq-3.16} (applied to $u=u_j$) by $\|u_j\|^{2}$ and use $u_j=\|u_j\|v_j$:
\[
\frac{\Phi(u_{j})}{\|u_{j}\|^{2}}
\le \frac{1}{2}\big(\|v_{j}^{+}\|^{2}+\omega|v_{j}^{+}|_{2}^{2}\big)
- \frac{a-|\omega|}{2a}\,\|v_{j}^{-}\|^{2}
- \frac{b_{0}}{2}\int_{\mathcal{K}_{\Lambda}}|v_{j}|^{2}\,dx
- \int_{\mathcal{K}_{\Lambda}}\frac{R(x,u_{j})}{\|u_{j}\|^{2}}\,dx.
\]
Taking $\liminf$ on the left-hand side and $\limsup$ on the right-hand side, using
$v_j^+\to v^+$ in $Y_n$, $v_j\to v$ in $L^{2}(\mathcal{K}_{\Lambda})$, and the fact that the last integral
tends to $0$, we obtain
\[
0\le \liminf_{j\to\infty}\frac{\Phi(u_{j})}{\|u_{j}\|^{2}}
\le \frac{1}{2}\big(\|v^{+}\|^{2}+\omega|v^{+}|_{2}^{2}\big)
- \frac{a-|\omega|}{2a}\,\|v^{-}\|^{2}
- \frac{b_{0}}{2}\int_{\mathcal{K}_{\Lambda}}|v|^{2}\,dx < 0,
\]
where the last strict inequality follows from \eqref{eq-3.17}. This is a contradiction. Hence
\[
\Phi(u)\to -\infty \quad\text{as }\|u\|\to\infty,\ u\in E_{n},
\]
and in particular $\sup\Phi(E_{n})<\infty$.

Finally, since $\Phi(u)\to -\infty$ as $\|u\|\to\infty$ on $E_{n}$, we have
\[
\lim_{R\to\infty}\ \sup\{\Phi(u):u\in E_{n},\,\|u\|\ge R\} = -\infty.
\]
Recalling that $\eta>0$ from Lemma~\ref{lem-3.6}, we can choose $R_{n}>0$ large enough so that
\[
\sup\{\Phi(u):u\in E_{n},\,\|u\|\ge R_{n}\} < \eta.
\]
Equivalently, setting $B_{n}:=\{u\in E_{n}:\|u\|\le R_{n}\}$, we obtain
\[
\sup \Phi(E_{n}\setminus B_{n}) < \eta,
\]
as claimed.
\end{proof}

	As a consequence, we have:
\begin{Lem}\label{lem-3.8}
	We have $\Phi(u)\le 0$ for all $u\in\partial Q$, where
	\[
	Q= \{u = u^{-} + s e_{1} : u^{-}\in Y^{-},\ s\ge 0,\ \|u\|\le R_{1}\}.
	\]
\end{Lem}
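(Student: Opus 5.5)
The boundary $\partial Q$, taken relative to the half-space $Y^{-}\oplus\mathbb{R}^{+}e_{1}\subset E_{1}$, splits into two pieces:
\[
\partial Q=\big\{u^{-}\in Y^{-}:\ \|u^{-}\|\le R_{1}\big\}\ \cup\ \big\{u=u^{-}+se_{1}:\ s\ge 0,\ \|u\|=R_{1}\big\}.
\]
I will show $\Phi\le 0$ on each piece separately. First note that $e_{1}\in Y_{0}$, and $Y_{0}$ is the spectral subspace of $A$ for $(0,\gamma]$, so $Y_{0}\subset Y^{+}$. Hence every $u^{-}+se_{1}$ lies in $E_{1}=Y^{-}\oplus Y_{1}$, with $u^{+}=se_{1}$.

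\emph{The piece $s=0$.} Here $u=u^{-}\in Y^{-}$. Since $F\ge 0$ by $(F_{0})$, we get $\Psi(u)\ge 0$. Using \eqref{eq-3.8} with the sign matching $\omega$, we obtain
\[
\Phi(u^{-})=-\tfrac12\big(\|u^{-}\|^{2}-\omega|u^{-}|_{2}^{2}\big)-\Psi(u^{-})\le-\frac{a-|\omega|}{2a}\|u^{-}\|^{2}\le 0 .
\]
The last inequality holds because $|\omega|<a$ by $(\omega)$. This step is purely algebraic and needs no choice of $R_{1}$.

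\emph{The piece $\|u\|=R_{1}$.} Here $u\in E_{1}$ with $\|u\|=R_{1}$. The point is that Lemma~\ref{lem-3.7} as stated only gives $\Phi<\eta$ there, not $\Phi\le 0$. However, its proof establishes more: $\Phi(u)\to-\infty$ as $\|u\|\to\infty$ with $u\in E_{1}$, obtained by contradiction via the asymptotically linear splitting $F=\tfrac12 b|u|^{2}+R$ and the inequality $b_{0}>\gamma+\omega$. Consequently
\[
\lim_{R\to\infty}\sup\{\Phi(u):u\in E_{1},\ \|u\|\ge R\}=-\infty .
\]
So I may choose (enlarging if necessary) $R_{1}>\rho$ such that $\sup\{\Phi(u):u\in E_{1},\ \|u\|\ge R_{1}\}\le 0$. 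Since $0<\eta$, this choice still satisfies the conclusion $\sup\Phi(E_{1}\setminus B_{1})<\eta$ of Lemma~\ref{lem-3.7}. With $R_{1}$ fixed this way, every $u$ on this piece satisfies $\Phi(u)\le 0$.

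The only delicate point is this choice of $R_{1}$: it must be made compatible with Lemma~\ref{lem-3.7}, and it must cover the sphere $\|u\|=R_{1}$ itself and not only the open exterior $E_{1}\setminus B_{1}$. Both follow from the divergence $\Phi\to-\infty$ on $E_{1}$. Combining the two pieces gives $\Phi\le 0<\eta$ on $\partial Q$. This is exactly the hypothesis $\sup\Phi(\partial Q)<\eta$ needed to apply Theorem~\ref{theo-2.2} with $e=e_{1}/\|e_{1}\|$.
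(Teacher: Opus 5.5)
Your proposal is correct and follows the paper's proof. The paper splits $\partial Q$ the same way: the $Y^{-}$ part is handled by $F\ge 0$ and \eqref{eq-3.8}, and the sphere $\|u\|=R_{1}$ in $E_{1}$ by choosing $R_{1}$ large, using that $\Phi\to-\infty$ on $E_{1}$ (established in the proof of Lemma~\ref{lem-3.7}). Your remark about enlarging $R_{1}$ so that it stays compatible with Lemma~\ref{lem-3.7} and with $R_{1}>\rho$ makes explicit what the paper does implicitly when it re-chooses $R_{1}$.
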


\begin{proof}
Since $F\ge0$, we have $\Psi(u)\ge0$ for all $u\in Y$.
If $u\in Y^{-}$, then $u^{+}=0$ and hence
\[
\Phi(u)= -\frac12\|u\|^{2}+\frac{\omega}{2}|u|_{2}^{2}-\Psi(u)
\le -\frac12\big(\|u\|^{2}-\omega|u|_{2}^{2}\big)
\le -\frac{a-|\omega|}{2a}\|u\|^{2}\le0,
\]
where we used $|u|_{2}^{2}\le \frac1a\|u\|^{2}$ from Lemma~\ref{lem-3.4}.
Thus $\Phi\le0$ on $Y^{-}$, in particular on $\{u\in Q:\ s=0\}$.

Now let $E_{1}=Y^{-}\oplus Y_{1}$.
By Lemma~\ref{lem-3.7} (with $n=1$), $\Phi(u)\to-\infty$ as $\|u\|\to\infty$ with $u\in E_{1}$.
Therefore
\[
\lim_{R\to\infty}\sup\{\Phi(u):u\in E_{1},\ \|u\|=R\}=-\infty,
\]
and we can choose $R_{1}>0$ such that
\[
\sup\{\Phi(u):u\in E_{1},\ \|u\|=R_{1}\}\le0.
\]
If $u\in\partial Q$ with $s>0$, then $u\in E_{1}$ and $\|u\|=R_{1}$, hence $\Phi(u)\le0$.
Combining the two parts we conclude that $\Phi(u)\le0$ for all $u\in\partial Q$.
\end{proof}

\begin{Lem}\label{lem-3.9}
If $(\omega)$, $(F_{0})$--$(F_{2})$, $(F_{3})$ and $(F_{4})$ hold, then any $(C)_{c}$-sequence is bounded in $Y$.
\end{Lem}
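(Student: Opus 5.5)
Let $(u_n)$ be a $(C)_c$-sequence. The plan is to first get a uniform bound on $\int\hat F(x,u_n)$ and then argue by contradiction using the asymptotically linear structure, in the spirit of the Schrödinger/Dirac results of Szulkin--Weth and Ding.

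\textbf{Step 1.} Since $\Phi(u)-\frac12\langle\Phi'(u),u\rangle=\int_{\mathcal G}\hat F(x,u)\,dx$ and $(1+\|u_n\|)\|\Phi'(u_n)\|\to0$, we get
\[
\int_{\mathcal G}\hat F(x,u_n)\,dx\le C\quad\text{for all }n .
\]
By $(F_2)$--$(F_3)$ we also have $|F_u(x,u)|\le C|u|$.

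\textbf{Step 2.} Assume by contradiction that $\|u_n\|\to\infty$, and set $v_n=u_n/\|u_n\|$. Test $\Phi'(u_n)$ against $u_n^+-u_n^-$. Using \eqref{eq-3.7}--\eqref{eq-3.8} (more precisely, $\|w\|^2+\omega(\ldots)\ge\frac{a-|\omega|}{a}\|w\|^2$ with the mixed term handled by orthogonality), this gives
\[
\frac{a-|\omega|}{a}\le \int_{\mathcal G}\frac{F_u(x,u_n)\cdot(v_n^+-v_n^-)}{\|u_n\|}\,dx+o(1).
\]
Split $\mathcal G$ into $\{|u_n|<r\}$ and $\{|u_n|\ge r\}$.
\begin{itemize}
\item On $\{|u_n|<r\}$, with $r$ small, $(F_2)$ gives $|F_u(x,u_n)|\le\varepsilon|u_n|$, so this part contributes at most $\varepsilon C$.
\item On $\{|u_n|\ge R\}$, $(F_4)$ together with Step 1 yields $\int_{|u_n|\ge R}|u_n|^\kappa\le C$. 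Hence $|\{|u_n|\ge R\}|\le C R^{-\kappa}$ is small.
\item On $\{r\le|u_n|\le R\}$, the quantity $\hat F$ has a positive minimum (positivity plus compactness in $u$ and periodicity in $x$). So the measure of this set is bounded, and the integrand there is $\le C|v_n|\,|v_n^\pm|$.
\end{itemize}
The hardest term is the large-$|u_n|$ region, where $|F_u|\le C|u_n|$ only gives $C\int_{|u_n|\ge R}|v_n||v_n^\pm|$. To control it I would use Hölder with exponent $p>2$:
\[
\int_{|u_n|\ge R}|v_n|\,|v_n^{\pm}|\le |v_n|_{p}\,|v_n^\pm|_p\,\big|\{|u_n|\ge R\}\big|^{1-2/p}.
\]
This is small uniformly in $n$ once $R$ is large, since $Y\hookrightarrow L^p$. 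Consequently the inequality forces
\[
\liminf_n \int_{\{r\le|u_n|\le R\}}|v_n|^2>0 ,
\]
and therefore $v_n$ is non-vanishing.

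\textbf{Step 3 (main obstacle).} By Lemma~\ref{lem:2.2}, after $\mathbb Z^d$-translations (invariance via $(V_1)$, $(F_1)$) we get $v_n\rightharpoonup v\ne0$ in $Y$ and $v_n\to v$ in $L^2_{\mathrm{loc}}$. To reach a contradiction I expect to use the bound on $\int\hat F(x,u_n)$ rather than the limiting equation. On the set where $v\ne0$ we have $|u_n|=\|u_n\||v_n|\to\infty$ pointwise a.e. Fatou and $(F_4)$ then give
\[
\int_{\mathcal G}\hat F(x,u_n)\ge c_1\int_{\{v\neq 0\}\cap\mathcal K_\Lambda}|u_n|^{\kappa}\,dx\to\infty
\]
on a compact $\mathcal K_\Lambda$ with positive-measure support of $v$. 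This contradicts Step 1, so $(u_n)$ is bounded.

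This last step is actually easy. The genuine difficulty is Step 2, namely extracting non-vanishing from the $\kappa<2$ bound. If the Hölder/measure argument above fails, I would instead pass to the limit in the equation: $v$ weakly solves $(A+\omega)v=b(x)v$, using $(F_3)$ and local convergence. Then I would use \eqref{eq-3.11} and spectral considerations only as a fallback, since the Fatou argument already suffices once $v\ne0$.
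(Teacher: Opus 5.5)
Your proof is correct, but it finishes by a longer route than the paper's. The paper never uses Lemma~\ref{lem:2.2} or $\mathbb Z^d$-translations here.

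\textbf{The paper's route.} It first upgrades $(F_4)$, using positivity of $\hat F$, compactness in $u$ and periodicity in $x$, to $\hat F(x,u)\ge a_\rho|u|^\kappa$ for all $|u|\ge\rho$. This gives $\int_{\{|u_j|\ge\rho\}}|w_j|^\kappa\le C/(a_\rho\|u_j\|^\kappa)\to0$. Interpolating against a bounded $L^{\bar s}$ norm then yields $\int_{\{|u_j|\ge\rho\}}|w_j|^2\to0$ for every fixed $\rho>0$. With only the two regions $\{|u_j|<\rho_\varepsilon\}$ and $\{|u_j|\ge\rho_\varepsilon\}$, testing with $u_j^+-u_j^-$ immediately gives $1-|\omega|/a\le C\varepsilon$.

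\textbf{Your route.} You control $\{|u_n|\ge R\}$ by a measure bound plus H\"older, which is uniform in $n$ and fine. You then deduce non-vanishing. From there you need translation invariance ($(V_1)$, $(F_1)$, isometric action), Lions' lemma, and a blow-up of $\int\hat F$ on the support of the weak limit.

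\textbf{An unused contradiction in your Step 2.} On $\{r\le|u_n|\le R\}$ you have $|v_n|\le R/\|u_n\|$, and you already showed this set has measure at most $C/m$, where $m=\min\hat F$ on the annulus. Hence
\[
\int_{\{r\le|u_n|\le R\}}|v_n|^2\,dx\le \frac{C R^2}{m\,\|u_n\|^2}\longrightarrow 0,
\]
which contradicts the $\liminf>0$ you derived. So your three-region split alone closes the proof, about as elementarily as the paper's two-region argument. Step~3 and the concentration--compactness machinery are superfluous.

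\textbf{A minor correction to Step~3.} Step~3 is still valid as written, with one fix. The first inequality there should use $\hat F(x,u)\ge c_1|u|^\kappa-c_1R^\kappa$ for all $u$, or be done via Fatou. This is because $(F_4)$ gives the lower bound only for $|u|\ge R$.
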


\begin{proof}
Let $(u_{j})\subset Y$ be a $(C)_{c}$-sequence, namely
\[
\Phi(u_{j}) \to c
\quad\text{and}\quad
\big(1+\|u_{j}\|\big)\,\|\Phi^{\prime}(u_{j})\|_{Y^{*}} \to 0 .
\]
Recall that
\[
\Phi(u)-\frac12\langle \Phi'(u),u\rangle
=\int_{\mathcal{G}} \hat F(x,u)\,dx,
\qquad
\hat F(x,u)=\frac12\,F_{u}(x,u)\cdot u - F(x,u).
\]
By $(F_{4})$ we have $\hat F(x,u)\ge 0$. Hence, for $j$ large, there exists $C>0$ such that
\begin{equation}\label{eq-3.18}
0 \le \int_{\mathcal{G}} \hat F(x,u_{j})\,dx
=\Phi(u_{j})-\frac12\langle \Phi'(u_{j}),u_{j}\rangle
\le C .
\end{equation}

Assume by contradiction that $\|u_{j}\|\to\infty$ and set
\[
w_{j}=\frac{u_{j}}{\|u_{j}\|}.
\]
Then $\|w_{j}\|=1$ for all $j$. By Lemma~\ref{lem-3.4} and the embedding
$Y\hookrightarrow L^{s}(\mathcal{G},\mathbb{C}^{2})$ for every $s\in[2,\infty)$, we have
\[
|w_{j}|_{s}\le C_{s}\quad\text{for all }s\in[2,\infty),
\]
with constants $C_{s}$ independent of $j$.

Fix $\kappa\in(0,2)$ and $R>0$ given by $(F_{4})$, so that
\[
\hat F(x,u)\ge c_{1}|u|^{\kappa}
\quad\text{for all }x\in\mathcal{G},\ |u|\ge R.
\]
We claim that for every $\rho>0$ there exists $a_{\rho}>0$ such that
\begin{equation}\label{eq-3.19}
\hat F(x,u)\ge a_{\rho}|u|^{\kappa}
\quad\text{for all }x\in\mathcal{G},\ |u|\ge \rho.
\end{equation}
Indeed, if $\rho\ge R$ then \eqref{eq-3.19} holds with $a_{\rho}=c_{1}$.
If $0<\rho<R$, use the $\mathbb{Z}^{d}$-periodicity of $F$ and the cocompactness to reduce $x$ to a fixed
fundamental cell $\mathcal{K}$: the set
\[
\mathcal{C}_{\rho}=\big\{(x,u)\in \mathcal{K}\times\mathbb{C}^{2}:\ \rho\le |u|\le R\big\}
\]
is compact, and $(F_{4})$ implies $\hat F(x,u)>0$ for $u\neq 0$, hence
\[
m_{\rho}=\min_{\mathcal{C}_{\rho}}\hat F>0.
\]
Therefore, for $\rho\le |u|\le R$ and $x\in\mathcal{G}$ we have $\hat F(x,u)\ge m_{\rho}\ge \frac{m_{\rho}}{R^{\kappa}}|u|^{\kappa}$,
while for $|u|\ge R$ we have $\hat F(x,u)\ge c_{1}|u|^{\kappa}$. Thus \eqref{eq-3.19} holds with
\[
a_{\rho}=\min\left\{c_{1},\frac{m_{\rho}}{R^{\kappa}}\right\}>0.
\]

For $\rho>0$ set
\[
I_{j}(\rho)=\big\{x\in\mathcal{G}:\ |u_{j}(x)|\ge\rho\big\},
\qquad
I_{j}^{c}(\rho)=\mathcal{G}\setminus I_{j}(\rho).
\]
Using \eqref{eq-3.18} and \eqref{eq-3.19}, we obtain
\[
\int_{I_{j}(\rho)} |w_{j}|^{\kappa}\,dx
=\int_{I_{j}(\rho)} \frac{|u_{j}|^{\kappa}}{\|u_{j}\|^{\kappa}}\,dx
\le \int_{I_{j}(\rho)} \frac{\hat F(x,u_{j})}{a_{\rho}\,\|u_{j}\|^{\kappa}}\,dx
\le \frac{C}{a_{\rho}\,\|u_{j}\|^{\kappa}}
\to 0.
\]
Let $s\in(\kappa,\infty)$ and choose $\bar s\in(\max\{2,s\},\infty)$. By Hölder interpolation,
\begin{equation}\label{eq-3.20}
\begin{aligned}
\int_{I_{j}(\rho)}|w_{j}|^{s}\,dx
&\le
\left(\int_{I_{j}(\rho)}|w_{j}|^{\kappa}\,dx\right)^{\frac{\bar s-s}{\bar s-\kappa}}
\left(\int_{I_{j}(\rho)}|w_{j}|^{\bar s}\,dx\right)^{\frac{s-\kappa}{\bar s-\kappa}}
\to 0,
\end{aligned}
\end{equation}
since the first factor tends to $0$ and the second is uniformly bounded.
In particular, taking $s=2>\kappa$ yields
\[
\int_{I_{j}(\rho)}|w_{j}|^{2}\,dx\to 0
\quad\text{for every }\rho>0.
\]

Next, by $(F_{2})$ we know that for any $\varepsilon>0$ there exists $\rho_{\varepsilon}>0$ such that
\[
|F_{u}(x,u)|\le \varepsilon |u|
\quad\text{for all }x\in\mathcal{G},\ |u|\le \rho_{\varepsilon}.
\]
By $(F_{3})$ and the continuity of $F_{u}$, there exists $C_{0}>0$ such that
\[
|F_{u}(x,u)|\le C_{0}|u|
\quad\text{for all }(x,u)\in\mathcal{G}\times\mathbb{C}^{2}.
\]

Now test the Cerami condition in the direction $u_{j}^{+}-u_{j}^{-}$. From the expression of $\Phi'$ we have
\[
\langle \Phi^{\prime}(u_{j}),u_{j}^{+}-u_{j}^{-}\rangle
= \|u_{j}\|^{2}
+ \omega\big(|u_{j}^{+}|_{2}^{2}-|u_{j}^{-}|_{2}^{2}\big)
- \int_{\mathcal{G}} F_{u}(x,u_{j})\cdot (u_{j}^{+}-u_{j}^{-})\,dx .
\]
Since $\big(1+\|u_{j}\|\big)\|\Phi^{\prime}(u_{j})\|_{Y^{*}}\to 0$, dividing by $\|u_{j}\|^{2}$ gives
\[
\frac{\langle \Phi^{\prime}(u_{j}),u_{j}^{+}-u_{j}^{-}\rangle}{\|u_{j}\|^{2}}=o(1).
\]
Using Lemma~\ref{lem-3.4} we have $|u_{j}|_{2}^{2}\le \frac{1}{a}\|u_{j}\|^{2}$, hence
\[
1+\omega\frac{|u_{j}^{+}|_{2}^{2}-|u_{j}^{-}|_{2}^{2}}{\|u_{j}\|^{2}}
\ge 1-\frac{|\omega|}{a}.
\]
Therefore,
\[
1-\frac{|\omega|}{a}
\le o(1)+\frac{1}{\|u_{j}\|^{2}}
\int_{\mathcal{G}} |F_{u}(x,u_{j})|\,|u_{j}^{+}-u_{j}^{-}|\,dx.
\]
Write $u_{j}=\|u_{j}\|w_{j}$ and note that $|u_{j}^{+}-u_{j}^{-}|=\|u_{j}\||w_{j}^{+}-w_{j}^{-}|$. Defining
$\frac{|F_{u}(x,u_{j})|}{|u_{j}|}=0$ on $\{u_{j}=0\}$, we get
\[
1-\frac{|\omega|}{a}
\le o(1)+\int_{\mathcal{G}}\frac{|F_{u}(x,u_{j})|}{|u_{j}|}\,|w_{j}|\,|w_{j}^{+}-w_{j}^{-}|\,dx.
\]
Split the integral as $\int_{\mathcal{G}}=\int_{I_{j}^{c}(\rho_{\varepsilon})}+\int_{I_{j}(\rho_{\varepsilon})}$.
On $I_{j}^{c}(\rho_{\varepsilon})$ we have $|u_{j}|\le \rho_{\varepsilon}$ and hence
$\frac{|F_{u}(x,u_{j})|}{|u_{j}|}\le \varepsilon$. Thus
\[
\int_{I_{j}^{c}(\rho_{\varepsilon})}\frac{|F_{u}(x,u_{j})|}{|u_{j}|}\,|w_{j}|\,|w_{j}^{+}-w_{j}^{-}|
\le \varepsilon\,|w_{j}|_{2}\,|w_{j}^{+}-w_{j}^{-}|_{2}
\le C\,\varepsilon,
\]
where we used $|w_{j}|_{2}\le \frac{1}{\sqrt a}\|w_{j}\|=\frac{1}{\sqrt a}$ and
$|w_{j}^{+}-w_{j}^{-}|_{2}\le |w_{j}|_{2}$.

On $I_{j}(\rho_{\varepsilon})$ we use $\frac{|F_{u}(x,u_{j})|}{|u_{j}|}\le C_{0}$ to obtain
\[
\int_{I_{j}(\rho_{\varepsilon})}\frac{|F_{u}(x,u_{j})|}{|u_{j}|}\,|w_{j}|\,|w_{j}^{+}-w_{j}^{-}|
\le C_{0}\int_{I_{j}(\rho_{\varepsilon})}|w_{j}|\,|w_{j}^{+}-w_{j}^{-}|
\le C_{0}\left(\int_{I_{j}(\rho_{\varepsilon})}|w_{j}|^{2}\right)^{1/2}|w_{j}^{+}-w_{j}^{-}|_{2}\to 0,
\]
since $\int_{I_{j}(\rho_{\varepsilon})}|w_{j}|^{2}\to 0$ and $|w_{j}^{+}-w_{j}^{-}|_{2}$ is bounded.

Putting the two parts together yields
\[
1-\frac{|\omega|}{a}\le o(1)+C\,\varepsilon.
\]
Letting $j\to\infty$ and then $\varepsilon\to 0$ gives $1-\frac{|\omega|}{a}\le 0$, contradicting $(\omega)$.
Hence $(u_{j})$ must be bounded in $Y$.
\end{proof}

	Let $K=\{u \in Y \setminus\{0\} : \Phi^{\prime}(u)=0\}$ be the set of nontrivial critical points. To prove condition $(\Phi_{5})$, we argue by contradiction and assume that
	\begin{equation}\label{eq-3.21}
		K/\mathbb{Z}^{d} \text{ is a finite set.}
	\end{equation}
	Under this assumption we will show that $(\Phi_{5})$ holds; then, by Theorem~\ref{theo-2.3}, $\Phi$ possesses an unbounded sequence of positive critical values, which contradicts \eqref{eq-3.21}. Thus, in the sequel we work under the hypothesis \eqref{eq-3.21}.
	
	Let $\mathcal{F}$ be a set consisting of arbitrarily chosen representatives of the $\mathbb{Z}^{d}$-orbits of $K$. Then $\mathcal{F}$ is a finite set by \eqref{eq-3.21}, and since $\Phi^{\prime}$ is odd, we may assume $\mathcal{F}=-\mathcal{F}$. If $u \neq 0$ is a critical point of $\Phi$ and $\Omega \subset \mathcal{K}$, then using $(F_4)$ we have
	\[
	\Phi(u)=\Phi(u)-\frac{1}{2} \Phi^{\prime}(u)u
	=\int_{\mathcal{K}}\hat{F}(x,u)\,dx
	\geqslant \int_{\Omega} \hat{F}(x, u)\,dx>0.
	\]
	It follows that there exist $0<\theta \leqslant \vartheta$ such that
	\begin{equation}\label{eq-3.22}
		\theta<\min_{\mathcal{F}} \Phi=\min_{K} \Phi \leqslant \max_{K} \Phi=\max_{\mathcal{F}} \Phi<\vartheta .
	\end{equation}
	Let $[r]$ denote the integer part of $r \in \mathbb{R}$.

    \begin{Lem}\label{lem-3.10}
Assume \eqref{eq-3.21} holds and let $(u_m)\subset Y$ be a $(C)_c$--sequence.
Then either
\begin{itemize}
\item[(i)] $u_m\to 0$ in $Y$ and $c=0$, or
\item[(ii)] $c\ge \theta$ and there exist an integer $\ell\le [c/\theta]$, points
$\bar u_1,\dots,\bar u_\ell\in\mathcal F$, a subsequence (still denoted $(u_m)$), and sequences
$(a_m^i)\subset\mathbb Z^d$ such that
\[
\Big\|u_m-\sum_{i=1}^\ell (a_m^i*\bar u_i)\Big\|\to 0,
\qquad
\sum_{i=1}^\ell \Phi(\bar u_i)=c.
\]
\end{itemize}
\end{Lem}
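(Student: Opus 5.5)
This is the standard splitting argument of Kryszewski–Szulkin / Bartsch–Ding for $\mathbb Z^d$-invariant strongly indefinite functionals, adapted to the graph setting. By Lemma~\ref{lem-3.9} the sequence $(u_m)$ is bounded in $Y$. Since $\hat F\ge0$, we have $c=\lim\big(\Phi(u_m)-\tfrac12\langle\Phi'(u_m),u_m\rangle\big)\ge0$. The proof then extracts profiles one at a time, using the concentration-compactness Lemma~\ref{lem:2.2} at each step.

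\textbf{Step 1 (vanishing forces (i)).} Suppose the vanishing alternative of Lemma~\ref{lem:2.2} holds. Then $u_m\to0$ in $L^q(\mathcal G,\mathbb C^2)$ for every $q\in(2,\infty)$. Test $\Phi'(u_m)$ with $u_m^+-u_m^-$, as in the proof of Lemma~\ref{lem-3.9}:
\[
\Big(1-\frac{|\omega|}{a}\Big)\|u_m\|^2\le o(1)+\int_{\mathcal G}|F_u(x,u_m)|\,|u_m^+-u_m^-|\,dx .
\]
By $(F_2)$ and the linear-growth bound $|F_u(x,u)|\le C|u|$ coming from $(F_3)$, we get $|F_u(x,u)|\le\varepsilon|u|+C_\varepsilon|u|^{q-1}$ for some $q>2$. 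Hence the integral is at most $\varepsilon C+C_\varepsilon|u_m|_q^{q-1}\|u_m\|$. This gives $u_m\to0$ in $Y$, and then $c=\lim\Phi(u_m)=0$.

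\textbf{Step 2 (a nonvanishing profile).} Otherwise Lemma~\ref{lem:2.2}(ii) gives $a_m^1\in\mathbb Z^d$ such that $v_m=(-a_m^1)*u_m\rightharpoonup \bar u_1\neq0$ in $Y$, with local $L^q$ convergence; the sign convention on the translation is immaterial.
\begin{itemize}
\item By $\mathbb Z^d$-invariance of $\Phi$ (from $(V_1)$, $(F_1)$ and the commutation of $A$ with translations, so that the splitting $Y^\pm$ and the norm are invariant), $(v_m)$ is again a $(C)_c$-sequence.
\item By the weak sequential continuity of $\Phi'$ in Lemma~\ref{lem-3.5}, $\Phi'(\bar u_1)=0$. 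Thus $\bar u_1\in K$, and after a further fixed translation, which we absorb into $a_m^1$, we may take $\bar u_1\in\mathcal F$.
\end{itemize}
Set $u_m^{(1)}=u_m-a_m^1*\bar u_1$.

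\textbf{Step 3 (splitting and iteration).} We need two splitting identities:
\[
\Phi(u_m^{(1)})\to c-\Phi(\bar u_1),\qquad \Phi'(u_m^{(1)})\to0\ \text{in } Y^*.
\]
The quadratic part splits exactly by weak convergence. The nonlinear part needs
\[
\int_{\mathcal G}\big[F(x,v_m)-F(x,v_m-\bar u_1)-F(x,\bar u_1)\big]dx\to0
\]
and the analogous statement for $F_u$ tested uniformly on the unit ball. This is the main technical obstacle. I would prove it with a Brezis–Lieb argument in the spirit of Lemma~\ref{lemma2.1}:
\begin{itemize}
\item use the $C^2$ bound $(F_5)$, $|F_{uu}|\le C(1+|u|^\nu)$, to control $|F_u(x,v+w)-F_u(x,v)|$ by $C(1+|v|^\nu+|w|^\nu)|w|$;
\item split $\mathcal G$ into a large finite union of cells, where $v_m\to\bar u_1$ in $L^q$, and its complement, where the tail of $\bar u_1$ is small in every $L^q$.
\end{itemize}
The derivative statement must be uniform over test functions in the unit ball, which is why $(F_5)$ is needed rather than just continuity.

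Now $(u_m^{(1)})$ is a Cerami sequence at level $c-\Phi(\bar u_1)$; the factor $(1+\|u\|)$ is harmless because the sequence is bounded. Apply the dichotomy to it. If it vanishes, Step 1 gives $u_m^{(1)}\to0$ in $Y$ and $c=\Phi(\bar u_1)$. Otherwise extract $\bar u_2$, and so on. At each stage the accumulated level satisfies $c-\sum_{i\le k}\Phi(\bar u_i)\ge0$, by the nonnegativity of $\int\hat F$ applied to the remainder sequence. Each $\Phi(\bar u_i)>\theta$ by \eqref{eq-3.22}, so the process stops after $\ell\le[c/\theta]$ steps. It yields $\|u_m-\sum_i a_m^i*\bar u_i\|\to0$ and $\sum_i\Phi(\bar u_i)=c$. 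Since $\ell\ge1$ in this case, $c\ge\Phi(\bar u_1)>\theta$.

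One point to check is that the translation sequences are well separated, i.e. $|a_m^i-a_m^j|\to\infty$ for $i\neq j$. This follows because each later profile comes from a remainder that converges weakly to $0$ after the earlier translations. The separation is what makes the cross terms in $\Phi$ vanish.
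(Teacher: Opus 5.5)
Your proposal is correct and follows essentially the same route as the paper: boundedness via Lemma~\ref{lem-3.9}, the dichotomy of Lemma~\ref{lem:2.2}, translation plus weak sequential continuity of $\Phi'$ to produce a profile in $\mathcal F$, a Brezis--Lieb splitting of $\Phi$ and $\Phi'$, and an iteration that terminates because each profile has $\Phi(\bar u_i)>\theta$. Your organization is slightly cleaner than the paper's in two places: you show directly that vanishing forces $u_m\to0$ in $Y$, and you sketch how $(F_5)$ gives the splitting, which the paper only asserts; note also that in your one-profile-at-a-time scheme the separation $|a_m^i-a_m^j|\to\infty$ is true but not actually needed to kill the cross terms.
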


\begin{proof}
Let $(u_m)\subset Y$ be a $(C)_c$--sequence, namely
\[
\Phi(u_m)\to c,
\qquad
(1+\|u_m\|)\|\Phi'(u_m)\|_{Y^*}\to 0.
\]
By Lemma~\ref{lem-3.9}, $(u_m)$ is bounded in $Y$. Using
\[
\Phi(u)-\frac12\langle \Phi'(u),u\rangle=\int_{\mathcal G}\hat F(x,u)\,dx,
\qquad
\hat F(x,u)=\frac12 F_u(x,u)\cdot u-F(x,u),
\]
and $(F_4)$, we have $\hat F\ge 0$ and hence
\[
0\le \int_{\mathcal G}\hat F(x,u_m)\,dx
=\Phi(u_m)-\frac12\langle \Phi'(u_m),u_m\rangle\to c,
\]
so $c\ge 0$.

If $u_m\to 0$ in $Y$, then by Lemma~\ref{lem-3.4} and the compact embedding
$Y\hookrightarrow L^q(\mathcal K,\mathbb C^2)$ for every $q\in[2,\infty)$, we have $u_m\to 0$ in
$L^q(\mathcal K,\mathbb C^2)$. Using $(F_2)$--$(F_3)$ and the corresponding growth estimate for $F$,
it follows that $\Psi(u_m)=\int_{\mathcal G}F(x,u_m)\,dx\to 0$, and therefore $\Phi(u_m)\to 0$,
so $c=0$. This is alternative (i).

From now on assume $u_m\nrightarrow 0$ in $Y$. Then $c>0$. We claim that
\[
\lambda=\lim_{m\to\infty}|u_m|_2^2>0,
\]
up to a subsequence. Indeed, by $(F_2)$--$(F_3)$ and continuity of $F_u$, there exists $C_0>0$ such that
$|F_u(x,u)|\le C_0|u|$ for all $(x,u)\in\mathcal G\times\mathbb C^2$.
Testing $\Phi'(u_m)$ on $u_m^+-u_m^-$ yields
\[
\langle \Phi'(u_m),u_m^+-u_m^-\rangle
=\|u_m\|^2+\omega\big(|u_m^+|_2^2-|u_m^-|_2^2\big)
-\int_{\mathcal G}F_u(x,u_m)\cdot (u_m^+-u_m^-)\,dx.
\]
Since $|\,|u_m^+|_2^2-|u_m^-|_2^2|\le |u_m|_2^2$ and $|u_m^+-u_m^-|_2=|u_m|_2$, we get
\[
\left|\int_{\mathcal G}F_u(x,u_m)\cdot (u_m^+-u_m^-)\,dx\right|
\le C_0\int_{\mathcal G}|u_m|\,|u_m^+-u_m^-|\,dx
\le C_0|u_m|_2^2.
\]
Moreover $\|u_m^+-u_m^-\|\le \|u_m\|$, hence
$|\langle \Phi'(u_m),u_m^+-u_m^-\rangle|\le \|\Phi'(u_m)\|_{Y^*}\|u_m\|=o(1)$.
Therefore,
\[
\|u_m\|^2\le o(1)+( |\omega|+C_0)|u_m|_2^2.
\]
If $|u_m|_2\to 0$, then $\|u_m\|\to 0$, which implies $\Phi(u_m)\to 0$, contradicting $c>0$.
Hence $\lambda>0$.

Define the normalized sequence $\Psi_m=u_m/|u_m|_2$. Then $(\Psi_m)$ is bounded in $Y$ and
$|\Psi_m|_2=1$. We apply Lemma~\ref{lem:2.2} to $(\Psi_m)$.
The vanishing alternative (i) in Lemma~\ref{lem:2.2} is impossible: if it held, then
$\Psi_m\to 0$ in $L^p(\mathcal G)$ for every $p\in(2,\infty)$, hence $u_m=|u_m|_2\Psi_m\to 0$ in
$L^p(\mathcal K)$ for all $p>2$. Using again the growth
assumptions, we would get $\Psi(u_m)\to 0$, and testing $\Phi'(u_m)$ on $u_m^+-u_m^-$ would force
$\|u_m\|\to 0$, contradicting $c>0$. Therefore Lemma~\ref{lem:2.2}(ii) holds.

Hence there exist $(a_m)\subset\mathbb Z^d$ and $\Psi\in Y\setminus\{0\}$ such that, setting
$\Psi_m:=a_m*\Psi_m$, we have
\[
\Psi_m\rightharpoonup \Psi\ \text{in }Y,
\qquad
\Psi_m\to \Psi\ \text{in }L^p_{\mathrm{loc}}(\mathcal G)\ \text{for all }p\in[2,\infty).
\]
Define $v_m=a_m*u_m=|u_m|_2\,\Psi_m$. Since $|u_m|_2^2\to\lambda>0$, we obtain
\[
v_m\rightharpoonup v=\sqrt{\lambda}\,\Psi\ \text{in }Y,
\qquad
v_m\to v\ \text{in }L^p_{\mathrm{loc}}(\mathcal G)\ \text{for all }p\in[2,\infty),
\]
and $v\not\equiv 0$.

By $\mathbb Z^d$--invariance of $\Phi$ and $\Phi'$, $(v_m)$ is still a $(C)_c$--sequence and
$\Phi'(v_m)\to 0$ in $Y^*$. Since $\Phi':Y\to Y^*$ is weakly sequentially continuous (Lemma~\ref{lem-3.5}),
from $v_m\rightharpoonup v$ we get $\Phi'(v_m)\rightharpoonup \Phi'(v)$ in $Y^*$, hence $\Phi'(v)=0$.
Therefore $v\in K\setminus\{0\}$ and $\Phi(v)>0$. By \eqref{eq-3.22},
\[
\theta<\Phi(v)\le \vartheta.
\]
Since $\mathcal F$ contains representatives of the $\mathbb Z^d$--orbits of $K$, there exist $\bar u_1\in\mathcal F$
and $b\in\mathbb Z^d$ such that $v=b*\bar u_1$. Replacing $a_m$ by $a_m+b$, we may assume $v=\bar u_1$.

Set $w_m=v_m-\bar u_1$. Using Lemma~\ref{lemma2.1} and the local strong
convergence $v_m\to \bar u_1$ in $L^p(\mathcal K)$ for all $p\in[2,\infty)$, we obtain
\begin{equation}\label{eq:BL-splitting-new}
\Phi(v_m)=\Phi(\bar u_1)+\Phi(w_m)+o(1),
\qquad
\|\Phi'(v_m)-\Phi'(w_m)\|_{Y^*}=o(1).
\end{equation}
Since $\Phi'(\bar u_1)=0$ and $\Phi(v_m)\to c$, it follows that $(w_m)$ is a $(C)_{c-\Phi(\bar u_1)}$--sequence.

If $c=\Phi(\bar u_1)$, then $\Phi(w_m)\to 0$ and $\Phi'(w_m)\to 0$ in $Y^*$.
If $w_m\nrightarrow 0$ in $Y$, we may apply the above extraction argument to $(w_m)$ and obtain a nontrivial critical point $ u\in K\setminus\{0\}$ with $\Phi( u)\ge \theta$.
Using again the splitting \eqref{eq:BL-splitting-new} along a translated subsequence would give
$\liminf_m\Phi(w_m)\ge \Phi( u)\ge \theta$, contradicting $\Phi(w_m)\to 0$.
Hence $w_m\to 0$ in $Y$, and therefore
\[
\|u_m-(-a_m)*\bar u_1\|=\|v_m-\bar u_1\|=\|w_m\|\to 0,
\]
which is alternative (ii) with $\ell=1$ and $c=\Phi(\bar u_1)\ge \theta$.

Finally, assume $c>\Phi(\bar u_1)$. Then $c_1=c-\Phi(\bar u_1)>0$ and $(w_m)$ is a $(C)_{c_1}$--sequence which does
not converge to $0$ in $Y$. Repeating the above extraction on $(w_m)$, we obtain a second profile
$\bar u_2\in\mathcal F$, translations $(a_m^2)\subset\mathbb Z^d$ and a new remainder $(w_m^{(2)})$ such that
\[
\Big\|w_m-(a_m^2*\bar u_2)-w_m^{(2)}\Big\|\to 0,
\qquad
(w_m^{(2)})\ \text{is a }(C)_{c-\Phi(\bar u_1)-\Phi(\bar u_2)}\text{--sequence}.
\]
Moreover, choosing the translations so that $|a_m^1-a_m^2|\to\infty$ (where $a_m^1:=-a_m$),
the profiles concentrate in asymptotically disjoint cells and the splitting \eqref{eq:BL-splitting-new}
extends to the sum of the two translated profiles plus the new remainder.

Iterating this procedure, at step $k$ we either obtain that the remainder converges to $0$ in $Y$, or we extract
another profile $\bar u_k\in\mathcal F$ with $\Phi(\bar u_k)\ge \theta$ and a translation sequence $(a_m^k)\subset\mathbb Z^d$.
Since each extracted profile carries energy at least $\theta$, the iteration must stop after at most $[c/\theta]$ steps.
Therefore there exist $\ell\le [c/\theta]$, $\bar u_1,\dots,\bar u_\ell\in\mathcal F$ and sequences
$(a_m^i)\subset\mathbb Z^d$ such that
\[
\Big\|u_m-\sum_{i=1}^\ell (a_m^i*\bar u_i)\Big\|\to 0,
\qquad
\sum_{i=1}^\ell \Phi(\bar u_i)=c.
\]
This is alternative (ii) and the proof is complete.
\end{proof}

	\par
	For $\ell \in \mathbb{N}$ and a finite set $\mathcal{B} \subset Y$ we define
	\[
	[\mathcal{B}, \ell]
	=\left\{\sum_{i=1}^{j}\big(a_{i} * u_{i}\big):
	1 \leq j \leq \ell,\ a_{i} \in \mathbb{Z}^{d},\ u_{i} \in \mathcal{B}\right\}.
	\]
	An argument similar to that in \cite{MR1070929} shows that
	\begin{equation}\label{eq-3.25}
		\inf \left\{\big\|u-u^{\prime}\big\|: u, u^{\prime} \in[\mathcal{B}, \ell],\ u \neq u^{\prime}\right\}>0.
	\end{equation}
	As a consequence of Lemma \ref{lem-3.10} we have the following:

    \begin{Lem}\label{lem-3.11}
Assume \eqref{eq-3.21} holds. Then $\Phi$ satisfies $(\Phi_{5})$.
\end{Lem}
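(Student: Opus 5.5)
We verify alternative (i) of $(\Phi_{5})$. Given an interval $I\subset(0,\infty)$, set $\ell=[\sup I/\theta]$ (finite if $I$ is bounded; for unbounded $I$ we use that being a $(C)_c$-attractor is a pointwise condition in $c$ and take the union over $c\in I$ of the sets below). Define
\[
\mathcal A=[\mathcal F,\ell].
\]
We then establish three properties of $\mathcal A$ in turn.

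\textbf{Step 1: $\mathcal A$ is a $(C)_I$-attractor.} Fix $c\in I$, $\varepsilon,\delta>0$ and a $(C)_c$-sequence $(u_m)$. We argue by contradiction: suppose some subsequence stays outside $U_\varepsilon(\mathcal A\cap\Phi_{c-\delta}^{c+\delta})$. Since $c>0$, alternative (i) of Lemma~\ref{lem-3.10} is excluded, so alternative (ii) applies to this subsequence. It yields $\ell'\le[c/\theta]\le\ell$, profiles $\bar u_i\in\mathcal F$ and translations $a_m^i$ such that $z_m:=\sum_i a_m^i*\bar u_i\in[\mathcal F,\ell]$ and $\|u_m-z_m\|\to0$. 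It remains to check that $z_m\in\Phi_{c-\delta}^{c+\delta}$ for large $m$. Since $\Phi$ is uniformly continuous on bounded sets (it is $C^1$ with $\Phi'$ bounded on bounded sets, by the growth bounds from $(F_3)$ and $(F_5)$), we get $\Phi(z_m)-\Phi(u_m)\to0$, hence $\Phi(z_m)\to c$. Thus $u_m\in U_\varepsilon(\mathcal A\cap\Phi_{c-\delta}^{c+\delta})$ eventually, a contradiction.

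\textbf{Step 2: $P_N\mathcal A$ is bounded.} Here $P_N=P^+$ is the orthogonal projection onto $Y^{+}$. The $\mathbb Z^d$-action commutes with $A$, hence with its spectral projections, and it is isometric on $Y$. Therefore
\[
\Big\|P^+\sum_{i=1}^j a_i*u_i\Big\|\le\sum_{i=1}^j\|u_i^+\|\le\ell\max_{\mathcal F}\|u\|,
\]
so $P^+\mathcal A$ is bounded.

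\textbf{Step 3: the separation condition.} This is the main obstacle. The bound \eqref{eq-3.25} separates $[\mathcal B,\ell]$ in the full norm, but $(\Phi_5)$(i) requires separation of the $P^+$-components, $\inf\{\|P^+(u-v)\|:u,v\in\mathcal A,\ P^+(u-v)\ne0\}>0$. I would apply \eqref{eq-3.25} with $\mathcal B=P^+\mathcal F$. Because $P^+$ commutes with translations, $P^+[\mathcal F,\ell]=[P^+\mathcal F,\ell]$, and by \eqref{eq-3.25} this set is uniformly discrete. The set $P^+\mathcal F$ is finite. It contains no zero element, since every critical point $u\ne0$ has $\Phi(u)>0$, which forces $u^+\ne0$. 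For \eqref{eq-3.25} applied to $P^+\mathcal F$, I would repeat the argument of \cite{MR1070929}: given sums with translations and $\|u-u'\|\to0$, profiles whose translations drift apart decouple, because $(a*w,w')\to0$ as $|a|\to\infty$ for fixed $w,w'\in Y$. This decoupling follows from density of compactly supported elements in $Y$ and isometry of the action, and it forces the two configurations to coincide eventually. Combining Steps 1--3 gives $(\Phi_5)$(i).
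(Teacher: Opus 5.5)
Your proposal is correct and follows the paper's proof. It uses the same attractor $\mathcal A=[\mathcal F,\ell]$, gets the attractor property from Lemma~\ref{lem-3.10}, bounds $P^+\mathcal A$ using that the $\mathbb Z^d$-action is isometric and preserves the splitting, and obtains separation from \eqref{eq-3.25} applied to $\mathcal B=P^+\mathcal F$ via $P^+[\mathcal F,\ell]=[P^+\mathcal F,\ell]$. Your additional check that $\Phi(z_m)\to c$, so that $z_m\in\Phi_{c-\delta}^{c+\delta}$, fills in a step the paper leaves out; your sketch of \eqref{eq-3.25} via decoupling of drifting translates is also sound.

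The one point to drop is the parenthetical about unbounded $I$. Taking the union over all $\ell$ would make $P^+\mathcal A$ unbounded, contradicting your Step~2. Only compact intervals are needed, and the paper's proof also treats only compact $I$.
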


\begin{proof}
Let $I\subset(0,\infty)$ be a compact interval and set $d=\max I$.

If $d<\theta$, then for any $(C)_c$-sequence with $c\in I$ Lemma~\ref{lem-3.10} forces $c=0$,
which is impossible since $I\subset(0,\infty)$. Hence there are no $(C)_c$-sequences with $c\in I$,
and $(\Phi_5)$ holds trivially.

From now on assume $d\ge \theta$ and set
\[
\ell=[d/\theta]\ge 1,
\qquad
\mathcal A=[\mathcal F,\ell].
\]
Let $P^{+}:Y\to Y^{+}$ be the orthogonal projection. Since the $\mathbb Z^{d}$-action is isometric on $Y$
and commutes with the splitting $Y=Y^{-}\oplus Y^{+}$, we have
\[
P^{+}[\mathcal F,\ell]=[P^{+}\mathcal F,\ell].
\]
Applying \eqref{eq-3.25} with $\mathcal B=P^{+}\mathcal F$ yields
\[
\inf\Big\{\|u_{1}^{+}-u_{2}^{+}\|:\ u_{1},u_{2}\in\mathcal A,\ u_{1}^{+}\ne u_{2}^{+}\Big\}>0.
\]

Now let $(u_m)$ be a $(C)_c$-sequence with $c\in I$. By Lemma~\ref{lem-3.10} there exist
$\ell_{0}\le [c/\theta]\le \ell$, profiles $\bar u_{i}\in\mathcal F$ and translations $a_m^{i}\in\mathbb Z^{d}$
such that
\[
\Big\|u_m-\sum_{i=1}^{\ell_{0}}(a_m^{i}*\bar u_{i})\Big\|\to 0 \quad\text{in }Y.
\]
Since $\sum_{i=1}^{\ell_{0}}(a_m^{i}*\bar u_{i})\in[\mathcal F,\ell]\;=\;\mathcal A$, it follows that
$\mathrm{dist}(u_m,\mathcal A)\to 0$. Hence $\mathcal A$ is a $(C)_I$-attractor.

Finally, for every $u\in\mathcal A$ we have
\[
\|u\|\le \ell\,\max\{\|\bar u\|:\bar u\in\mathcal F\},
\]
so $\mathcal A$ is bounded in $Y$. Therefore $\Phi$ satisfies $(\Phi_{5})$.
\end{proof}

	\noindent\textbf{Proof of Theorem \ref{theo-1.1}.}
	We first prove the existence statement. Recall that
	\[
	\Phi(u)
	= \frac{1}{2}\big(\|u^{+}\|^{2}-\|u^{-}\|^{2}+\omega|u|_{2}^{2}\big)
	- \Psi(u),
	\qquad
	\Psi(u) = \int_{\mathcal{K}}F(x,u)\,dx,
	\]
	defined on \(Y=Y^{-}\oplus Y^{+}=\mathcal{D}(|A|^{1/2})\), where the splitting is given by the spectral
	decomposition of \(A\).
	
	Set
	\[
	M=Y^{-},\qquad N=Y^{+}.
	\]
	By Lemma~\ref{lem-3.5} and Theorem~\ref{theo-2.4}, the functional \(\Phi\) satisfies
	\((\Phi_{0})\) and \((\Phi_{1})\) with respect to the splitting \(Y=M\oplus N\).
	Lemma~\ref{lem-3.6} yields \((\Phi_{2})\), and Lemma~\ref{lem-3.8} gives the geometric condition \((\Phi_{3})\).
	Hence all assumptions of the abstract linking theorem, Theorem~\ref{theo-2.2}, are fulfilled.
	
	Therefore there exists a level \(c\ge\eta>0\) and a sequence \((u_m)\subset Y\) such that
	\[
	\Phi(u_m)\to c,
	\qquad
	(1+\|u_m\|)\,\|\Phi'(u_m)\|_{Y^*}\to 0,
	\]
	that is, \((u_m)\) is a \((C)_c\)-sequence at level \(c\ge\eta\).
	By Lemma~\ref{lem-3.9} the sequence $(u_m)$ is bounded in $Y$.
Up to a subsequence we may assume
\[
u_m\rightharpoonup \tilde u \quad\text{in }Y.
\]

We claim that $(u_m)$ does not vanish in the sense of Lemma~\ref{lem:2.2}.
Assume by contradiction that the vanishing alternative holds. Then
\[
u_m\to 0 \quad\text{in }L^{q}(\mathcal G,\mathbb C^{2}) \ \text{for every }q\in(2,\infty).
\]
Using $(F_{0})$--$(F_{2})$ and the estimate $|F_u(x,u)|\le C|u|$ from $(F_{3})$, we obtain
\[
\int_{\mathcal G}F(x,u_m)\,dx \to 0,
\qquad
\int_{\mathcal G}\hat F(x,u_m)\,dx \to 0,
\]
where
\[
\hat F(x,u)=\frac12\,F_u(x,u)\cdot u - F(x,u).
\]
Since
\[
\Phi(u)-\frac12\langle \Phi'(u),u\rangle=\int_{\mathcal G}\hat F(x,u)\,dx,
\]
and $(1+\|u_m\|)\,\|\Phi'(u_m)\|_{Y^*}\to 0$, we have
\[
c=\lim_{m\to\infty}\Phi(u_m)
=\lim_{m\to\infty}\left(\int_{\mathcal G}\hat F(x,u_m)\,dx
+\frac12\langle \Phi'(u_m),u_m\rangle\right)=0,
\]
which contradicts $c\ge \eta>0$. Therefore vanishing is impossible.

Hence we are in the compactness modulo translations alternative of Lemma~\ref{lem:2.2}.
There exist a sequence $(a_m)\subset\mathbb Z^{d}$ and $u\in Y\setminus\{0\}$ such that, setting
\[
v_m=a_m*u_m,
\]
we have
\[
v_m\rightharpoonup u \quad\text{in }Y,
\qquad
v_m\to u \quad\text{in }L^{q}(\mathcal K,\mathbb C^{2})\ \text{for all }q\in[2,\infty),
\]
and almost everywhere on $\mathcal K$.

By $\mathbb Z^{d}$-invariance of $\Phi$ and $\Phi'$, the sequence $(v_m)$ is still a $(C)_c$-sequence and
\[
\Phi(v_m)=\Phi(u_m)\to c,
\qquad
(1+\|v_m\|)\,\|\Phi'(v_m)\|_{Y^*}\to 0.
\]
By Lemma~\ref{lem-3.5}, $\Phi'$ is weakly sequentially continuous on $Y$. Since
$\Phi'(v_m)\to 0$ in $Y^*$ and $v_m\rightharpoonup u$ in $Y$, we obtain
\[
\Phi'(u)=0.
\]
Thus $u\ne 0$ is a critical point of $\Phi$. By Proposition~\ref{pro-3.1}, every nontrivial critical point of \(\Phi\) corresponds to a bound
	state of frequency \(\omega\) for the NLDE \eqref{eq-1.5}. This proves the existence of at least one
	bound state.
	
	We now prove the multiplicity statement. Assume by contradiction that the NLDE \eqref{eq-1.5} has only finitely many
	geometrically distinct bound states. In other words, \eqref{eq-3.21} holds, i.e.,
	$K/\mathbb{Z}^{d}$ is a finite set.
	
	By Lemma~\ref{lem-3.5}, Lemma~\ref{lem-3.6}, Lemma~\ref{lem-3.7} and
	Lemma~\ref{lem-3.11}, the functional $\Phi$ satisfies
	\[
	(\Phi_{0})\text{--}(\Phi_{5}).
	\]
	Therefore, by Theorem~\ref{theo-2.3}, $\Phi$ possesses an unbounded sequence
	of critical values
	\[
	0 < c_{1} \le c_{2} \le \cdots,\qquad c_{n}\to+\infty,
	\]
	which contradicts the finiteness of $K/\mathbb{Z}^{d}$ in \eqref{eq-3.21}.
	Hence NLDE \eqref{eq-1.5} must have infinitely many geometrically distinct bound
	states, completing the proof of Theorem~\ref{theo-1.1}.

	\subsection{Proof of Theorem \ref{theo-1.2}}
	
	We consider as before the functionals
	\[
	\Psi(u)=\int_{\mathcal{G}} F(x,u)\,dx,
	\qquad
	\Phi(u)
	=\frac{1}{2}\big(\|u^{+}\|^{2}-\|u^{-}\|^{2}+\omega|u|_{2}^{2}\big)-\Psi(u),
	\]
	defined on \(Y\), see \eqref{eq-3.6}.Choose \(\gamma>\gamma_{0}=a+|\omega|+\sup_{\mathcal{G}}V\), and set
	\[
	Y_{0}=\big(E_{\gamma}-E_{\gamma_0}\big)L^{2}(\mathcal{G},\mathbb{C}^{2}).
	\]
	Then \((Y_{n})_{n\in\mathbb{N}}\) is an increasing sequence of finite-dimensional subspaces of \(Y^{+}\), and
	\begin{equation}\label{eq-3.27}
		\gamma_{0}|u|_{2}^{2}\le\|u\|^{2}\le\gamma|u|_{2}^{2}
		\quad\text{for all }u\in Y_{0},
	\end{equation}
	by the spectral calculus for \(|A|\).
	
	\begin{Lem}\label{lem-3.12}
Assume $(\omega)$, $(F_{0})$--$(F_{2})$ and $(F_{6})$--$(F_{7})$. Then the following hold:
\begin{itemize}
\item[$(a)$] $\Psi$ is weakly sequentially lower semicontinuous on $Y$ and $\Phi':Y\to Y^*$ is weakly sequentially continuous.
Moreover, for every $c>0$ there exists $\zeta>0$ such that
\[
\|u\|<\zeta\|u^{+}\|\quad\text{for all }u\in\Phi_{c}=\{u\in Y:\Phi(u)\ge c\}.
\]
\item[$(b)$] There exists $\rho>0$ such that
\[
\eta=\inf\{\Phi(u):u\in Y^{+},\ \|u\|=\rho\}>0.
\]
\item[$(c)$] For every $n\in\mathbb N$ one has $\sup\Phi(E_{n})<\infty$, and there exists $R_{n}>0$ such that
\[
\sup\Phi(E_{n}\setminus B_{n})\le \inf\Phi(B_{\rho}),
\]
where $E_n=Y^-\oplus Y_n$, $B_{n}=\{u\in E_{n}:\|u\|\le R_{n}\}$ and $B_{\rho}=\{u\in Y^{+}:\|u\|\le\rho\}$.
\end{itemize}
\end{Lem}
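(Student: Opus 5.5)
Parts $(a)$ and $(b)$ will be handled by reduction to what is already proved; part $(c)$ needs a new argument based on $(F_6)$.

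For $(a)$, note that Lemma~\ref{lem-3.5} only used $(F_0)$--$(F_2)$, $(F_5)$ and the nonnegativity of $F$, so $(a)$ follows by the same argument. For the continuity statements we need the growth bound $|F_u(x,u)|\le C(|u|+|u|^{p-1})$ for some $p>2$. This comes from $(F_5)$: integrate $|F_{uu}|\le C_1(1+|u|^\nu)$ from $0$ and use $F_u(x,0)=0$ from $(F_2)$. With this bound $\Psi$ is $C^1$ on $Y$, since $Y\hookrightarrow L^q$ for all $q\ge2$. Weak sequential continuity of $\Phi'$ then follows from local compactness of $Y\hookrightarrow L^q(\mathcal K_\Lambda)$ on finite unions of cells, together with a density argument on test functions. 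Weak lower semicontinuity of $\Psi$ is Fatou's lemma, using $F\ge0$ and a.e.\ convergence on compact subgraphs. For the estimate $\|u\|<\zeta\|u^+\|$ on $\Phi_c$, we use $F\ge0$ and \eqref{eq-3.8}:
\[
c\le\Phi(u)\le\tfrac{a+|\omega|}{2a}\|u^+\|^2-\tfrac{a-|\omega|}{2a}\|u^-\|^2 .
\]
This gives $\|u^-\|^2\le\frac{a+|\omega|}{a-|\omega|}\|u^+\|^2$, and $\|u^+\|>0$ because $c>0$. So $\zeta^2=1+\frac{a+|\omega|}{a-|\omega|}$, slightly enlarged to make the inequality strict, works.

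For $(b)$ we repeat the proof of Lemma~\ref{lem-3.6} verbatim. It only needed $F(x,u)\le\varepsilon|u|^2+C_\varepsilon|u|^q$, which again follows from $(F_2)$ and $(F_5)$.

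For $(c)$, since $0\in B_\rho$ and $\Phi(0)=0$, we have $\inf\Phi(B_\rho)\le0$. It therefore suffices to show $\Phi(u)\to-\infty$ as $\|u\|\to\infty$ in $E_n$. Actually we must reach a value below $\inf\Phi(B_\rho)$, which is some finite number $\ge -C\rho^2$; divergence to $-\infty$ handles that. The upper bound $\sup\Phi(E_n)<\infty$ then follows as in \eqref{eq-3.13}, combined with this coercivity.

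We argue by contradiction as in Lemma~\ref{lem-3.7}. Take $u_j\in E_n$ with $\|u_j\|\to\infty$ and $\Phi(u_j)\ge -M$, and set $v_j=u_j/\|u_j\|$. Then $v_j^+\to v^+$ in the finite-dimensional space $Y_n$, and the computation \eqref{eq-3.15} again gives $\|v^+\|^2\ge\frac{a-|\omega|}{2a}>0$, so $v^+\ne0$.

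The main point is to show $v=v^-+v^+\neq0$ and derive a contradiction from superquadratic growth. First, $v^+\in Y_n\subset Y_0$ and $v^-\perp v^+$ in $L^2$, so $|v|_2^2\ge|v^+|_2^2>0$ and $v\ne0$. Second, since $v_j\to v$ in $L^2_{\rm loc}$ and a.e.\ along a subsequence, on the set $\{v\ne0\}$, which has positive measure inside some compact $\mathcal K_\Lambda$, we have $|u_j(x)|\to\infty$. By $(F_6)$, $F(x,u_j)/\|u_j\|^2=\frac{F(x,u_j)}{|u_j|^2}|v_j|^2\to\infty$ there. Fatou's lemma, restricted to $\mathcal K_\Lambda$ and using $F\ge0$, then gives $\int F(x,u_j)/\|u_j\|^2\to\infty$. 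Meanwhile the quadratic part of $\Phi(u_j)/\|u_j\|^2$ is bounded, so $\Phi(u_j)/\|u_j\|^2\to-\infty$, contradicting $\Phi(u_j)\ge-M$.

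Finally, choose $R_n$ so that $\Phi<\inf\Phi(B_\rho)$ on $E_n\setminus B_n$. I expect the only delicate step to be ensuring $v\neq0$ despite weak convergence on a noncompact graph. The $L^2$-orthogonality of $Y_n$ to $Y^-$ resolves it, since the nonzero $v^+$ lies in a finite-dimensional space where weak and strong convergence coincide.
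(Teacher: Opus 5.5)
Your proof is correct, and parts $(a)$ and $(b)$ are essentially the paper's argument. Part $(c)$, however, takes a different route.

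\textbf{How part (c) differs.} The paper splits cases according to whether $\|u^-\|$ stays bounded.
\begin{itemize}
\item[(1)] It asserts that $\|u^-\|\to\infty$ alone forces $\Phi(u)\to-\infty$.
\item[(2)] In the remaining case it normalizes by $\|u^+\|$. Then $w_j^-=u_j^-/\|u_j^+\|\to0$, so $w_j\to w_0^+$ strongly in $Y$, and $w_0^+\neq0$ is automatic from $\|w_j^+\|=1$ in the finite-dimensional $Y_n$. Fatou and $(F_6)$ on some $\mathcal K_\Lambda$ finish.
\end{itemize}
You instead run a single contradiction argument normalized by $\|u_j\|$, as in Lemma~\ref{lem-3.7}. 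You get $v^+\neq0$ from \eqref{eq-3.15}, which needs only $F\ge0$ and $\Phi(u_j)\ge-M$. Then $v\neq0$ is immediate, since $P^+v=v^+$. Fatou on a positive-measure subset of $\{v\ne0\}\cap\mathcal K_\Lambda$ gives $\Phi(u_j)/\|u_j\|^2\to-\infty$.

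\textbf{What each route buys.} The paper's normalization gives strong convergence of the rescaled sequence without \eqref{eq-3.15}, but only when $\|u^-\|$ is bounded. Its first case is not justified by the bound it displays: $\frac12\|u^+\|^2-\frac12\|u^-\|^2+\frac{|\omega|}{2a}\|u\|^2$ need not tend to $-\infty$ when $\|u^+\|$ grows together with $\|u^-\|$. Your single normalization covers all regimes at once, so it is the more robust argument.

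\textbf{One correction.} Lemma~\ref{lem-3.12} does not assume $(F_5)$. So you should not derive the growth bound from it, nor rely on Lemmas~\ref{lem-3.5} and~\ref{lem-3.6}, whose proofs use $(F_5)$. The paper's own proof appeals to an unspecified ``assumed upper growth''; within the stated hypotheses this has to come from $(F_7)(ii)$. Since $F\ge0$, you have $\hat F(x,u)\le\frac12|F_u(x,u)|\,|u|$, so $(F_7)(ii)$ gives, for $|u|\ge r$,
\[
|F_u(x,u)|^{\sigma-1}\le \tfrac{c_3}{2}\,|u|^{\sigma+1},\qquad\text{i.e.}\qquad |F_u(x,u)|\le C|u|^{p-1},\quad p=\frac{2\sigma}{\sigma-1}>2 .
\]
Combine this with three facts:
\begin{itemize}
\item[(1)] $(F_2)$;
\item[(2)] continuity of $F_u$;
\item[(3)] $(F_1)$, which reduces $x$ to the compact cell $\mathcal K$ and so bounds $F_u$ on $\mathcal G\times\{\rho\le|u|\le r\}$.
\end{itemize}
Together these give $|F_u(x,u)|\le\varepsilon|u|+C_\varepsilon|u|^{p-1}$ and $F(x,u)\le\varepsilon|u|^2+C_\varepsilon|u|^{p}$. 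Since $Y\hookrightarrow L^q$ for every $q\in[2,\infty)$, your arguments for $(a)$ and $(b)$ then go through unchanged with this $p$ in place of $2+\nu$.
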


\begin{proof}
$(a)$ Let $u_j\rightharpoonup u$ in $Y$. Fix a finite set $\Lambda\subset\mathbb Z^d$ and set
\[
\mathcal K_\Lambda=\bigcup_{k\in\Lambda}T^k(\mathcal K).
\]
Since $\mathcal K_\Lambda$ is compact and the $Y$-norm is equivalent to the $H^{1/2}$-norm (Lemma~\ref{lem-3.4}),
the embedding $Y\hookrightarrow L^{p}(\mathcal K_\Lambda,\mathbb C^{2})$ is compact for every $p\in[2,\infty)$.
Hence $u_j\to u$ in $L^{p}(\mathcal K_\Lambda)$ for every $p\in[2,\infty)$ and a.e. on $\mathcal K_\Lambda$.
Using $(F_0)$--$(F_2)$ and the assumed upper growth, we obtain
\[
\int_{\mathcal K_\Lambda}F(x,u_j)\,dx\to \int_{\mathcal K_\Lambda}F(x,u)\,dx.
\]
Since $F\ge 0$, we have for every $\Lambda$
\[
\int_{\mathcal K_\Lambda}F(x,u)\,dx
=\lim_{j\to\infty}\int_{\mathcal K_\Lambda}F(x,u_j)\,dx
\le \liminf_{j\to\infty}\int_{\mathcal G}F(x,u_j)\,dx,
\]
and taking the supremum over finite $\Lambda$ yields
\[
\Psi(u)=\int_{\mathcal G}F(x,u)\,dx
\le \liminf_{j\to\infty}\int_{\mathcal G}F(x,u_j)\,dx
=\liminf_{j\to\infty}\Psi(u_j),
\]
that is, $\Psi$ is weakly sequentially lower semicontinuous.

To show weak sequential continuity of $\Phi'$, let $v\in Y$ be fixed. The linear part is weakly continuous, so it suffices to prove
\[
\int_{\mathcal G}F_u(x,u_j)\cdot v\,dx \to \int_{\mathcal G}F_u(x,u)\cdot v\,dx.
\]
Split $\mathcal G=\mathcal K_\Lambda\cup(\mathcal G\setminus\mathcal K_\Lambda)$. On $\mathcal K_\Lambda$ we have
$u_j\to u$ in $L^p(\mathcal K_\Lambda)$, hence $F_u(x,u_j)\to F_u(x,u)$ in $L^{p'}(\mathcal K_\Lambda)$ for a suitable pair $(p,p')$,
and therefore the integral over $\mathcal K_\Lambda$ converges.
On the complement we use Hölder:
\[
\left|\int_{\mathcal G\setminus\mathcal K_\Lambda}\big(F_u(x,u_j)-F_u(x,u)\big)\cdot v\,dx\right|
\le \|F_u(\cdot,u_j)-F_u(\cdot,u)\|_{L^{p'}(\mathcal G)}\,\|v\|_{L^{p}(\mathcal G\setminus\mathcal K_\Lambda)}.
\]
The first factor is bounded uniformly in $j$ by the assumed growth and boundedness of $(u_j)$ in $Y$,
while the second factor can be made arbitrarily small by choosing $\Lambda$ large, since $v\in L^p(\mathcal G)$.
Thus the whole integral converges, proving weak sequential continuity of $\Phi'$.

Finally, the estimate $\|u\|<\zeta\|u^+\|$ on the superlevel set $\Phi_c$ follows exactly as in Lemma~\ref{lem-3.5},
using only $(\omega)$, Lemma~\ref{lem-3.4} and $F\ge 0$.

$(b)$ By $(F_2)$ and the assumed upper growth, there exist $p>2$ and, for every $\varepsilon>0$, a constant $C_\varepsilon>0$ such that
\[
F(x,u)\le \varepsilon |u|^2 + C_\varepsilon |u|^p
\quad\text{for all }(x,u)\in\mathcal G\times\mathbb C^2.
\]
Hence, for all $u\in Y$,
\[
\Psi(u)\le \varepsilon |u|_2^2 + C_\varepsilon |u|_p^p
\le \frac{\varepsilon}{a}\|u\|^2 + C_\varepsilon C_p^p \|u\|^p,
\]
where we used $a|u|_2^2\le \|u\|^2$ and $|u|_p\le C_p\|u\|$.
If $u\in Y^+$, then by \eqref{eq-3.7} we get
\[
\Phi(u)\ge \frac{a-|\omega|}{2a}\|u\|^2-\frac{\varepsilon}{a}\|u\|^2-C_\varepsilon C_p^p\|u\|^p.
\]
Choose $\varepsilon>0$ so small that $\delta=\frac{a-|\omega|}{2a}-\frac{\varepsilon}{a}>0$, and then choose $\rho>0$ such that
$C_\varepsilon C_p^p\rho^{p-2}\le \frac{\delta}{2}$. Then for $\|u\|=\rho$,
\[
\Phi(u)\ge \frac{\delta}{2}\rho^2>0,
\]
hence $\eta>0$.

$(c)$ Fix $n\in\mathbb N$ and let $u=u^-+u^+\in E_n=Y^-\oplus Y_n$. Since $F\ge 0$,
\[
\Phi(u)\le \frac12\|u^+\|^2-\frac12\|u^-\|^2+\frac{|\omega|}{2}|u|_2^2
\le \frac12\|u^+\|^2-\frac12\|u^-\|^2+\frac{|\omega|}{2a}\|u\|^2.
\]
In particular, if $\|u^-\|\to\infty$ along a sequence in $E_n$, then $\Phi(u)\to-\infty$.

It remains to consider sequences with $\|u\|\to\infty$ and $\|u^-\|$ bounded. Then necessarily $\|u^+\|\to\infty$ with $u^+\in Y_n$.
Since $Y_n$ is finite dimensional, all norms on $Y_n$ are equivalent; in particular, there exists $C_n>0$ such that
\[
|u^+|_\infty \le C_n\|u^+\|,\qquad |u^+|_2 \ge \frac{1}{C_n}\|u^+\|.
\]
Let $w=u/\|u^+\|$, so $w^+\in Y_n$ and $\|w^+\|=1$. Up to a subsequence, $w^+\to w_0^+$ in $Y_n$, hence in $L^2_{\mathrm{loc}}$,
and $w_0^+\ne 0$. Choose a finite $\Lambda$ such that
\[
\int_{\mathcal K_\Lambda}|w_0^+|^2\,dx>0.
\]
Then for $j$ large,
\[
\int_{\mathcal K_\Lambda}|w_j|^2\,dx \ge \frac12\int_{\mathcal K_\Lambda}|w_0^+|^2\,dx = c_\Lambda>0.
\]
By $(F_6)$, for every $M>0$ there exists $R(M)>0$ such that
\[
F(x,z)\ge M|z|^2 \quad\text{for all }x\in\mathcal G,\ |z|\ge R(M).
\]
Since $u_j=\|u_j^+\|\,w_j$ and $\|u_j^+\|\to\infty$, we have $|u_j(x)|\to\infty$ for a.e. $x$ with $w_0^+(x)\ne 0$,
hence for $j$ large $|u_j(x)|\ge R(M)$ on a subset of $\mathcal K_\Lambda$ of positive measure.
Using Fatou’s lemma and the lower bound above we obtain
\[
\liminf_{j\to\infty}\frac{1}{\|u_j^+\|^2}\int_{\mathcal K_\Lambda}F(x,u_j)\,dx
\ge M\liminf_{j\to\infty}\int_{\mathcal K_\Lambda}|w_j|^2\,dx
\ge M c_\Lambda.
\]
Since $F\ge 0$,
\[
\frac{\Phi(u_j)}{\|u_j^+\|^2}
\le \frac12\frac{\|u_j^+\|^2}{\|u_j^+\|^2}
+\frac{|\omega|}{2a}\frac{\|u_j\|^2}{\|u_j^+\|^2}
-\frac{1}{\|u_j^+\|^2}\int_{\mathcal K_\Lambda}F(x,u_j)\,dx.
\]
The middle ratio is bounded, while the last term is at least $M c_\Lambda$ in the limit inferior. Choosing $M$ so large that
$M c_\Lambda$ dominates the bounded positive contributions, we get $\Phi(u_j)\to-\infty$, a contradiction to boundedness from below.
Therefore, $\Phi(u)\to-\infty$ as $\|u\|\to\infty$ with $u\in E_n$.

Consequently $\sup\Phi(E_n)<\infty$, and since $\Phi(u)\to-\infty$ along $\|u\|\to\infty$ in $E_n$, we can choose $R_n>0$ such that
\[
\sup\{\Phi(u):u\in E_n,\ \|u\|\ge R_n\}\le \inf\Phi(B_\rho).
\]
This is exactly $\sup\Phi(E_n\setminus B_n)\le \inf\Phi(B_\rho)$.
\end{proof}
	
	As a consequence of Lemma \ref{lem-3.12} we have the following results.
	
	\begin{Lem}\label{lem-3.13}
		Under the assumptions of Lemma \ref{lem-3.12}, one has
		\[
		\Phi(u)\le 0 \quad\text{for all } u\in\partial Q,
		\]
		where
		\[
		Q= \big\{u = u^{-} + s e_{1} : u^{-}\in Y^{-},\ s\ge 0,\ \|u\|\le R_{1}\big\},
		\]
		and \(R_{1}>0\) is given by Lemma \ref{lem-3.12}(c) for \(n=1\).
	\end{Lem}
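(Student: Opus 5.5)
The argument mirrors Lemma~\ref{lem-3.8}: split $\partial Q$ into the part lying in $Y^{-}$ (where $s=0$) and the part on the sphere $\{\|u\|=R_{1}\}$ inside $E_{1}=Y^{-}\oplus\mathbb{R}e_{1}$. Here $e_{1}$ is the first element of the $L^{2}$-orthonormal family chosen in the spectral subspace $Y_{0}=(E_{\gamma}-E_{\gamma_0})L^{2}\subset Y^{+}$, so $E_{1}$ coincides with the space $E_{1}$ of Lemma~\ref{lem-3.12}(c). We regard $Q$ as a subset of the finite-codimension-one half-space $Y^{-}\oplus\mathbb{R}^{+}e_{1}$; its relative boundary consists exactly of points with $s=0$, $\|u^{-}\|\le R_{1}$, and points with $s\ge0$, $\|u\|=R_{1}$.

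\emph{Points with $s=0$.} Here $u=u^{-}\in Y^{-}$. Since $F\ge0$ by $(F_{0})$, we have $\Psi\ge0$. Then Lemma~\ref{lem-3.4} and \eqref{eq-3.8} give
\[
\Phi(u)=-\tfrac12\big(\|u^{-}\|^{2}-\omega|u^{-}|_{2}^{2}\big)-\Psi(u)\le-\tfrac{a-|\omega|}{2a}\|u^{-}\|^{2}\le0,
\]
using $(\omega)$. This step is identical to Lemma~\ref{lem-3.8}.

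\emph{Points with $\|u\|=R_{1}$.} Such $u$ lie in $E_{1}\setminus\{\|u\|<R_{1}\}$. By the proof of Lemma~\ref{lem-3.12}(c), $\Phi(u)\to-\infty$ as $\|u\|\to\infty$ in $E_{1}$. Two cases were treated there: $\|u^{-}\|\to\infty$ is handled by the quadratic estimate, and $\|u^{+}\|\to\infty$ with $\|u^{-}\|$ bounded is handled via $(F_{6})$ and Fatou's lemma on a compact $\mathcal K_\Lambda$.

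The only subtlety, and the step I expect to need care, is that Lemma~\ref{lem-3.12}(c) as stated only gives $\sup\Phi(E_{1}\setminus B_{1})\le\inf\Phi(B_{\rho})$, and $\inf\Phi(B_\rho)\le\Phi(0)=0$ since $0\in B_\rho$. So the stated inequality already yields $\Phi\le0$ on $\{u\in E_{1}:\|u\|\ge R_{1}\}$, in particular on $\{\|u\|=R_{1}\}$.

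If one prefers not to rely on that form, I would instead use the divergence $\Phi\to-\infty$ on $E_{1}$ directly and enlarge $R_{1}$ so that $\sup\{\Phi(u):u\in E_{1},\|u\|\ge R_{1}\}\le0$. This keeps the conclusion of Lemma~\ref{lem-3.12}(c) valid, since the supremum over a smaller set only decreases. Combining the two parts gives $\Phi\le0$ on $\partial Q$.
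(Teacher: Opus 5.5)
Your proposal is correct and matches the paper's proof. It uses the same splitting of $\partial Q$ into $B_{R_1}\cap Y^{-}$ and the sphere part of $E_1$, the same estimate $\Phi(u)\le-\frac{a-|\omega|}{2a}\|u\|^2$ on $Y^{-}$ via \eqref{eq-3.8}, and the same appeal to Lemma~\ref{lem-3.12}(c) with $n=1$. Your remark that $\inf\Phi(B_\rho)\le\Phi(0)=0$ is all that is needed is a bit more economical than the paper's re-derivation of $\Phi\ge0$ on $B_\rho$.

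One point needs care. With $B_1=\{\|u\|\le R_1\}$ as in Lemma~\ref{lem-3.12}(c), the stated inequality literally covers only $\|u\|>R_1$, not $\|u\|\ge R_1$ as you wrote. The sphere $\|u\|=R_1$ still follows by continuity of $\Phi$, or from the choice $\|u\|\ge R_n$ made in that proof; your fallback of enlarging $R_1$ also works. The paper handles this only implicitly, by switching to the open ball.
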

	
	\begin{proof}
By definition,
\[
Q=\{u=u^-+se_1:\ u^-\in Y^-,\ s\ge 0,\ \|u\|\le R_1\}.
\]
Hence
\[
\partial Q=(B_{R_1}\cap Y^-)\ \cup\ \{u=u^-+se_1:\ u^-\in Y^-,\ s\ge 0,\ \|u\|=R_1\},
\]
and the second set is contained in $\{u\in E_1:\ \|u\|=R_1\}$ since $E_1=Y^-\oplus Y_1$ and $Y_1=\mathrm{span}\{e_1\}$.

First, if $u\in Y^-$, then $u^+=0$ and, since $F\ge 0$,
\[
\Phi(u)
=-\frac12\|u\|^2+\frac{\omega}{2}|u|_2^2-\Psi(u)
\le -\frac12\big(\|u\|^2-\omega|u|_2^2\big)
\le -\frac{a-|\omega|}{2a}\,\|u\|^2\le 0,
\]
where we used \eqref{eq-3.8} and Lemma~\ref{lem-3.4}.

Next, consider $u\in E_1$ with $\|u\|=R_1$. Set $B_1=\{v\in E_1:\ \|v\|< R_1\}$, so $u\in E_1\setminus B_1$.
From the proof of Lemma~\ref{lem-3.12}(b), there exist $\delta>0$, $p>2$ and a constant $C>0$ such that for all $v\in Y^+$,
\[
\Phi(v)\ge \delta\|v\|^2-C\|v\|^p.
\]
With the choice of $\rho>0$ made there, one has $C\rho^{p-2}\le \delta/2$, and therefore for every $v\in Y^+$ with $\|v\|\le \rho$,
\[
\Phi(v)\ge \|v\|^2\big(\delta-C\|v\|^{p-2}\big)\ge \frac{\delta}{2}\|v\|^2\ge 0.
\]
Hence $\inf\Phi(B_\rho)=0$, where $B_\rho=\{v\in Y^+:\ \|v\|\le \rho\}$.

Now Lemma~\ref{lem-3.12}(c) with $n=1$ yields
\[
\sup\Phi(E_1\setminus B_1)\le \inf\Phi(B_\rho)=0,
\]
so in particular $\Phi(u)\le 0$ for every $u\in E_1$ with $\|u\|=R_1$.

Combining the two cases, we conclude that $\Phi(u)\le 0$ for all $u\in\partial Q$.
\end{proof}

	\begin{Lem}\label{lem-3.14}
		Under the assumptions of Lemma \ref{lem-3.12}, every \((C)_{c}\)-sequence is bounded.
	\end{Lem}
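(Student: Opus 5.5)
We argue by contradiction in the spirit of Lemma~\ref{lem-3.9}, replacing the asymptotically linear bound with the superquadratic structure $(F_{7})$. Let $(u_j)$ be a $(C)_c$-sequence. As in \eqref{eq-3.18}, the identity $\Phi(u)-\frac12\langle\Phi'(u),u\rangle=\int_{\mathcal G}\hat F(x,u)\,dx$ together with $\hat F\ge0$ gives
\[
0\le\int_{\mathcal G}\hat F(x,u_j)\,dx\le C .
\]
Suppose $\|u_j\|\to\infty$ and set $w_j=u_j/\|u_j\|$. The first step is to test $\Phi'(u_j)$ against $u_j^+-u_j^-$ and divide by $\|u_j\|^2$. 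Using Lemma~\ref{lem-3.4} and $(\omega)$, this yields
\[
1-\frac{|\omega|}{a}\le o(1)+\int_{\mathcal G}\frac{|F_u(x,u_j)|}{|u_j|}\,|w_j|\,|w_j^+-w_j^-|\,dx .
\]
We then split $\mathcal G$ into $\{|u_j|<r_\varepsilon\}$ and $\{|u_j|\ge r_\varepsilon\}$, taking $r_\varepsilon\le r$ small. On the first set, $(F_2)$ bounds the contribution by $C\varepsilon$.

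The main obstacle is the second set, where $F_u$ is no longer linearly bounded. On the part with $r_\varepsilon\le|u_j|\le r$, compactness of $\mathcal K\times\{r_\varepsilon\le|u|\le r\}$, periodicity, and $\hat F>0$ give $|F_u(x,u)|/|u|\le C_\varepsilon\hat F(x,u)$. Hence this piece is bounded by
\[
C_\varepsilon\int\hat F(x,u_j)|w_j|^2\,dx\le C_\varepsilon\|u_j\|^{-2}\int\hat F(x,u_j)|u_j|^2/\|u_j\|^{0}\,dx .
\]
This is awkward as written, so instead we argue as follows. On $\{r_\varepsilon\le|u_j|\}$ we have $\hat F(x,u_j)\ge c_\varepsilon|u_j|^2$, using $(F_7)(i)$ for $|u|\ge r$ and compactness below. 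Therefore $|\{|u_j|\ge r_\varepsilon\}|\le C$ and $\int_{\{|u_j|\ge r_\varepsilon\}}|w_j|^2\le C\|u_j\|^{-2}\to0$. Since $|F_u|/|u|$ is bounded on $r_\varepsilon\le|u|\le r$, that piece tends to $0$ by Cauchy--Schwarz.

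On $\{|u_j|\ge r\}$, set $\sigma'=\sigma/(\sigma-1)$. Hölder's inequality with exponents $\sigma$ and $\sigma'$, combined with $(F_7)(ii)$, gives
\[
\int_{\{|u_j|\ge r\}}\frac{|F_u(x,u_j)|}{|u_j|}|w_j||w_j^+-w_j^-|\,dx
\le\Big(\int\hat F(x,u_j)\Big)^{1/\sigma}c_3^{1/\sigma}\Big(\int_{\{|u_j|\ge r\}}(|w_j||w_j^+-w_j^-|)^{\sigma'}\Big)^{1/\sigma'} .
\]
The first factor is bounded. For the second, if $2\sigma'>2$, the $L^{2\sigma'}$ norms are uniformly bounded via the embedding $Y\hookrightarrow L^p$ for $p\ge2$. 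Interpolating with $\int_{\{|u_j|\ge r\}}|w_j|^2\to0$, as in \eqref{eq-3.20}, applied also to $w_j^\pm$ restricted to the same set (a set of bounded measure, so Hölder in measure works), shows the factor tends to $0$.

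Combining the three pieces gives $1-|\omega|/a\le C\varepsilon+o(1)$, which contradicts $(\omega)$ after letting $j\to\infty$ and then $\varepsilon\to0$. The delicate points are the uniform bound $|\{|u_j|\ge r_\varepsilon\}|\le C$, which relies on $\hat F$ being bounded below uniformly in $x$ via periodicity, and the Hölder step with $(F_7)(ii)$. For the latter I would use the bounded measure of the set to handle the factor $w_j^+-w_j^-$, whose $L^2$ mass on the set need not be small by itself.
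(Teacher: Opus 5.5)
Your proposal is correct and follows essentially the same route as the paper's proof. Both arguments use the bound on $\int\hat F$, normalization, testing with $u_j^+-u_j^-$, a lower bound $\hat F\ge c_\rho|u|^2$ for $|u|\ge\rho$ obtained from periodicity and compactness, and H\"older with $(F_7)(ii)$ plus interpolation on the set where $|u_j|$ is large. Your three-level splitting by the size of $|u_j|$ actually handles the intermediate band $r_\varepsilon\le|u_j|\le r$ more carefully than the paper does. The paper's inclusion $J_m^c\subset\{|u_m|\ge\max\{\rho_1,r\}\}$ only yields $|u_m|>\rho_1$ and is not justified when $\rho_1<r$.

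In the write-up, delete the abandoned estimate and the bounded-measure detour. By Cauchy--Schwarz the last factor is at most $|w_j|_{L^{2\sigma'}(\{|u_j|\ge r\})}\,|w_j^+-w_j^-|_{2\sigma'}$. The first norm tends to $0$ by interpolation, and the second is bounded via $Y\hookrightarrow L^{2\sigma'}$, so no smallness of $w_j^\pm$ on that set is needed.
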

\begin{proof}
Let $(u_m)\subset Y$ be a $(C)_c$-sequence, namely
\[
\Phi(u_m)\to c,
\qquad
(1+\|u_m\|)\,\|\Phi'(u_m)\|_{Y^*}\to 0 .
\]
Recall that
\[
\Phi(u)-\frac12\langle \Phi'(u),u\rangle
=\int_{\mathcal K}\hat F(x,u)\,dx,
\qquad
\hat F(x,u)=\frac12\,F_u(x,u)\cdot u-F(x,u).
\]
By $(F_7)$ we have $\hat F(x,u)\ge 0$ for all $(x,u)$, hence there exists $C_1>0$ such that for $m$ large,
\begin{equation}\label{eq-3.28-new}
0\le \int_{\mathcal K}\hat F(x,u_m)\,dx
=\Phi(u_m)-\frac12\langle \Phi'(u_m),u_m\rangle
\le C_1 .
\end{equation}

Assume by contradiction that $\|u_m\|\to\infty$.
Introduce the equivalent norm
\[
\|u\|_\omega^2
=\|u\|^2+\omega\big(|u^+|_2^2-|u^-|_2^2\big).
\]
Set
\[
\omega_0=a-|\omega|>0.
\]
Using \eqref{eq-3.7}--\eqref{eq-3.8} and Lemma~\ref{lem-3.4}, we obtain
\begin{equation}\label{eq-3.31-new}
\omega_0|u|_2^2\le \|u\|_\omega^2,
\qquad
\frac{\omega_0}{a}\,\|u\|^2\le \|u\|_\omega^2\le \frac{a+|\omega|}{a}\,\|u\|^2
\quad\text{for all }u\in Y.
\end{equation}
In particular, $\|u_m\|_\omega\to\infty$.

Set
\[
v_m=\frac{u_m}{\|u_m\|_\omega}.
\]
Then $\|v_m\|_\omega=1$ and $(v_m)$ is bounded in $Y$. By Sobolev embeddings on $\mathcal K$,
\[
|v_m|_s\le C_s\quad\text{for all }s\in[2,\infty),
\]
with constants $C_s$ independent of $m$.

\textbf{Step 1. }
Let $r>0$ and $c_2>0$ be given by $(F_7)(i)$, so that $\hat F(x,u)\ge c_2|u|^2$ for $|u|\ge r$.
We claim that for every $\rho>0$ there exists $a_\rho>0$ such that
\begin{equation}\label{eq-3.29-new}
\hat F(x,u)\ge a_\rho |u|^2
\quad\text{for all }x\in\mathcal G,\ |u|\ge \rho.
\end{equation}
If $\rho\ge r$ we take $a_\rho=c_2$. If $0<\rho<r$, using $\mathbb Z^d$-periodicity and cocompactness we reduce $x$ to
$\mathcal K$ and consider the compact set
\[
\mathcal C_\rho=\{(x,u)\in \mathcal K\times\mathbb C^2:\ \rho\le |u|\le r\}.
\]
Since $\hat F(x,u)>0$ for $u\ne 0$ by $(F_7)$ and $\hat F$ is continuous, we have
\[
m_\rho=\min_{\mathcal C_\rho}\hat F>0.
\]
Thus for $\rho\le |u|\le r$ we have $\hat F(x,u)\ge m_\rho\ge \frac{m_\rho}{r^2}|u|^2$, while for $|u|\ge r$ we have
$\hat F(x,u)\ge c_2|u|^2$. Hence \eqref{eq-3.29-new} holds with
\[
a_\rho=\min\left\{c_2,\frac{m_\rho}{r^2}\right\}>0.
\]

\textbf{Step 2.}
For $\rho>0$ define
\[
Q_m(\rho)=\{x\in\mathcal K:\ |u_m(x)|\ge \rho\}.
\]
By \eqref{eq-3.28-new} and \eqref{eq-3.29-new},
\[
a_\rho\int_{Q_m(\rho)}|u_m|^2\,dx
\le \int_{Q_m(\rho)}\hat F(x,u_m)\,dx
\le \int_{\mathcal K}\hat F(x,u_m)\,dx
\le C_1,
\]
so
\begin{equation}\label{eq-3.30-new}
\int_{Q_m(\rho)}|u_m|^2\,dx\le \frac{C_1}{a_\rho}.
\end{equation}
Consequently,
\[
\int_{Q_m(\rho)}|v_m|^2\,dx
=\frac{1}{\|u_m\|_\omega^2}\int_{Q_m(\rho)}|u_m|^2\,dx
\le \frac{C_1}{a_\rho\,\|u_m\|_\omega^2}\to 0.
\]
Fix $s\in(2,\infty)$ and choose $\bar s\in(s,\infty)$. By Hölder interpolation we obtain
\begin{equation}\label{eq-3.32-new}
\int_{Q_m(\rho)}|v_m|^s\,dx\to 0
\quad\text{for every }s\in(2,\infty).
\end{equation}

\textbf{Step 3.}
From the expression of $\Phi'$,
\[
\langle \Phi'(u_m),u_m^+-u_m^-\rangle
=\|u_m\|_\omega^2-\int_{\mathcal K}F_u(x,u_m)\cdot (u_m^+-u_m^-)\,dx.
\]
Since $(1+\|u_m\|)\|\Phi'(u_m)\|_{Y^*}\to 0$ and $\|u_m^+-u_m^-\|\le \|u_m\|$, by \eqref{eq-3.31-new} we get
\[
\frac{\langle \Phi'(u_m),u_m^+-u_m^-\rangle}{\|u_m\|_\omega^2}=o(1).
\]
Dividing the identity above by $\|u_m\|_\omega^2$ yields
\[
1-\int_{\mathcal K}\frac{F_u(x,u_m)}{|u_m|}\cdot (v_m^+-v_m^-)\,|v_m|\,dx=o(1),
\]
where we define $\frac{F_u(x,u_m)}{|u_m|}=0$ on $\{u_m=0\}$.
Hence
\begin{equation}\label{eq-int-to-1}
\int_{\mathcal K}\frac{F_u(x,u_m)}{|u_m|}\cdot (v_m^+-v_m^-)\,|v_m|\,dx\to 1.
\end{equation}

Define
\[
J_m=\left\{x\in\mathcal K:\ \frac{|F_u(x,u_m(x))|}{|u_m(x)|}\le \frac{\omega_0}{2}\right\},
\qquad
J_m^c=\mathcal K\setminus J_m.
\]
Using Cauchy--Schwarz and \eqref{eq-3.31-new},
\[
\begin{aligned}
\left|\int_{J_m}\frac{F_u(x,u_m)}{|u_m|}\cdot (v_m^+-v_m^-)\,|v_m|\,dx\right|
&\le \frac{\omega_0}{2}\int_{J_m}|v_m|\,|v_m^+-v_m^-|\,dx \\
&\le \frac{\omega_0}{2}\,|v_m|_2\,|v_m^+-v_m^-|_2
\le \frac{\omega_0}{2}\,|v_m|_2^2
\le \frac12,
\end{aligned}
\]
because $|v_m|_2^2\le \omega_0^{-1}\|v_m\|_\omega^2=\omega_0^{-1}$.
Combining with \eqref{eq-int-to-1} we obtain
\begin{equation}\label{eq-3.34-new}
\liminf_{m\to\infty}
\int_{J_m^c}\frac{F_u(x,u_m)}{|u_m|}\cdot (v_m^+-v_m^-)\,|v_m|\,dx
\ge \frac12.
\end{equation}

\textbf{Step 4.}
By $(F_2)$ there exists $\rho_1>0$ such that
\[
|F_u(x,u)|\le \frac{\omega_0}{2}|u|
\quad\text{whenever }|u|\le \rho_1.
\]
Let $\rho=\max\{\rho_1,r\}$. Then $x\in J_m^c$ implies $|u_m(x)|\ge \rho$ and hence $(F_7)(ii)$ applies:
there exist $\sigma>1$ and $c_3>0$ such that
\begin{equation}\label{eq-3.35-new}
\left(\frac{|F_u(x,u_m)|}{|u_m|}\right)^\sigma
=\frac{|F_u(x,u_m)|^\sigma}{|u_m|^\sigma}
\le c_3\,\hat F(x,u_m)
\quad\text{on }J_m^c.
\end{equation}
Let $\sigma'=\sigma/(\sigma-1)$, so $2\sigma'\in(2,\infty)$.
Using Hölder and then \eqref{eq-3.35-new},
\[
\begin{aligned}
\int_{J_m^c}\frac{|F_u(x,u_m)|}{|u_m|}\,|v_m|\,|v_m^+-v_m^-|\,dx
&\le \left(\int_{J_m^c}\left(\frac{|F_u(x,u_m)|}{|u_m|}\right)^\sigma dx\right)^{1/\sigma}
\left(\int_{J_m^c}\big(|v_m|\,|v_m^+-v_m^-|\big)^{\sigma'}dx\right)^{1/\sigma'} \\
&\le C\left(\int_{J_m^c}\hat F(x,u_m)\,dx\right)^{1/\sigma}
\left(\int_{J_m^c}|v_m|^{2\sigma'}dx\right)^{1/(2\sigma')}
\left(\int_{J_m^c}|v_m^+-v_m^-|^{2\sigma'}dx\right)^{1/(2\sigma')}.
\end{aligned}
\]
The first factor is uniformly bounded by \eqref{eq-3.28-new}.
The third factor is uniformly bounded since $(v_m)$ is bounded in $Y$ and $Y\hookrightarrow L^{2\sigma'}(\mathcal K)$.
Moreover, $J_m^c\subset Q_m(\rho)$, hence by \eqref{eq-3.32-new} with $s=2\sigma'$,
\[
\int_{J_m^c}|v_m|^{2\sigma'}dx\le \int_{Q_m(\rho)}|v_m|^{2\sigma'}dx\to 0.
\]
Therefore the whole right-hand side tends to $0$, which contradicts \eqref{eq-3.34-new}.
This contradiction shows that $\|u_m\|_\omega$ is bounded, and hence $(u_m)$ is bounded in $Y$ by \eqref{eq-3.31-new}.
\end{proof}

	Based on Lemmas \ref{lem-3.10} and \ref{lem-3.11}, we have the following analogue
	for the present hypotheses:

    \begin{Lem}\label{lem-3.15}
Let $(\omega)$, $(F_0)$--$(F_2)$ and $(F_5)$--$(F_7)$ be satisfied.
Assume that $\Phi$ has only finitely many geometrically distinct critical points, namely
$K/\mathbb Z^d$ is finite, where
\[
K=\{u\in Y\setminus\{0\}:\ \Phi'(u)=0\}.
\]
Then for any compact interval $I=[c,d]\subset(0,\infty)$, $\Phi$ has a $(C)_I$-attractor
$\mathcal A\subset Y$ such that $P^+\mathcal A\subset Y^+$ is bounded and
\[
\inf\{\|u^+-v^+\|:\ u,v\in\mathcal A,\ u^+\ne v^+\}>0,
\]
where $P^+:Y\to Y^+$ is the orthogonal projection.
\end{Lem}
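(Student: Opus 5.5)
The plan is to transplant the argument of Lemmas~\ref{lem-3.10} and~\ref{lem-3.11} to the superquadratic setting. The asymptotically linear bound $|F_u(x,u)|\le C_0|u|$ was used there, and it is now unavailable. It will be replaced by the growth control coming from $(F_5)$ together with boundedness of $(C)_c$-sequences (Lemma~\ref{lem-3.14}). First I fix representatives $\mathcal F=-\mathcal F$ of $K/\mathbb Z^d$. As in \eqref{eq-3.22}, using $\Phi(u)=\Phi(u)-\frac12\langle\Phi'(u),u\rangle=\int_{\mathcal G}\hat F(x,u)\,dx>0$ by $(F_7)$, I choose $0<\theta\le\vartheta$ with $\theta<\min_{\mathcal F}\Phi$.

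The first key step is a splitting lemma analogous to Lemma~\ref{lem-3.10}. Let $(u_m)$ be a $(C)_c$-sequence. By Lemma~\ref{lem-3.14} it is bounded, and $c=\lim\int\hat F(x,u_m)\ge0$.

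\emph{Non-vanishing.} Suppose $u_m\not\to0$. I test $\Phi'(u_m)$ against $u_m^+-u_m^-$. From $(F_2)$ and $(F_5)$ (integrate $F_{uu}$ twice) I get $|F_u(x,u)|\le\varepsilon|u|+C_\varepsilon|u|^{q-1}$ with $q=2+\nu$. This gives
\[
\|u_m\|_\omega^2\le o(1)+\varepsilon C+C_\varepsilon|u_m|_q^{q-1}|u_m|_q .
\]
Hence the vanishing alternative of Lemma~\ref{lem:2.2}, which forces $|u_m|_q\to0$, would give $\|u_m\|\to0$. So there are translations $a_m$ with $v_m=a_m*u_m\rightharpoonup v\neq0$ and local strong convergence.

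\emph{Extracting one profile.} Weak sequential continuity of $\Phi'$ (Lemma~\ref{lem-3.12}(a)) gives $\Phi'(v)=0$, so $v\in K$. After retranslating, $v=\bar u_1\in\mathcal F$.

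\emph{Splitting the remainder.} Set $w_m=v_m-\bar u_1$. The Brezis--Lieb type splitting
\[
\Phi(v_m)=\Phi(\bar u_1)+\Phi(w_m)+o(1),\qquad \Phi'(v_m)-\Phi'(w_m)=o(1)\ \text{in }Y^*,
\]
is proved from Lemma~\ref{lemma2.1} and the mean value theorem. It uses $|F_{uu}|\le C(1+|u|^\nu)$, local strong convergence on large finite unions $\mathcal K_\Lambda$, and smallness of $\bar u_1$ in $L^2\cap L^q$ outside $\mathcal K_\Lambda$. Thus $(w_m)$ is a Cerami-type sequence at level $c-\Phi(\bar u_1)$, at least a Palais--Smale sequence, which is enough to iterate since it is bounded.

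\emph{Iteration.} Each extracted profile costs at least $\theta$ of energy, and all remainder levels stay nonnegative. Hence the process stops after $\ell\le[c/\theta]$ steps with
\[
\Big\|u_m-\sum_{i=1}^\ell a_m^i*\bar u_i\Big\|\to0,\qquad \sum_i\Phi(\bar u_i)=c .
\]
To close the last step I need that a Palais--Smale sequence at level $0$ which does not vanish converges to $0$. This follows from the non-vanishing argument above combined with the energy gap $\theta$.

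The conclusion then follows exactly as in Lemma~\ref{lem-3.11}. Set $\ell=[d/\theta]$ and $\mathcal A=[\mathcal F,\ell]$. Every $(C)_c$-sequence with $c\in I$ satisfies $\mathrm{dist}(u_m,\mathcal A)\to0$, and $\Phi(\mathcal A\text{-approximants})\to c$, so $\mathcal A$ is a $(C)_I$-attractor. Since the $\mathbb Z^d$-action commutes with $P^+$, we have $P^+\mathcal A=[P^+\mathcal F,\ell]$. This set is bounded by $\ell\max_{\mathcal F}\|\bar u^+\|$, and the separation property is \eqref{eq-3.25} applied to $\mathcal B=P^+\mathcal F$. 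If $d<\theta$ there are no $(C)_c$-sequences with $c\in I$, and the claim is vacuous.

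The main obstacle is the splitting identity for $\Phi'$, i.e. $\|\Phi'(v_m)-\Phi'(w_m)-\Phi'(\bar u_1)\|_{Y^*}\to0$, uniformly over test functions in the unit ball of $Y$. The plan is to write
\[
F_u(x,w_m+\bar u_1)-F_u(x,w_m)-F_u(x,\bar u_1)
\]
via $\int_0^1F_{uu}$, bound it by $C(|\bar u_1|+|\bar u_1|^{1+\nu})$ times terms in $w_m$, and split $\mathcal G$ into $\mathcal K_\Lambda$, where $w_m\to0$ strongly, and its complement, where $\bar u_1$ is small in $L^2\cap L^{q}$. I would then estimate the pairing with $\varphi$ by H\"older in $L^2$ and $L^q$, using $Y\hookrightarrow L^q$.
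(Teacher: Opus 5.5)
Your proposal is correct and follows the paper's own route. The shared steps are: fixed representatives $\mathcal F$ with an energy gap $\theta$; profile extraction through Lemma~\ref{lem:2.2} and weak sequential continuity of $\Phi'$; Brezis--Lieb splitting of $\Phi$ and $\Phi'$ controlled by $(F_5)$; at most $[c/\theta]$ iterations; and $\mathcal A=[\mathcal F,\ell]$, with separation from \eqref{eq-3.25} applied to $P^+\mathcal F$.

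The only deviation is how you exclude vanishing. You test $\Phi'(u_m)$ on $u_m^+-u_m^-$ under the hypothesis $u_m\not\to0$, whereas the paper derives $c=0$ from $\int\hat F(x,u_m)\,dx\to0$. Your version also covers remainders at level $0$, a case the paper passes over quickly. One small correction: the last factor in your displayed estimate should be $|u_m^+-u_m^-|_q\le C\|u_m\|$ rather than $|u_m|_q$. This does not affect the conclusion.
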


\begin{proof}
Assume $K/\mathbb Z^d$ is finite and let $\mathcal F\subset K$ be a finite set of representatives
of the $\mathbb Z^d$-orbits in $K$. Since $\Phi$ is $\mathbb Z^d$-invariant and $\Phi'$ is odd, we may assume
$\mathcal F=-\mathcal F$.

For any $u\in K$ we have $\Phi'(u)=0$, hence
\[
\Phi(u)=\Phi(u)-\frac12\langle \Phi'(u),u\rangle=\int_{\mathcal K}\hat F(x,u)\,dx,
\qquad
\hat F(x,u)=\frac12F_u(x,u)\cdot u-F(x,u).
\]
By $(F_7)$ one has $\hat F(x,u)>0$ for $u\ne 0$, hence $\Phi(u)>0$ for all $u\in K$.
Therefore there exist constants $0<\theta\le \vartheta$ such that
\begin{equation}\label{eq-3.22-super}
0<\theta<\min_{\mathcal F}\Phi=\min_K\Phi\le \max_K\Phi=\max_{\mathcal F}\Phi<\vartheta.
\end{equation}

Fix $I=[c,d]\subset(0,\infty)$ and let $[r]$ denote the integer part of $r$.
Set
\[
\ell=[d/\theta],
\qquad
\mathcal A=[\mathcal F,\ell]
=\left\{\sum_{i=1}^{j}(a_i*u_i):\ 1\le j\le \ell,\ a_i\in\mathbb Z^d,\ u_i\in\mathcal F\right\}.
\]

\smallskip
\noindent\textbf{Claim 1. $\mathcal A$ is a $(C)_I$-attractor for $\Phi$.}
Let $(u_m)\subset Y$ be a $(C)_c$-sequence with $c\in I$. By Lemma~\ref{lem-3.14}, $(u_m)$ is bounded in $Y$.
If $u_m\to 0$ in $Y$, then $\Phi(u_m)\to 0$, contradicting $c>0$. Hence $u_m\not\to 0$.

Apply Lemma~\ref{lem:2.2} to the normalized
sequence $\Psi_m=u_m/|u_m|_2$.
The vanishing alternative of Lemma~\ref{lem:2.2} is excluded as follows: if for some $R>0$,
\[
\sup_{x\in\mathcal G}\int_{B_R(x)}|\Psi_m|^2\to 0,
\]
then $\Psi_m\to 0$ in $L^p(\mathcal K)$ for all $p\in(2,\infty)$; hence $u_m\to 0$ in $L^p(\mathcal K)$.
Using $(F_2)$ and $(F_7)$, this implies $\int_{\mathcal K}F(x,u_m)\,dx\to 0$ and
$\int_{\mathcal K}\hat F(x,u_m)\,dx\to 0$, so
\[
c=\lim_{m\to\infty}\Phi(u_m)=\lim_{m\to\infty}\left(\int_{\mathcal K}\hat F(x,u_m)\,dx+\frac12\langle\Phi'(u_m),u_m\rangle\right)=0,
\]
a contradiction. Therefore the compactness modulo translations case holds.

Thus there exist $(a_m^1)\subset\mathbb Z^d$ and a nontrivial limit $\bar u_1\in Y\setminus\{0\}$ such that
$v_m^1=a_m^1*u_m\rightharpoonup \bar u_1$ in $Y$ and $v_m^1\to \bar u_1$ in $L^q_{\mathrm{loc}}(\mathcal G)$ for all $q\in[2,\infty)$.
By $\mathbb Z^d$-invariance, $(v_m^1)$ is still a $(C)_c$-sequence. Using weak sequential continuity of $\Phi'$ we obtain $\Phi'(\bar u_1)=0$, hence $\bar u_1\in K$.
By definition of $\mathcal F$, after composing with one fixed translation we may assume $\bar u_1\in\mathcal F$.

Set $w_m^1=v_m^1-\bar u_1$. Using $(F_5)$ and Lemma~\ref{lem-3.10},
we have
\[
\Phi(v_m^1)=\Phi(\bar u_1)+\Phi(w_m^1)+o(1),
\qquad
\Phi'(v_m^1)=\Phi'(w_m^1)+o(1),
\]
so $(w_m^1)$ is a $(C)_{c-\Phi(\bar u_1)}$-sequence.
If $c=\Phi(\bar u_1)$ then $w_m^1\to 0$ in $Y$ and we are done with $\ell=1$.
If $c>\Phi(\bar u_1)$, we repeat the same argument with $(w_m^1)$ in place of $(u_m)$.
Each extracted nontrivial profile $\bar u_i$ satisfies $\Phi(\bar u_i)\ge\theta$ by \eqref{eq-3.22-super},
and the translations can be chosen so that $|a_m^i-a_m^j|\to\infty$ for $i\ne j$.
Hence the procedure must terminate after at most $[c/\theta]\le [d/\theta]=\ell$ steps.

Therefore there exist $\ell_0\le \ell$, profiles $\bar u_1,\dots,\bar u_{\ell_0}\in\mathcal F$ and translations
$(a_m^i)\subset\mathbb Z^d$ such that
\[
\left\|u_m-\sum_{i=1}^{\ell_0}(a_m^i*\bar u_i)\right\|\to 0.
\]
This shows $\mathrm{dist}(u_m,\mathcal A)\to 0$, so $\mathcal A$ is a $(C)_I$-attractor.

\smallskip
\noindent\textbf{Claim 2. $P^+\mathcal A$ is bounded and uniformly separated.}
Since the $\mathbb Z^d$-action is isometric and commutes with the splitting, $P^+(a*u)=a*(P^+u)$.
Hence $P^+\mathcal A=[P^+\mathcal F,\ell]$, and for any $u\in\mathcal A$,
\[
\|u^+\|
\le \sum_{i=1}^{\ell}\|\bar u_i^+\|
\le \ell\,\max\{\|\bar u^+\|:\ \bar u\in\mathcal F\},
\]
so $P^+\mathcal A$ is bounded.

Finally, applying \eqref{eq-3.25} to the finite set $\mathcal B=P^+\mathcal F\subset Y^+$ yields
\[
\inf\{\|u^+-v^+\|:\ u,v\in\mathcal A,\ u^+\ne v^+\}
=\inf\{\|w-w'\|:\ w,w'\in[P^+\mathcal F,\ell],\ w\ne w'\}>0.
\]
The proof is complete.
\end{proof}

\noindent\textbf{Proof of Theorem \ref{theo-1.2}.}
Let $M=Y^-$ and $N=Y^+$.
By Lemma~\ref{lem-3.12}(a), $\Psi$ is weakly sequentially lower semicontinuous and $\Phi'$ is weakly sequentially continuous;
moreover, the cone condition on superlevel sets holds, hence $(\Phi_0)$ is satisfied.
Lemma~\ref{lem-3.12}(b) yields $\rho>0$ and $\eta>0$ such that
\[
\Phi(u)\ge \eta \quad\text{for all }u\in Y^+ \text{ with }\|u\|=\rho,
\]
so $(\Phi_2)$ holds.
Lemma~\ref{lem-3.12}(c) and Lemma~\ref{lem-3.13} give the linking geometry, hence $(\Phi_1)$ holds.
Finally, Lemma~\ref{lem-3.14} provides the Cerami compactness condition at all levels, hence $(\Phi_3)$ holds.
Therefore all assumptions of Theorem~\ref{theo-2.2} are fulfilled.

Consequently, there exist $c\ge \eta$ and a $(C)_c$-sequence $(u_m)\subset Y$ such that
\[
\Phi(u_m)\to c,
\qquad
(1+\|u_m\|)\,\|\Phi'(u_m)\|_{Y^*}\to 0.
\]
By Lemma~\ref{lem-3.14}, $(u_m)$ is bounded in $Y$.
Up to a subsequence, $u_m\rightharpoonup u$ in $Y$.

We claim that $u\ne 0$ after a suitable $\mathbb Z^d$-translation.
Assume by contradiction that the vanishing alternative holds in the sense of Lemma~\ref{lem:2.2}: for some $R>0$,
\[
\sup_{x\in\mathcal G}\int_{B_R(x)}|u_m(y)|^2\,dy\to 0.
\]
Then $u_m\to 0$ in $L^p(\mathcal K)$ for every $p\in(2,\infty)$, hence by $(F_2)$ and $(F_7)$,
\[
\int_{\mathcal K}F(x,u_m)\,dx\to 0,
\qquad
\int_{\mathcal K}\hat F(x,u_m)\,dx\to 0.
\]
Using the identity
\[
\Phi(u_m)-\frac12\langle \Phi'(u_m),u_m\rangle=\int_{\mathcal K}\hat F(x,u_m)\,dx,
\]
and $\langle \Phi'(u_m),u_m\rangle\to 0$ from the Cerami condition, we obtain $c=0$, contradicting $c\ge \eta>0$.
Thus vanishing is excluded.

By Lemma~\ref{lem:2.2}, there exist $(a_m)\subset\mathbb Z^d$ and $v\in Y\setminus\{0\}$ such that, setting $v_m=a_m*u_m$,
\[
v_m\rightharpoonup v \text{ in }Y,
\qquad
v_m\to v \text{ in }L^q_{\mathrm{loc}}(\mathcal G,\mathbb C^2)\ \text{for all }q\in[2,\infty).
\]
By $\mathbb Z^d$-invariance, $(v_m)$ is still a $(C)_c$-sequence, hence $\Phi'(v_m)\to 0$ in $Y^*$.
Using the weak sequential continuity of $\Phi'$ (Lemma~\ref{lem-3.12}(a)), we obtain $\Phi'(v)=0$.
Since $v\ne 0$, $v$ is a nontrivial critical point of $\Phi$.
By Proposition~\ref{pro-3.1}, $v$ yields a bound state of NLDE \eqref{eq-1.5}.
This proves the existence part of Theorem~\ref{theo-1.2}.

For the multiplicity statement, assume in addition that $\Phi$ is even, equivalently
\[
F(x,-u)=F(x,u)\quad\text{for all }(x,u)\in\mathcal G\times\mathbb C^2,
\]
and suppose by contradiction that NLDE \eqref{eq-1.5} has only finitely many geometrically distinct bound states.
Then $\Phi$ has only finitely many geometrically distinct nontrivial critical points.
By Lemma~\ref{lem-3.15}, $\Phi$ satisfies condition $(\Phi_5)$.
Together with Lemma~\ref{lem-3.12}, Lemma~\ref{lem-3.13} and Lemma~\ref{lem-3.14},
all assumptions of Theorem~\ref{theo-2.3} are satisfied.
Therefore $\Phi$ admits an unbounded sequence of positive critical values, contradicting the finiteness of
geometrically distinct critical points.
Hence NLDE \eqref{eq-1.5} possesses infinitely many geometrically distinct bound states.

\section*{Acknowledgments}
We would like to thank the anonymous referee for his/her careful readings of our manuscript and the useful comments. 

\medskip
{\bf Funding:} This work is supported by National Natural Science Foundation of China (12301145, 12261107, 12561020) and Yunnan Fundamental Research Projects (202301AU070144, 202401AU070123).

\medskip
{\bf Author Contributions:} All the authors wrote the main manuscript text together and these authors contributed equally to this work.

\medskip
{\bf Data availability:}  Data sharing is not applicable to this article as no new data were created or analyzed in this study.

\medskip
{\bf Conflict of Interests:} The author declares that there is no conflict of interest.


	\bibliographystyle{plain}
	\bibliography{reference}

\end{document}